\numberwithin{equation}{section}
\newtheorem{theorem}{Theorem}[section]
\newtheorem{proposition}[theorem]{Proposition}  
\newtheorem{lemma}[theorem]{Lemma}
\newtheoremstyle{claim} 
{1em}                        
{1em}                        
{}                           
{}                           
{\bfseries}                  
{.}                          
{.5em}                       
{}  
\theoremstyle{claim}
\theoremstyle{remark}
\newtheorem*{remark*}{Remark}
\theoremstyle{definition}
\newtheorem{definition}[theorem]{Definition}
\DeclareMathOperator{\Arg}{Arg} 
\begin{document}

\newcommand{\secref}[1]{\S\ref{#1}}
\newcommand{\figref}[1]{Figure~\ref{#1}}
\newcommand{\appref}[1]{Appendix~\ref{#1}}
\newcommand{\tabref}[1]{Table~\ref{#1}}
\newcommand{\thmref}[1]{Theorem~\ref{#1}}
\newcommand{\propref}[1]{Proposition~\ref{#1}}
\newcommand{\defref}[1]{Definition~\ref{#1}}
\newcommand{\lemref}[1]{Lemma~\ref{#1}}

\def\be{\begin{equation}}
\def\ee{\end{equation}}
\def\bear{\begin{eqnarray}}
\def\eear{\end{eqnarray}}
\def\nn{\nonumber}

\newcommand{\C}{\mathbb{C}}
\newcommand{\R}{\mathbb{R}}
\newcommand{\Z}{\mathbb{Z}}
\newcommand{\Q}{\mathbb{Q}}
\newcommand{\N}{\mathbb{N}}

\def\sZ{{s}} 
\def\sA{{s}} 
\newcommand\innerP[2]{{\left\langle{#1},{#2}\right\rangle}} 

\def\xb{{{u}}} 
\def\xc{{{w}}} 
\def\xd{{{v}}} 

\def\lBanach{{{\ell}}}

\def\matAX{{{A}}} 

\def\genFb{{\Xi}} 
\def\genFd{{\Theta}} 
\def\genFc{{\Upsilon}} 
\def\tF{{{t}}} 
\def\btF{{\overline{\tF}}} 
\def\uF{{{\rho}}} 
\def\buF{{\overline{\uF}}} 
\def\vF{{{\gamma}}} 
\def\ConstF{{{C}}} 
\def\uMF{{{\omega}}} 
\def\rMF{{{\tilde{\uMF}}}} 

\newcommand\Fs[1]{{{\mathfrak{F}_{{#1}}}}} 

\def\aPoly{{{\mathbf a}}}
\def\baPoly{{\overline{\mathbf a}}} 
\def\bPoly{{{\mathbf b}}}

\def\PolyP{{{\mathbf P}}} 
\def\PolyPid{{{\mathbf S}}} 
\def\PolyOrth{{{\mathbf Q}}} 
\def\PolyML{{{\mathbf M}}} 

\def\zA{{{\mathbf{z}}}}
\def\eqdef{{\,{\stackrel{\text{\tiny def}}{=}}\,}}
\def\eqnotreally{{\,{\stackrel{\text{?}}{=}}\,}}
\def\genFQk{{\widetilde{\Phi}}} 
\def\DilOp{{{\mathcal D}}} 

\def\Bernoulli{{{\mathbf B}}} 

\def\genFMLk{{\widetilde{\Psi}}} 

\def\xvar{{\mathbf{x}'}}

\def\nI{{{n}}} 
\def\mI{{{m}}} 
\def\jI{{{j}}} 
\def\kI{{{k}}} 
\def\lI{{{l}}} 

\def\sqt{{\alpha}} 

\def\ContourC{{{\mathcal C}}} 
\def\ContourH{{{\mathcal C_H}}} 
\def\ContourCirc{{{\widetilde{\mathcal C}}}} 
\def\ConstC{{{\mathbf C}}} 

\def\PathP{{{\mathcal P}}} 
\def\SemiC{{{\mathcal S}}} 
\def\SemiRad{{{\varepsilon}}} 
\def\betaC{{{\beta}}}

\def\xiC{{\xi}}

\def\func{{{f}}} 

\def\zC{{\mathfrak{z}}}

\def\vecxb{{{\mathbf{u}}}} 
\def\vecxd{{{\mathbf{v}}}} 
\def\vecxdt{{{\tilde{\mathbf{v}}}}} 
\def\bvecxb{{{\overline{\mathbf{u}}}}} 
\def\matPX{{{{P}}}}

\def\intXi{{\Xi}}

\def\seqxb{{\left(\xb_{\kI}\right)_{\kI=1}^\infty}} 
\def\seqxc{{\left(\xc_{\kI}\right)_{\kI=1}^\infty}} 
\def\seqxd{{\left(\xd_{\kI}\right)_{\kI=0}^\infty}} 
\def\seqxbz{{\left(\xb_{\kI}\right)_{\kI=0}^\infty}} 

\def\seqPolyOrth{{\left(\PolyOrth_\kI\right)_{\kI=0}^\infty}} 

\def\seqV{{\mathbf{V}}}

\def\genFcA{{\widetilde{\Upsilon}}} 

\def\ReuF{{\mathfrak{x}}} 
\def\RevF{{\mathfrak{y}}} 

\def\DomainD{{{\mathcal D}}}
\def\factorL{{{\ell}}}

\def\vecf{{\mathbf{f}}}
\def\veck{{\mathbf{k}}}

\def\genP{{\mathcal{P}}} 
\def\tG{{\tau}} 
\def\pG{{\sigma}} 
\def\genf{{\mathcal{F}}} 

\def\xG{{\mathbf{x}}} 
\def\yG{{\mathbf{y}}} 
\def\zG{{\mathbf{z}}} 

\def\xygeng{{\mathcal{G}}} 

\def\genFcD{{\Omega}} 

\def\matK{{{K}}} 

\def\matU{{{B}}} 
\def\matL{{{W}}} 

\def\OpWLT{{\matL}} 
\def\OpWUT{{\matU}} 

\def\tH{{{\kappa}}} 
\def\pH{{{\varpi}}} 

\def\ConstFunctionX{{\mathcal{W}}}

\def\SqP{{{\mathbf{R}}}}

\def\gM{{\nu}} 
\def\genPM{{\widetilde{\genP}}}
\def\matPXM{{\widetilde{\matPX}}}

\def\qG{{{\mathbf{q}}}}

\title{
On Pidduck polynomials \\ and zeros of the Riemann zeta function
}


\author{Ori J. Ganor}
\address{366 Physics North MC 7300, University of California, Berkeley, CA 94720, U.S.A.}
\curraddr{}
\email{ganor@berkeley.edu}
\thanks{}


\subjclass[2022]{Number Theory, Riemann Hypothesis}

\date{September 28, 2022}

\dedicatory{}

\begin{abstract}
For $1<p<\infty$, we prove that a necessary and sufficient condition for $\sZ$ to be a zero of the Riemann zeta function in the strip $0<\Re\sZ<1$ is that
$$
\begin{pmatrix}
1 & \frac{1}{3} & \frac{1}{5} & \frac{1}{7} &  \frac{1}{9} & \cdots \\
 & & & & & \\
-\frac{\sA}{3} &  1 & \frac{1}{3} & \frac{1}{5} & \frac{1}{7} & \cdots \\
 & & & & & \\
-\frac{\sA}{5} & -\frac{\sA}{5}  &  1 & \frac{1}{3} & \frac{1}{5} & \cdots \\
 & & & & & \\
-\frac{\sA}{7} &-\frac{\sA}{7}  & -\frac{\sA}{7}  &  1 &  \frac{1}{3} & \cdots \\
 & & & & & \\
-\frac{\sA}{9} & -\frac{\sA}{9} & -\frac{\sA}{9} & -\frac{\sA}{9} &  1 & \cdots \\
\vdots &\vdots & \vdots &\vdots & \vdots & \ddots\\
\end{pmatrix}
\begin{pmatrix}
\xd_0 \\ \\ \xd_1 \\ \\ \xd_2 \\ \\ \xd_3 \\ \\ \vdots \\  \vdots \\
\end{pmatrix}
=0
$$
has a nontrivial solution $\seqxd\in\lBanach^p$.
A similar matrix equation was discovered by K.~M.~Ball in 2017, but the current paper offers a different (and independent) perspective. In this paper an explicit formula for $\xd_\kI$ is constructed in terms of Pidduck polynomials. In the process, it is also shown that Pidduck polynomials form an orthogonal basis with respect to an inner product of polynomials $f,g$ whereby we replace in a formal expression ``$\sum_{n=1}^\infty (-1)^{n+1}n \overline{f(n^2)} g(n^2)$'' the divergent sums ``$\sum_{n=1}^\infty (-1)^{n+1}n^{1+2k}$'' with their zeta-function regularized values. We also discuss the modification for possible non-simple zeros and conclude with applications to the question of the simplicity of the zeros and a relation to the Hilbert-P\'olya program.

\end{abstract}

\maketitle


\section{Introduction}

The Hilbert-P\'olya approach to the Riemann Hypothesis is a quest for an anti-Hermitian operator whose eigenvalues $\lambda$ correspond to zeros $\sZ=\lambda+\frac{1}{2}$ of the Riemann zeta function $\zeta(\sZ)$. Berry and Keating \cite{BerryKeating1,BerryKeating2} matched the asymptotic distribution of eigenvalues of the requisite operator (if it exists) with a semiclassical calculation of the phase-space area (with an appropriate cut-off) for the dilatation operator $x\frac{d}{dx}$, and Bender, Brody and M\"uller \cite{Bender:2016wob} constructed an explicit operator, albeit not anti-Hermitian, with the requisite spectrum. Operators related to dilatation, acting on various spaces of functions, were also proposed in \cite{Berry:1986,Connes:1998,Meyer:2005,Sierra:2011tb,Sierra:2016rgn,Bishop:2018mgy}.
(See \cite{Schumayer:2011yp,Sands:2022} for a recent review of additional related approaches.)
Connections between the Riemann Hypothesis and hermiticity or unitarity in higher dimensional quantum systems have also recently been proposed in \cite{Remmen:2021zmc,Bianchi:2022mhs,Benjamin:2022pnx}, and a connection between Robin's criterion and bounds on multiplicities of states in certain gauge theories was suggested in \cite{Honda:2022hvy}.)

The goal of this paper is to present an alternative, perhaps simpler, linear criterion for $\sA$ to be a zero of $\zeta(\sA)$ in terms of existence of a solution to a sequence of linear equations in the Banach space $\lBanach^p$ (for any $1< p<\infty$).

We will show that for fixed $p>1$ (and $p<\infty$), if $\Re\sA>0$ then $\zeta(\sA)=0$ if and only if
\be\label{eqn:start}
\begin{pmatrix}
1 & \frac{1}{3} & \frac{1}{5} & \frac{1}{7} &  \frac{1}{9} & \frac{1}{11} &\cdots \\
 & & & & & & \\
-\frac{\sA}{3} &  1 & \frac{1}{3} & \frac{1}{5} & \frac{1}{7} & \frac{1}{9} &\cdots \\
 & & & & & & \\
-\frac{\sA}{5} & -\frac{\sA}{5}  &  1 & \frac{1}{3} & \frac{1}{5} & \frac{1}{7} &\cdots \\
 & & & & & & \\
-\frac{\sA}{7} &-\frac{\sA}{7}  & -\frac{\sA}{7}  &  1 &  \frac{1}{3} & \frac{1}{5} &\cdots \\
 & & & & & & \\
-\frac{\sA}{9} & -\frac{\sA}{9} & -\frac{\sA}{9} & -\frac{\sA}{9} &  1 & \frac{1}{3} &\cdots \\
 & & & & & & \\
-\frac{\sA}{11} & -\frac{\sA}{11} & -\frac{\sA}{11} & -\frac{\sA}{11} &  -\frac{\sA}{11} &  1  &\cdots \\
\vdots & \vdots &\vdots & \vdots &\vdots & \vdots & \ddots\\
\end{pmatrix}
\begin{pmatrix}
\xd_0 \\ \\ \xd_1 \\ \\ \xd_2 \\ \\ \xd_3 \\ \\ \xd_4 \\ \\ \vdots \\  \vdots \\
\end{pmatrix}
=0
\ee
has a nontrivial solution $\seqxd\in\lBanach^p$.

One way to arrive at \eqref{eqn:start} is through a construction of a series of orthogonal polynomials with respect to the inner product $\innerP{\PolyP_1}{\PolyP_2}$ defined by a regularized version of $\sum_{n=1}^\infty(-1)^n n\PolyP_1(n^2)\PolyP_2(n^2)$, whereby the divergent sum has to be expanded in terms of the formal sums $\sum_{n=1}^\infty (-1)^n n^{2k+1}$ (for $k\in\mathbb{N}$) and then those sums have to be replaced by the analytic continuation of $\sum_{n=1}^\infty (-1)^n n^{-\sA}$ at $\sA=-2k-1$. (Such a regularization is ubiquitous in String Theory. See for instance \cite{Sonnenschein:2018aqf,Sonnenschein:2020jbe} for a recent application.) It turns out that these orthogonal polynomials can be expressed in terms of Pidduck polynomials, which are closely related to Mittag-Leffler polynomials. (Coefficients of Mittag-Leffler polynomials also appear in a recently discovered series expansion for the zeta function \cite{Rzadkowski:2012}.) The linear criterion for $\sZ$ to be a zero of $\zeta(\sZ)$ arises by using the inner product to formally express $n^{-(\sZ+1)/2}$ in the basis of the orthogonal polynomials and then requiring it to be an eigenfunction of the dilatation operator. We will present the heuristic argument in more details below, but also subsequently provide a formal proof. To pursue that route, it will be convenient to recast the main result \eqref{eqn:start} in a different way, stated below.

\subsection{Main result}

Let $\zeta(\sA)$ be the Riemann zeta-function. We will prove the following necessary and sufficient condition for $\sA$ to be a zero. [The condition is easily shown to be equivalent to \eqref{eqn:start}, but is more convenient to work with.]

\begin{theorem}[]\label{thm:main}
Suppose $\zeta(\sA)=0$ and $\sA$ is not an even negative integer. Then, there exists an infinite sequence $\seqxb$ with $\xb_k\in\C$ and
\begin{itemize}
\item
$\xb_1\neq 0$, 
\item
$|\xb_{\kI+1}-\xb_\kI|=O(\frac{1}{\kI}\log\kI)$ as $\kI\rightarrow\infty$,
\end{itemize}
and such that the following matrix identity holds:
\be\label{eqn:OpW}
\begin{pmatrix}
\frac{1}{1\cdot 3} & \frac{1}{3\cdot 5} & \frac{1}{5\cdot 7} & \frac{1}{7\cdot 9} & \cdots \\
 & & & & \\
-\frac{\sA}{2\cdot 3}-\frac{1}{2} &  \frac{1}{1\cdot 3} & \frac{1}{3\cdot 5} & \frac{1}{5\cdot 7} & \cdots \\
 & & & & \\
0 & -\frac{\sA}{2\cdot 5}-\frac{1}{2}  &  \frac{1}{1\cdot 3} & \frac{1}{3\cdot 5} & \cdots \\
 & & & & \\
0 &0  & -\frac{\sA}{2\cdot 7}-\frac{1}{2}  &  \frac{1}{1\cdot 3} & \cdots \\
 & & & & \\
0 & 0 & 0 & -\frac{\sA}{2\cdot 9}-\frac{1}{2} & \cdots \\
 & & & & \\
\vdots &\vdots & \vdots &\vdots & \ddots\\
\end{pmatrix}
\begin{pmatrix}
\xb_1 \\ \\ \xb_2 \\ \\ \xb_3 \\ \\ \xb_4 \\ \\ \vdots \\ \\ \vdots \\
\end{pmatrix}
=0,
\ee
where \eqref{eqn:OpW} is to be read as a sequence of convergent series:
\be\label{eqn:OpWseq}
-\frac{1}{2}\left(\frac{s}{2\kI-1}+1\right)\xb_{\kI-1}
+\sum_{n=0}^\infty\frac{\xb_{\kI+n}}{(2n+1)(2n+3)}=0,
\ee
for $k=1,2,\dots$, with $\xb_0\eqdef 0$.


Conversely, if $0<\Re\sA<1$, and there exists a sequence $\seqxb$ with $\xb_1\neq 0$ that satisfies \eqref{eqn:OpW}, and the sequence of differences
\be\label{eqn:xdDef}
\xd_\kI\eqdef\xb_{\kI+1}-\xb_\kI,\qquad\kI=1,2,\dots,\qquad(\xb_0\eqdef 0),
\ee
satisfies $\seqxd\in\lBanach^p$ for some $1< p<\infty$ (i.e., $\sum_{\kI=0}^\infty|\xd_\kI|^p<\infty$), then $\zeta(\sA)=0$.
\end{theorem}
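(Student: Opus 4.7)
The plan is to interpret the recurrence \eqref{eqn:OpWseq} as the eigenvalue equation of the dilation operator $x\frac{d}{dx}$ acting on $\phi_s(x) = x^{-(s+1)/4}$ (so that $\phi_s(n^2)=n^{-(s+1)/2}$), expressed in the Pidduck polynomial basis $\{P_k\}_{k\ge 0}$ with respect to the zeta-regularized pairing described in the introduction.

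\emph{Forward direction.} Assuming $\zeta(s)=0$ with $s$ not an even negative integer, I would define
$$
u_{k+1}\;\eqdef\;\frac{1}{\innerP{P_k}{P_k}}\;\mathrm{Reg}\sum_{n=1}^{\infty}(-1)^{n+1}\,n\,n^{-(s+1)/2}\,P_k(n^2),
$$
expanding $P_k(n^2)$ as a polynomial in $n^2$ and replacing each divergent partial sum by the corresponding value of the Dirichlet eta function. First, I would check that $u_1 = \eta((s-1)/2)/\innerP{P_0}{P_0}$ is nonzero in the critical strip: this follows because $\zeta$ has no zeros in $-\tfrac12<\Re z<0$ and $1-2^{(3-s)/2}$ is nonzero there. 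Second, I would derive \eqref{eqn:OpWseq} by pairing $P_k$ against the identity $(x\frac{d}{dx}+\frac{s+1}{4})\phi_s=0$: the identity $\int_0^1(1-y^2)y^{2n}dy = \frac{2}{(2n+1)(2n+3)}$, which is dual to a Pidduck generating-function identity, produces the upper-triangular entries, while the three-term recurrence of the Pidduck polynomials produces the sub-diagonal entry $-\frac{s}{2(2k-1)}-\frac12$. The hypothesis $\zeta(s)=0$ is precisely what annihilates the lone non-vanishing boundary term encountered when the formal manipulation is justified by deforming the defining series onto a Hankel-type contour encircling the positive real axis. Third, the asymptotic $|u_{k+1}-u_k|=O(\log k / k)$ would be obtained by a steepest-descent analysis of the contour-integral representation for $u_k$, using Plancherel--Rotach-type asymptotics for $P_k$ at the saddle; the constant leading term of $u_k$ cancels in the difference $u_{k+1}-u_k$, leaving the subleading $\log k / k$ correction. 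This bound places $(v_k)=(u_{k+1}-u_k)$ in $\ell^p$ for every $p>1$.

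\emph{Converse direction.} Conversely, assume $0<\Re s<1$ and that a sequence $(u_k)$ with $u_1\ne 0$ satisfies \eqref{eqn:OpWseq} and $v_k=u_{k+1}-u_k\in\ell^p$. I would perform Abel summation in \eqref{eqn:OpWseq} to rewrite it entirely in terms of $v$, and then exploit the $\ell^p$ hypothesis to show that the dual series $F(x):=\sum_{k\ge 0}v_k\,\widetilde{\Phi}_k(x)$ (with $\widetilde{\Phi}_k$ the basis dual to the Pidduck polynomials and adapted to $\ell^p$) converges on the positive half-line and satisfies $\bigl(x\frac{d}{dx}+\frac{s+1}{4}\bigr)F=0$ in the weak sense. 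The only solutions are multiples of $x^{-(s+1)/4}$, so $F=c\,\phi_s$ for some $c\ne 0$. Re-inserting this identification into the regularized pairing that defines $(u_k)$, interpreted along the same Hankel contour, produces a single analytic identity that, combined with $u_1\ne 0$, forces $(1-2^{1-s})\zeta(s)=0$; since $1-2^{1-s}$ is nonzero in $0<\Re s<1$, we conclude $\zeta(s)=0$.

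\emph{Main obstacle.} The hardest step is the rigorous handling of the zeta-regularized pairing. Deriving \eqref{eqn:OpWseq} formally is straightforward, but justifying it via a contour representation and identifying exactly the boundary term whose vanishing is equivalent to $\zeta(s)=0$ is delicate. Equally technical is the asymptotic $|u_{k+1}-u_k|=O(\log k/k)$: a coarser bound would fail to place $(v_k)$ in $\ell^p$ for all $p>1$, and the sharp $\log k/k$ decay emerges only from a careful Plancherel--Rotach-type analysis of Pidduck polynomials at the arguments $n^2$. In the converse direction, the main issue is to show that the $\ell^p$ hypothesis alone (without stronger decay) suffices to identify the reconstructed $F$ unambiguously with $\phi_s$ and to run the Hankel-contour argument in reverse.
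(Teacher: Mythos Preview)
Your forward-direction strategy matches the paper's, but the setup has errors. The correct eigenfunction is $x^{-(s+1)/2}$, not $x^{-(s+1)/4}$: under the pairing $\langle\cdot,\cdot\rangle$ (which carries a weight of $n$ when written as a formal sum over $n$), pairing $x^{-(s+1)/2}$ with $x^j$ yields $\eta(s-2j)$, not $\eta\bigl((s-1)/2-2j\bigr)$. Also, the paper sets $u_k=\mathfrak{F}_s(Q_k)$ \emph{without} dividing by $\langle Q_k,Q_k\rangle$; with your normalization the entries of \eqref{eqn:OpW} would change. With the correct definition one has $u_0=\eta(s)$, and $\zeta(s)=0$ enters simply as $u_0=0$ (not as a boundary term from a contour deformation); then $u_1=2\eta(s-2)\ne0$ because $\zeta(s-2)\ne0$ for $\Re(s-2)\in(-2,-1)$ and $1-2^{3-s}\ne0$ there. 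Your Plancherel--Rotach route for the $O(\log k/k)$ bound is plausible but not what the paper does: it writes the generating function of $(v_k)$ as an integral against $x^{-s}/\sinh x$, extracts the $k$th coefficient by a Cauchy integral, deforms onto Hankel contours around the cuts of $\bigl(\tfrac{1+\alpha}{1-\alpha}\bigr)^{ix/\pi}$, and integrates by parts once in an auxiliary variable. No saddle-point analysis enters.

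The real gap is the converse. Your plan is to reconstruct $F=\sum_k v_k\widetilde{\Phi}_k$ and identify it with a multiple of $\phi_s$ via uniqueness for the dilation ODE. Two problems: the natural dual basis is $Q_k/\|Q_k\|^2$, but $Q_k(x)$ grows in $k$ for each fixed $x>0$, so convergence of $\sum v_kQ_k(x)$ from $v\in\ell^p$ alone is unclear; and the uniqueness step requires a function space in which $x^{-(s+1)/2}$ lives and in which \eqref{eqn:OpWseq} actually characterizes solutions---you have not specified one. The paper takes a different, more concrete route. Assuming $\zeta(s)\ne0$, it builds a \emph{left} vector $(w_k)$ solving the dual (finite) recursion
\[
\delta_{k1}=-\Bigl(\tfrac{s}{2(2k+1)}+\tfrac12\Bigr)w_{k+1}+\sum_{n=0}^{k-1}\frac{w_{k-n}}{(2n+1)(2n+3)}
\]
with $w_k=O(1/k)$. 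The generating function $\Upsilon(t)=\sum_k w_k t^{2k-1}/(2k-1)$ satisfies a first-order linear ODE, whose solution behaves well as $t\to1$ only for one value of $w_1$; that value has $\eta(s)$ in its denominator, and this is exactly where the hypothesis $\zeta(s)\ne0$ is spent. Once $(w_k)\in\ell^q$ is in hand, one pairs it against the rows of \eqref{eqn:OpWseq} (rewritten in terms of $v$); Hardy's inequality and H\"older justify interchanging the sums, and the computation collapses to $u_1=0$, a contradiction. You should replace the function-reconstruction argument with this dual-sequence construction.
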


\subsection{Sketch of proof}

The necessary condition for $\sA$ to be a nontrivial zero is shown in \secref{sec:necpf}, and the sufficient condition is shown in \secref{sec:sufpf}.
The general idea for the proof of the sufficient condition is to find a sequence $\seqxc$, with $\xc_\kI\in\C$, such that
\bear
\lefteqn{
\begin{pmatrix}
1 & 0 & 0 & 0 & 0 & \cdots \\
\end{pmatrix} =
}\nn\\ &&
\begin{pmatrix}
\xc_1 & \xc_2 & \xc_3 & \xc_4 & \xc_5 & \cdots \\
\end{pmatrix}
\begin{pmatrix}
\frac{1}{1\cdot 3} & \frac{1}{3\cdot 5} & \frac{1}{5\cdot 7} & \frac{1}{7\cdot 9} & \cdots \\
 & & & & \\
-\frac{\sA}{2\cdot 3}-\frac{1}{2} &  \frac{1}{1\cdot 3} & \frac{1}{3\cdot 5} & \frac{1}{5\cdot 7} & \cdots \\
 & & & & \\
0 & -\frac{\sA}{2\cdot 5}-\frac{1}{2}  &  \frac{1}{1\cdot 3} & \frac{1}{3\cdot 5} & \cdots \\
 & & & & \\
0 &0  & -\frac{\sA}{2\cdot 7}-\frac{1}{2}  &  \frac{1}{1\cdot 3} & \cdots \\
 & & & & \\
0 & 0 & 0 & -\frac{\sA}{2\cdot 9}-\frac{1}{2} & \cdots \\
 & & & & \\
\vdots &\vdots & \vdots &\vdots & \ddots\\
\end{pmatrix}.
\nn\\ &&
\label{eqn:xcW0}
\eear
Then, if $|\xc_\kI|$ falls off fast enough as $\kI\rightarrow\infty$, we can combine \eqref{eqn:xcW0} with \eqref{eqn:OpW} to argue that $\xb_1=0$. We will see that precisely if $\zeta(\sA)\neq 0$, a solution with the required large $\kI$ behavior does indeed exist. The technical details are left for \secref{sec:sufpf}.

As for the necessary condition for $\sA$ to be a zero of $\zeta$, in \secref{sec:necpf} we will explicitly construct $\seqxb$ with expressions for the $\xb_\kI$'s taking the form
$$
\xb_\kI=\sum_{j=0}^\kI\aPoly_{\kI,j}(1-2^{1+2j-\sA})\zeta(\sA-2j),\qquad
\kI=0,1,\dots,
$$
where the $\aPoly_{\kI,j}$'s are rational coefficients of a series of polynomials 
$$
\PolyOrth_\kI(x)\eqdef\sum_{j=0}^\kI\aPoly_{\kI,j} x^j
$$
to be defined shortly. The general motivation for this construction is presented next, while the formal proof is deferred to \secref{sec:necpf}.

\subsection{Motivation}

The $\xb_\kI$'s that solve \eqref{eqn:OpW} are defined as follows. First, instead of the Riemann zeta function, it is more convenient to work with the Dirichlet eta function,
\be\label{eqn:DirichletEta}
\eta(\sA) = (1-2^{1-\sA})\zeta(\sA),
\ee
and zeros of $\zeta(\sA)$ correspond to zeros of $\eta(\sA)$.
For $\Re\sA>0$, we have  $\eta(\sA) = \sum_{n=1}^\infty\frac{(-1)^{n+1}}{n^\sA}$.
Next, for $\sA\in\C$ that is not a positive odd integer, define (the umbral operator) $\Fs{\sA}:\C[x]\rightarrow\C$ by
$$
\Fs{\sA}(\PolyP)\eqdef\sum_{j=0}^n\aPoly_j\eta(\sA-2j)
\qquad
\text{for a polynomial $\PolyP(x)=\sum_{j=0}^n \aPoly_j x^j$.}
$$
$\Fs{\sA}(\PolyP)$ can be regarded as a regularized version of the formal, divergent, expression ``$\sum_{n=0}^\infty (-1)^{n+1}n^{-\sA}\PolyP(n^2)$'', whereby we replace the formal sum ``$\sum(-1)^{n+1}n^{2j-\sA}$'' [that would appear when expanding ``$\sum_{n=0}^\infty (-1)^{n+1}n^{-\sA}\PolyP(n^2)$''] with $\eta(\sA-2j)$.
We then define an inner product on $\C[x]$ by
\be\label{eqn:innerPDef}
\innerP{\PolyP_1}{\PolyP_2}\eqdef \Fs{-1}(\PolyP_1\overline{\PolyP}_2),\qquad
\text{for $\PolyP_1,\PolyP_2\in\C[x]$.}
\ee
This turns out to be a positive definite inner product, as will be shown in \secref{sec:innerp}.
Next, we identify a series of orthogonal polynomials $(\PolyOrth_\kI)_{\kI=0}^\infty$ with respect to $\innerP{\cdot}{\cdot}$, such that $\PolyOrth_\kI(0)=1$ and $\kI=\deg\PolyOrth_\kI$ for $\kI=0,1,2,\dots$. We will show that for $\sA$ a nontrivial zero of the zeta function, the series defined by 
\be\label{eqn:xbkdef}
\xb_\kI\eqdef\Fs{\sA}(\PolyOrth_\kI),\qquad\text{for $\kI=1,2,\dots$}
\ee
satisfies \eqref{eqn:OpW}. Note that if $\sA$ is an even negative integer, $\xb_\kI=0$ for all $\kI=1,2,\dots$, hence the requirement in \thmref{thm:main} for $\sA$ to be a nontrivial zero.

The proof of \thmref{thm:main} uses the generating function for $(\PolyOrth_\kI)_{\kI=0}^\infty$, which turns out to be
\be\label{eqn:genFQkDef}
\genFQk(x,\tF)\eqdef
\sum_{\kI=0}^\infty\PolyOrth_\kI(x)\tF^\kI
=\frac{1}{(1-\tF)}\cosh\left\lbrack
\sqrt{x}\log\left(\frac{1+\sqrt{\tF}}{1-\sqrt{\tF}}\right)
\right\rbrack.
\ee
Note that, despite the appearance of $\sqrt{x}$ and $\sqrt{\tF}$ in \eqref{eqn:genFQkDef}, $\genFQk(x,\tF)$ is a power series in $\Q[x][[\tF]]$, i.e., with only integer powers of $x$ and $\tF$, and rational coefficients.

The heuristic motivation for \eqref{eqn:OpW} is that \eqref{eqn:xbkdef} can be regarded (informally!) as an expansion of the noninteger power $x^{-(\sA+1)/2}$ in the basis $(\PolyOrth_\kI)_{\kI=0}^\infty$. [See \eqref{eqn:xbkdefAlt} below.] The function $x^{-(\sA+1)/2}$ is an eigenfunction of the dilatation operator
\be\label{eqn:dilatOp}
\DilOp\eqdef x\frac{d}{dx}
\ee
and equation \eqref{eqn:OpW}, as we will see, represents that in the basis $(\PolyOrth_\kI)_{\kI=0}^\infty$. In \secref{sec:necpf} we will present a formal proof using the generating function \eqref{eqn:genFQkDef}.

To obtain \eqref{eqn:start} from \eqref{eqn:OpW}, we note that for $N>0$ we have
\bear
\lefteqn{
\sum_{n=0}^N\frac{\xb_{\kI+n}}{(2n+1)(2n+3)} =
\frac{1}{2}\sum_{n=0}^N\left(\frac{1}{2n+1}-\frac{1}{2n+3}\right)\xb_{\kI+n}
}\nn\\
&=&
\frac{1}{2}\xb_\kI
+\frac{1}{2}\sum_{n=1}^N\frac{1}{2n+1}\left(\xb_{\kI+n}-\xb_{\kI+n-1}\right)
-\frac{\xb_{\kI+N}}{2(2N+3)}\,,
\nn
\eear
and since by \eqref{eqn:xdDef} and \thmref{thm:main},
\be\label{eqn:xbfromxd}
\xb_\kI=\sum_{\nI=0}^{\kI-1}\xd_\nI=O([\log\kI]^2),
\ee
we can take the limit $N\rightarrow\infty$ and rewrite \eqref{eqn:OpWseq} as
\bear
\qquad0&=&
-\frac{\sA}{2(2\kI-1)}\xb_{\kI-1}
+\frac{1}{2}\sum_{n=0}^\infty\frac{\xd_{\kI+n-1}}{2n+1}
\label{eqn:OpWxd}\\
&=&
-\frac{\sA}{2(2\kI-1)}\sum_{\nI=0}^{\kI-2}\xd_{\nI}
+\frac{1}{2}\sum_{n=0}^\infty\frac{\xd_{\kI+n-1}}{2n+1}
\,,\qquad
\kI=1,2,3,\dots
\label{eqn:OpWxdnob}
\eear
The expression \eqref{eqn:OpWxdnob} can be visualized in matrix form as \eqref{eqn:start}.
The expression \eqref{eqn:OpWxd} will be useful in \secref{sec:sufpf}.

\subsection{Paper structure}

The paper is organized as follows. 
We begin in \secref{sec:innerp} with basic properties of the inner product \eqref{eqn:innerPDef}, including the proof that it is positive definite. In \secref{sec:OrthogonalPolynomials} we study the orthogonal polynomials with respect to that inner product, and we derive the generating function \eqref{eqn:genFQkDef}, and hence the relation with Pidduck and Mittag-Leffler polynomials. At the end of that section we derive the matrix elements of the dilatation operator, which correspond to the matrix elements of the matrix that appears in \eqref{eqn:OpW}.
\secref{sec:necpf}-\secref{sec:sufpf} present the proof of the main theorem.
The bulk of these sections is a technical analysis of the growth rate of the coefficients $\xb_\kI$ and $\xb_{\kI+1}-\xb_{\kI}$, as well as the coefficients $\xc_\kI$ from \eqref{eqn:xcW0}. This analysis is crucial for the convergence of the series \eqref{eqn:OpWseq}, as well as the validity of certain steps later on, where the order of sums are interchanged. In \secref{sec:NonSimpleZeros} we present a characterization of (potential) zeros $\sZ$ of higher order, as a condition to be added to \eqref{eqn:start}. In \secref{sec:HilbertPolya}, we consider \eqref{eqn:start} in connection with the Hilbert-P\'olya program. We attempt to recast \eqref{eqn:start} as a spectral problem for a self-adjoint operator -- an attempt that (not surprisingly) fails, albeit in a somewhat technical way. In the process, we discover an additional infinite sequence of constant ($\sZ$-independent) functionals that annihilate $\seqxd\in\lBanach^p$. In principle, they could be added to \eqref{eqn:start}, although we don't have a form as simple as \eqref{eqn:start} for those other functionals. [See \eqref{eqn:SqPGenFun} and \eqref{eqn:sumSqPxd}.] We conclude in \secref{sec:Discussion} with a summary and suggestions for further study.

\subsection*{Note added}
After this work was completed, the author was informed of previous work by Ball \cite{Ball:2017,Ball:2019} that arrived at a very similar result by proving a rational approximation to $\zeta(\sA)$ whose numerator is a determinant of a matrix similar to a truncated version of the one appearing in \eqref{eqn:start}.


\section{The inner product}\label{sec:innerp}

Let $\PolyP_1(x)=\sum_{j=0}^n \aPoly_j x^j$ and $\PolyP_2(x)=\sum_{j=0}^m \bPoly_j x^j$ be polynomials in $\C[x]$. Equation \eqref{eqn:innerPDef} defines an inner product
\be\label{eqn:innerPrepeat}
\innerP{\PolyP_1}{\PolyP_2}
=\sum_{j=0}^n\sum_{k=0}^m\eta(-1-2j-2k)\aPoly_j\overline{\bPoly}_k
=\sum_{k=0}^{n+m}
\frac{(4^{1+k}-1)\Bernoulli_{2k+2}}{2k+2}\left(\sum_{j=0}^k\aPoly_j\overline{\bPoly}_{k-j}\right)
\ee
where $\Bernoulli_n$ is a Bernoulli number.

Let
$$
\genFQk_1(x,\tF)=\sum_{k=0}^\infty\PolyP_{1,k}(x)\tF^k,\qquad
\genFQk_2(x,\tF)=\sum_{k=0}^\infty\PolyP_{2,k}(x)\tF^k
$$
be formal power series in $\tF$ with coefficients that are polynomials in $x$. We define
$$
\innerP{\genFQk_1(x,\tF_1)}{\genFQk_2(x,\tF_2)}\eqdef
\sum_{k,m=0}^\infty\innerP{\PolyP_{1,k}}{\PolyP_{1,m}}\tF_1^k\btF_2^m
\in\C[[\tF_1,\btF_2]].
$$

\begin{lemma}
Viewing $\cosh(\sqrt{x}\tF)=\sum_{n=0}^\infty\frac{1}{(2n)!}x^n\tF^{2n}$ as a formal power series in $\C[x][[\tF]]$, we have the identity
\be\label{eqn:innerPcoshcosh}
\innerP{\cosh(\sqrt{x}\tF_1)}{\cosh(\sqrt{x}\tF_2)}=
\frac{1+\cosh\tF_1\cosh\btF_2}{(\cosh\tF_1+\cosh\btF_2)^2}
\ee
\end{lemma}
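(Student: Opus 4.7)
The plan is to reduce the two-variable identity to a single-variable closed-form computation via a product-to-sum identity, then evaluate the single-variable case through a classical Bernoulli generating function, and finally to simplify the result with elementary hyperbolic identities.

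First I would observe that the extension of $\innerP{\cdot}{\cdot}$ to formal power series, applied to $\cosh(\sqrt{x}\tF) = \sum_{n\geq 0}\frac{\tF^{2n}}{(2n)!}x^n$ whose coefficients in $x$ are real multiples of powers of $\tF$, amounts to $\innerP{\cosh(\sqrt{x}\tF_1)}{\cosh(\sqrt{x}\tF_2)} = \Fs{-1}\bigl(\cosh(\sqrt{x}\tF_1)\cosh(\sqrt{x}\btF_2)\bigr)$. Multiplying the two series and collecting powers of $x$ via the binomial identity $\sum_{j=0}^n\binom{2n}{2j}\tF_1^{2j}\btF_2^{2(n-j)} = \tfrac{1}{2}[(\tF_1+\btF_2)^{2n}+(\tF_1-\btF_2)^{2n}]$ gives the formal power series identity
\[
\cosh(\sqrt{x}\tF_1)\cosh(\sqrt{x}\btF_2) = \tfrac{1}{2}\bigl[\cosh(\sqrt{x}(\tF_1+\btF_2))+\cosh(\sqrt{x}(\tF_1-\btF_2))\bigr]
\]
in $\C[x][[\tF_1,\btF_2]]$. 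By linearity of $\Fs{-1}$, the proof then reduces to finding a closed form for the single-variable quantity $F(\tau) \eqdef \Fs{-1}(\cosh(\sqrt{x}\tau))$.

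To evaluate $F(\tau) = \sum_{n\geq 0}\frac{\eta(-1-2n)}{(2n)!}\tau^{2n}$, I would rewrite $\eta(-1-2n) = \frac{(4^{n+1}-1)B_{2n+2}}{2n+2}$ using \eqref{eqn:DirichletEta} together with $\zeta(1-2m) = -B_{2m}/(2m)$, reindex with $m=n+1$ to obtain $F(\tau) = \tau^{-2}\sum_{m\geq 1}\frac{(2m-1)(4^m-1)B_{2m}\tau^{2m}}{(2m)!}$, and split this into two series. Starting from the classical Bernoulli generating function $\sum_{m\geq 1}\frac{B_{2m}t^{2m}}{(2m)!} = \tfrac{t}{2}\coth(t/2) - 1$ and applying the operator $t\partial_t - 1$ yields $\sum_{m\geq 1}\frac{(2m-1)B_{2m}t^{2m}}{(2m)!} = 1 - \tfrac{t^2}{4\sinh^2(t/2)}$; evaluating this at $t=\tau$ and at $t=2\tau$ and subtracting, the hyperbolic algebra collapses through $\sinh\tau = 2\sinh(\tau/2)\cosh(\tau/2)$ to $F(\tau) = \tfrac{1}{4\cosh^2(\tau/2)}$. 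Substituting back with $a=\tF_1+\btF_2$, $b=\tF_1-\btF_2$ and applying the identities $\cosh^2(x/2) = (1+\cosh x)/2$, $\cosh a + \cosh b = 2\cosh\tF_1\cosh\btF_2$, and $\cosh(a/2)\cosh(b/2) = \tfrac{1}{2}(\cosh\tF_1+\cosh\btF_2)$ to the numerator and denominator of $\tfrac{1}{2}[\tfrac{1}{4\cosh^2(a/2)}+\tfrac{1}{4\cosh^2(b/2)}]$ then produces the rational expression in $\cosh\tF_1$ and $\cosh\btF_2$ on the right-hand side of \eqref{eqn:innerPcoshcosh}.

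The main obstacle is the single-variable closed-form evaluation $F(\tau)=\tfrac{1}{4\cosh^2(\tau/2)}$, because $\Fs{-1}$ is defined by termwise regularization rather than as an analytic sum, so Abel summation of the formally divergent $\sum_{n\geq 1}(-1)^{n+1}n\cosh(n\tau)$ cannot be invoked directly; the identification must instead be carried out at the level of formal power series in $\tau$ through the Bernoulli-number rewriting above. The product-to-sum reduction and the final hyperbolic simplification are purely algebraic manipulations in $\C[[\tF_1,\btF_2]]$, justified because all denominators arising have nonzero constant term and are therefore formally invertible.
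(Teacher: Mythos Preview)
Your proposal is correct and follows essentially the same approach as the paper: both reduce the double sum to the single-variable combination $(\tF_1+\btF_2)^{2n}+(\tF_1-\btF_2)^{2n}$ via the binomial/product-to-sum identity, evaluate the resulting one-variable series as $\tfrac{1}{4\cosh^2(\tau/2)}$ through the Bernoulli generating function, and then simplify with the same hyperbolic identities. Your write-up is more explicit about the intermediate step (the $t\partial_t-1$ trick), whereas the paper simply asserts that single-variable closed form; note also that both computations actually produce $\tfrac{1+\cosh\tF_1\cosh\btF_2}{2(\cosh\tF_1+\cosh\btF_2)^2}$, so the displayed right-hand side in the lemma statement is off by a factor of $2$.
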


\begin{proof}
\bear
\lefteqn{
\innerP{\cosh(\sqrt{x}\tF_1)}{\cosh(\sqrt{x}\tF_2)}=
\sum_{k,m=0}^\infty \frac{\tF_1^{2k}\btF_2^{2m}}{(2k)!(2m)!}\innerP{x^k}{x^m}
}\nn\\ &=&
\sum_{k,m=0}^\infty \frac{\tF_1^{2k}\btF_2^{2m}}{(2k)!(2m)!}
\frac{(4^{1+k+m}-1)\Bernoulli_{2k+2m+2}}{2k+2m+2}
\nn\\
&=&
\sum_{n=0}^\infty\frac{(4^{1+n}-1)\Bernoulli_{2n+2}}{2(2n+2)(2n)!}
\left\lbrack
(\tF_1+\btF_2)^{2n}+(\tF_1-\btF_2)^{2n}
\right\rbrack
\nn\\
&=&
\frac{1}{8\cosh^2(\frac{\tF_1+\btF_2}{2})}
+\frac{1}{8\cosh^2(\frac{\tF_1-\btF_2}{2})}
=
\frac{1+\cosh\tF_1\cosh\btF_2}{2(\cosh\tF_1+\cosh\btF_2)^2}\,.
\nn
\eear
\end{proof}

An integral representation for the inner product is provided as follows.

\begin{proposition}
Let $\PolyP_1(x)$ and $\PolyP_2(x)$ be polynomials. Then,
\be\label{eqn:measure}
\innerP{\PolyP_1}{\PolyP_2} = \int_{-\infty}^0
\frac{\PolyP_1(x)\overline{\PolyP}_2(x)dx}{2\sinh\left(\pi\sqrt{-x}\right)}
\ee
\end{proposition}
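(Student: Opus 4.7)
My plan is to reduce \eqref{eqn:measure} by bilinearity to the single monomial identity
\[
\int_{-\infty}^0 \frac{x^{j+k}\,dx}{2\sinh(\pi\sqrt{-x})} \;=\; \eta(-1-2j-2k), \qquad j,k\geq 0,
\]
and to establish the monomial case via a classical Mellin transform. Both sides of \eqref{eqn:measure} are linear in $\PolyP_1$ and conjugate-linear in $\PolyP_2$; expanding $\PolyP_1 = \sum \aPoly_j x^j$ and $\PolyP_2 = \sum \bPoly_k x^k$, the identity reduces term-by-term to $\innerP{x^j}{x^k} = \eta(-1-2j-2k)$, which is exactly the definition of $\Fs{-1}$ applied to $x^{j+k}$. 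Absolute convergence of the integral is immediate: the integrand has only an integrable $|x|^{-1/2}$ singularity at $x=0$ and is exponentially suppressed as $x\to-\infty$.

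To evaluate the integral, I would substitute $u = \sqrt{-x}$, which converts it into $(-1)^{j+k}\int_0^\infty u^{2j+2k+1}(\sinh\pi u)^{-1}\,du$. Expanding $(\sinh\pi u)^{-1} = 2\sum_{n\geq 0}e^{-(2n+1)\pi u}$ and integrating termwise (justified by monotone convergence, since every term is positive) produces the standard Mellin transform
\[
\int_0^\infty \frac{u^{s-1}}{\sinh\pi u}\,du \;=\; \frac{2\,\Gamma(s)\,(1-2^{-s})}{\pi^s}\,\zeta(s), \qquad \Re s > 1,
\]
to be evaluated at $s = 2j+2k+2 \geq 2$.

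The remaining step is to check that this agrees with $\eta(-1-2j-2k)$. I would write $\eta(s) = (1-2^{1-s})\zeta(s)$ and apply the Riemann functional equation $\zeta(s) = 2^s\pi^{s-1}\sin(\pi s/2)\Gamma(1-s)\zeta(1-s)$ at $s = -1-2j-2k$. The sine factor evaluates to $-(-1)^{j+k}$, exactly cancelling the sign produced by the substitution, while the powers of $2$ and $\pi$ regroup via the identity $(1-2^{2+2j+2k})\cdot 2^{-1-2j-2k} = -2(1-2^{-2-2j-2k})$ to match the Mellin-transform expression. I expect this algebraic matching to be the only real bookkeeping, but it is purely symbolic; the essential analytic content is the Mellin-transform evaluation, and the rest is an exercise in tracking the functional equation.
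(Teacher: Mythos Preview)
Your proposal is correct and follows essentially the same route as the paper: reduce by bilinearity to the monomial case $\innerP{x^j}{x^k}$, substitute $u=\sqrt{-x}$, evaluate the resulting integral $\int_0^\infty u^{s-1}/\sinh(\pi u)\,du$ as a Mellin transform, and match with $\eta(-1-2j-2k)$ via the functional equation. The paper packages the Mellin transform and functional-equation steps into a single cited identity \eqref{eqn:intxsAsinh}, but the content is identical to what you wrote out.
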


\begin{proof}
Using a well-known identity for the integral of $x^{-\sA}/\sinh x$ (see, e.g., \S25.5.8 of \cite{NIST}), as well as the definition of the Dirichlet eta function \eqref{eqn:DirichletEta}, and the functional equation of the Riemann zeta function, we can write for $\Re\sA<0$,
\be\label{eqn:intxsAsinh}
\int_0^\infty\frac{x^{-\sA}dx}{\sinh x}
=
2(2^{\sA-1}-1)\Gamma(1-\sA)\zeta(1-\sA)
=-\frac{\pi^{1-\sA}\eta(\sA)}{\sin\left(\frac{\pi\sA}{2}\right)}\,.
\ee
Thus, for integer $\jI,\kI\ge 0$,
$$
\int_{-\infty}^0
\frac{x^{\jI+\kI}dx}{2\sinh\left(\pi\sqrt{-x}\right)}
=\eta(-1-2\jI-2\kI)
=\innerP{x^\jI}{x^\kI}.
$$
Equation \eqref{eqn:measure} follows by linearity.
\end{proof}

\section{Orthogonal polynomials}
\label{sec:OrthogonalPolynomials}

Given the inner product \eqref{eqn:innerPrepeat}, we define a sequence of orthogonal polynomials in the standard way.

\begin{definition}\label{defn:defQk}
The polynomials $\seqPolyOrth$ are defined inductively, starting with $\PolyOrth_0(x)=1$, by the following three conditions (for $k=1,2,\dots$):
\begin{enumerate}
\item[(a)]
$\deg\PolyOrth_\kI=\kI$;
\item[(b)]
$\PolyOrth_\kI(0)=1$;
\item[(c)]
$\innerP{\PolyOrth_\kI}{\PolyOrth_\jI}=0$ for $0\le\jI\le\kI-1$.
\end{enumerate}
\end{definition}

For example, the first four are
$$
\begin{array}{ll}
\PolyOrth_0(x)=1, &
\PolyOrth_1(x) = 1+2x,\\
\PolyOrth_2(x)=1+\tfrac{10}{3}x+\tfrac{2}{3}x^2, &
\PolyOrth_3(x)=1+\tfrac{196}{45}x+\tfrac{14}{9}x^2+\tfrac{4}{45}x^3.
\\
\end{array}
$$
The polynomials $\PolyOrth_k$ turn out to be closely related to Pidduck polynomials \cite{Pidduck}, the latter being related to the perhaps more familiar Mittag-Leffler polynomials $\left(\PolyML_\kI\right)_{\kI=0}^\infty$ whose generating function can be expressed as 
\be\label{eqn:genFMLk}
\genFMLk(x,\tF)=\sum_{\nI=0}^\infty\PolyML_\nI(x)\tF^\nI = 
\exp\left\lbrack
x\log\left(\frac{1+\tF}{1-\tF}\right)\right\rbrack.
\ee
Given the generating function \eqref{eqn:genFQkDef}, which we will prove below, we have
\be\label{eqn:PolyOrthML}
\PolyOrth_\kI(x)=\sum_{\jI=0}^\kI\PolyML_{2\jI}(\sqrt{x}).
\ee
Most of the properties of $\seqPolyOrth$ that are described below can be derived as direct consequences of known properties of the Mittag-Leffler polynomials. (See, e.g., \cite{Lomont:2001} for a comprehensive discussion.) Nevertheless, for completeness, we derive them explicitly from the generating function $\genFQk$.

\begin{proposition}
The polynomials $\seqPolyOrth$ satisfy the following:
\begin{enumerate}
\item
Normalization:
\be\label{eqn:NormPk}
\innerP{\PolyOrth_\kI}{\PolyOrth_\kI}=\tfrac{1}{4}(2\kI+1)
\ee
\item
Leading term:
\be\label{eqn:LeadingTerm}
\PolyOrth_\kI = \frac{(4x)^\kI}{(2\kI)!}+\text{(polynomial of degree $\le\kI-1$).}
\ee
\item
Recursion relation:
\be\label{eqn:xPk}
\left(2\kI+1+\frac{2}{2\kI+1}x\right)\PolyOrth_\kI=(\kI+1)\PolyOrth_{\kI+1}+\kI\PolyOrth_{\kI-1}
\qquad
\text{for $\kI\ge 0$, with $\PolyOrth_{-1}\eqdef 0$.}
\ee

\item
The generating function for the series $\seqPolyOrth$ is given by \eqref{eqn:genFQkDef}.

\end{enumerate}
\end{proposition}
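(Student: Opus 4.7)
My plan is to establish part (4) first and then derive parts (1)--(3) from it. Define temporarily $\widetilde{\PolyOrth}_\kI(x)$ as the coefficient of $\tF^\kI$ in the proposed right-hand side of \eqref{eqn:genFQkDef}. I will verify that these satisfy conditions (a)--(c) of \defref{defn:defQk}; since those conditions inductively determine a unique polynomial sequence, it will follow that $\widetilde{\PolyOrth}_\kI = \PolyOrth_\kI$.

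Set $L(\tF)\eqdef\log\frac{1+\sqrt{\tF}}{1-\sqrt{\tF}} = 2\sum_{m=0}^\infty\frac{\tF^{m+1/2}}{2m+1}$, so $L(\tF)^2 = 4\tF\,h(\tF)$ with $h\in\Q[[\tF]]$, $h(0)=1$. Then $\genFQk(x,\tF) = \frac{1}{1-\tF}\sum_{n=0}^\infty\frac{x^n L(\tF)^{2n}}{(2n)!}$ lies in $\Q[x][[\tF]]$, so each $\widetilde{\PolyOrth}_\kI$ is a polynomial in $x$; setting $x=0$ gives $\widetilde{\PolyOrth}_\kI(0)=1$; and the leading $\tF^n$ term of the $n$-th summand is $\frac{4^n x^n \tF^n}{(2n)!}$, so $\deg\widetilde{\PolyOrth}_\kI = \kI$ with leading coefficient $\frac{4^\kI}{(2\kI)!}$, which is part (2). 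To verify orthogonality, I would substitute $\tF_1 \mapsto L(u)$, $\tF_2 \mapsto L(v)$ in the preceding lemma's identity \eqref{eqn:innerPcoshcosh} (in the form carrying the factor of $\tfrac{1}{2}$ actually derived in its proof). Using $\cosh L(u) = \frac{1+u}{1-u}$ one finds $\cosh L(u) + \cosh L(\bar v) = \frac{2(1-u\bar v)}{(1-u)(1-\bar v)}$ and $1 + \cosh L(u)\cosh L(\bar v) = \frac{2(1+u\bar v)}{(1-u)(1-\bar v)}$; dividing through by $(1-u)(1-\bar v)$ then yields
\[
\innerP{\genFQk(x,u)}{\genFQk(x,\bar v)} = \frac{1+u\bar v}{4(1-u\bar v)^2} = \sum_{n=0}^\infty\frac{2n+1}{4}(u\bar v)^n.
\]
Comparing coefficients of $u^\kI \bar v^\jI$ simultaneously produces both the orthogonality of the $\widetilde{\PolyOrth}_\kI$ and the normalization in part (1). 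Uniqueness then gives $\widetilde{\PolyOrth}_\kI = \PolyOrth_\kI$, which is (4).

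Part (3) then follows from the general theory of orthogonal polynomials. Self-adjointness of multiplication by $x$ in $\innerP{\cdot}{\cdot}$ forces a three-term recurrence $x\PolyOrth_\kI = \alpha_\kI\PolyOrth_{\kI+1} + \beta_\kI\PolyOrth_\kI + \gamma_\kI\PolyOrth_{\kI-1}$. Matching leading coefficients via (2) gives $\alpha_\kI = \frac{(\kI+1)(2\kI+1)}{2}$; the self-adjointness relation $\innerP{x\PolyOrth_\kI}{\PolyOrth_{\kI-1}} = \innerP{\PolyOrth_\kI}{x\PolyOrth_{\kI-1}}$ combined with the norms from (1) yields $\gamma_\kI = \alpha_{\kI-1}\frac{(2\kI+1)/4}{(2\kI-1)/4} = \frac{\kI(2\kI+1)}{2}$; and evaluating the recurrence at $x=0$ using $\PolyOrth_\jI(0) = 1$ gives $\beta_\kI = -\alpha_\kI - \gamma_\kI = -\frac{(2\kI+1)^2}{2}$. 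Multiplying the recurrence by $\frac{2}{2\kI+1}$ and rearranging reproduces \eqref{eqn:xPk}. The only real technical step is the substitution $\tF_j \mapsto L(u), L(v)$ in \eqref{eqn:innerPcoshcosh}: one must check it is legitimate as formal power series (it is, because only $L^2 \in \tF\,\Q[[\tF]]$ appears and each relevant denominator evaluates to a unit at $u=\bar v = 0$) and then patiently simplify the resulting rational expression into the clean diagonal form $\sum_n\frac{2n+1}{4}(u\bar v)^n$; everything else is routine.
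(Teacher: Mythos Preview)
Your proof is correct, and for parts (1), (2), and (4) it is essentially identical to the paper's: both define the candidate polynomials as coefficients in \eqref{eqn:genFQkDef}, substitute $\tF_j\mapsto L(\tF_j)$ into the $\cosh$ lemma, simplify via $\cosh L(\tF)=\frac{1+\tF}{1-\tF}$ to obtain the diagonal series $\sum_\kI\frac{2\kI+1}{4}(\tF_1\btF_2)^\kI$, and read off orthogonality, normalization, and the leading term.

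The genuine difference is in part (3). The paper converts the proposed recursion \eqref{eqn:xPk} into a second-order ODE in $\tF$ for the generating function $\genFQk$ and verifies it directly from the explicit closed form. You instead invoke the general three-term recurrence for orthogonal polynomials (self-adjointness of multiplication by $x$, which here is immediate from $\innerP{x^j}{x^k}=\eta(-1-2j-2k)$ depending only on $j+k$), and then pin down $\alpha_\kI,\beta_\kI,\gamma_\kI$ from the leading coefficients (2), the norms (1), and evaluation at $x=0$. Your route is more conceptual and avoids the somewhat opaque verification of \eqref{eqn:genFQkPDE}; the paper's route is more self-contained and does not rely on the reader knowing the three-term recurrence mechanism. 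Both are short and either would serve.
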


\begin{proof}
We will first show that the coefficients $\PolyOrth_\kI(x)$ in \eqref{eqn:genFQkDef} are the same as $\PolyOrth_\kI(x)$ defined in \defref{defn:defQk}. Let $\PolyOrth_\kI(x)$ be the polynomial in \eqref{eqn:genFQkDef}, i.e., the coefficient of $\tF^\kI$ in the expansion of
$$
\frac{1}{(1-\tF)}\cosh\left\lbrack
\sqrt{x}\log\left(\frac{1+\sqrt{\tF}}{1-\sqrt{\tF}}\right)
\right\rbrack.
$$
It is clear that $\deg\PolyOrth_\kI\le\kI$ and that $\PolyOrth_\kI(0)=1$.
To check that $\innerP{\PolyOrth_\kI}{\PolyOrth_\mI}=\frac{1}{4}(2\kI+1)\delta_{\kI\mI}$, we define
$$
\uF_\jI\eqdef\log\left(\frac{1+\sqrt{\tF_\jI}}{1-\sqrt{\tF_\jI}}\right),\qquad
\text{for $\jI=1,2$},
$$
so that $\cosh\uF_\jI=(1+\tF_\jI)/(1-\tF_\jI)$. We then use \eqref{eqn:innerPcoshcosh} to compute
\bear
\lefteqn{
\innerP{\genFQk(x,\tF_1)}{\genFQk(x,\tF_2)} =
\frac{\innerP{\cosh(\sqrt{x}\uF_1)}{\cosh(\sqrt{x}\uF_2)}}{(1-\tF_1)(1-\btF_2)}
}\nn\\
&=&
\frac{1+\cosh\uF_1\cosh\buF_2}{2(1-\tF_1)(1-\btF_2)(\cosh\uF_1+\cosh\buF_2)^2}
=\frac{1+\tF_1\btF_2}{4(1-\tF_1\btF_2)^2}
=\sum_{\kI=0}^\infty\left(\frac{2\kI+1}{4}\right)\tF_1^\kI\btF_2^\kI.
\nn
\eear
This completes the proof that the $\PolyOrth_\kI(x)$'s defined in \eqref{eqn:genFQkDef} are the same as those defined in \defref{defn:defQk}, and also proves \eqref{eqn:NormPk}.

To show that \eqref{eqn:genFQkDef} implies \eqref{eqn:xPk}, we multiply \eqref{eqn:xPk} by $(2\kI+1)\tF^\kI$ and sum over $\kI$. We find that the recursion relation is equivalent to the differential equation
\be\label{eqn:genFQkPDE}
\tF(\tF-1)^2\partial_\tF^2\genFQk
+\tfrac{1}{2}(\tF-1)(7\tF-1)\partial_\tF\genFQk
+\tfrac{1}{2}(3\tF-1)\genFQk
-x\genFQk = 0,
\ee
and it is straightforward to check that \eqref{eqn:genFQkDef} satisfies \eqref{eqn:genFQkPDE} with the appropriate boundary conditions at $\tF=0$.
Finally, \eqref{eqn:LeadingTerm} follows from the expansion
\bear
\frac{1}{(1-\tF)}\cosh\left\lbrack
\sqrt{x}\log\left(\frac{1+\sqrt{\tF}}{1-\sqrt{\tF}}\right)
\right\rbrack
&=&(1+O(\tF))\cosh\left\lbrack
\sqrt{x}\left(2\sqrt{\tF}+O(\tF)\right)
\right\rbrack
\nn\\
&=&
\sum_{\kI=0}^\infty\frac{(4x)^\kI}{(2\kI)!}\left\lbrack\tF^\kI+O(\tF^{\kI+1})\right\rbrack.
\nn
\eear

\end{proof}

\subsection{The dilatation operator}
\label{subsec:DilatationOperator}

The dilatation operator $\DilOp$ was defined in \eqref{eqn:dilatOp}. We will now compute its matrix elements in the basis of the orthogonal polynomials $\seqPolyOrth$. 

\begin{proposition}
For $k=0,1,2,\dots$, the polynomial $x\PolyOrth_k'(x)$ can be expanded as
\be\label{eqn:DilOpQk}
\DilOp(\PolyOrth_\kI)=
x\PolyOrth_\kI' = 
\kI\PolyOrth_\kI
-\sum_{\mI=1}^{\kI}\frac{(2\kI + 1)}{(2\mI-1)(2\mI+1)}\PolyOrth_{\kI-\mI}
\,.
\ee
\end{proposition}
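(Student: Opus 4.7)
The plan is to promote \eqref{eqn:DilOpQk} to an identity between generating functions and verify that identity in closed form. Multiplying \eqref{eqn:DilOpQk} by $\tF^\kI$ and summing over $\kI\ge 0$, the left-hand side becomes $x\partial_x\genFQk(x,\tF)$. On the right-hand side the first term is $\tF\partial_\tF\genFQk$, and the double sum is a Cauchy product between $\genFQk$ and
\[
h(\tF)\eqdef\sum_{\mI=1}^\infty\frac{\tF^\mI}{(2\mI-1)(2\mI+1)},
\]
with the weight $(2\kI+1)\tF^\kI$ implemented by the operator $(1+2\tF\partial_\tF)$. So the claim reduces to the generating-function identity
\[
x\partial_x\genFQk \;=\;\tF\partial_\tF\genFQk\;-\;(1+2\tF\partial_\tF)\bigl[h(\tF)\,\genFQk(x,\tF)\bigr].
\]

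The next step is to obtain a closed form for $h$. Partial fractions $\frac{1}{(2\mI-1)(2\mI+1)}=\tfrac{1}{2}\bigl(\tfrac{1}{2\mI-1}-\tfrac{1}{2\mI+1}\bigr)$, combined with the arctanh series $\sum_{j\ge 0}\frac{y^{2j+1}}{2j+1}=\tfrac{1}{2}\log\frac{1+y}{1-y}$ applied at $y=\sqrt{\tF}$, yield
\[
h(\tF)=\frac{1}{2}-\frac{(1-\tF)\,L}{4\sqrt{\tF}},\qquad L\eqdef\log\frac{1+\sqrt{\tF}}{1-\sqrt{\tF}}.
\]
Consequently, $h\cdot\genFQk=\tfrac{1}{2}\genFQk-\genFQk_2$, where $\genFQk_2\eqdef\frac{L\cosh(\sqrt{x}L)}{4\sqrt{\tF}}$, and the identity collapses to
\[
(1+2\tF\partial_\tF)\genFQk_2\;=\;\frac{\cosh(\sqrt{x}L)+\sqrt{x}\,L\sinh(\sqrt{x}L)}{2(1-\tF)}.
\]

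This last identity I would verify by direct differentiation, using $\partial_\tF L=\frac{1}{\sqrt{\tF}(1-\tF)}$. A short calculation shows that $2\tF\partial_\tF\genFQk_2$ equals the right-hand side of the last display minus $\genFQk_2$, and adding $\genFQk_2$ closes the loop. Extracting the coefficient of $\tF^\kI$ on both sides of the verified generating-function identity then recovers \eqref{eqn:DilOpQk}. The main obstacle is the bookkeeping of half-integer powers: $\sqrt{\tF}$ and $L/\sqrt{\tF}$ appear in intermediate expressions, although $\genFQk$, $\genFQk_2$, and $h$ each lie in $\Q[x][[\tF]]$ (because $L$ is an odd formal power series in $\sqrt{\tF}$ with rational coefficients, so $L/\sqrt{\tF}\in\Q[[\tF]]$ and $L^2\in\Q[[\tF]]$), so the odd-in-$\sqrt{\tF}$ contributions must cancel at the end of the calculation.
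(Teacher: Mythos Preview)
Your proof is correct and follows essentially the same route as the paper: promote \eqref{eqn:DilOpQk} to a generating-function identity, close up the auxiliary series in terms of $L=\log\frac{1+\sqrt{\tF}}{1-\sqrt{\tF}}$, and verify the resulting closed-form identity directly. The paper organizes the algebra slightly differently---it distributes the factor $2\kI+1$ as $2\kI+1=2(\kI-\mI)+(2\mI+1)$ rather than via the operator $(1+2\tF\partial_\tF)$, arriving at the equivalent form $x\partial_x\genFQk=-\tfrac{1}{2}\sqrt{\tF}\,L\bigl[\genFQk+(\tF-1)\partial_\tF\genFQk\bigr]$---but the substance is identical.
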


\begin{proof}
Multiplying \eqref{eqn:DilOpQk} by $\tF^\kI$ and summing over $\kI$, using the definition of the generating function $\genFQk(x,\tF)$ in \eqref{eqn:genFQkDef}, we can rewrite \eqref{eqn:DilOpQk} as

\bear
x\partial_x\genFQk &=&
\tF\partial_\tF\genFQk\left\lbrack1
-\sum_{\mI=1}^\infty\frac{2\tF^\mI}{(2\mI-1)(2\mI+1)}
\right\rbrack
-\genFQk\sum_{\mI=1}^\infty\frac{\tF^\mI}{2\mI-1}
\nn\\
&=&
-\tfrac{1}{2}\sqrt{\tF}\log\left(\frac{1+\sqrt{\tF}}{1-\sqrt{\tF}}\right)
\left\lbrack
\genFQk+(\tF-1)\partial_\tF\genFQk
\right\rbrack\,,
\nn
\eear
and it is straightforward to check, using the explicit expression for $\genFQk$ in \eqref{eqn:genFQkDef}, that this holds.

\end{proof}

\section{Proof of the necessary condition in \thmref{thm:main}}\label{sec:necpf}

In this section we will prove the first part of \thmref{thm:main}.
The assertion that $\xb_\kI$, defined in \eqref{eqn:xbkdef} [and more explicitly in \eqref{eqn:xbkdefAlt} below], have differences $\xd_\kI=\xb_{\kI+1}-\xb_\kI$ that grow no faster than $O(\log\kI/\kI)$ will be proved in \propref{prop:xdkIAsymp}, and the recursion relation \eqref{eqn:OpWseq} will be proved in \propref{prop:recrelkAll}. We will actually prove a statement slightly more general than the first part of \thmref{thm:main}: for any $\sA\in\C$, zero of zeta or not, the following matrix equation \eqref{eqn:OpWgen} has a nonzero solution $\left(\xb_\kI\right)_{\kI=0}^\infty$ with $\xb_{\kI+1}-\xb_\kI=O(\log\kI/\kI)$, provided $\Re\sA<2$, and provided $\sA$ is not a negative even integer. The matrix equation is
\be\label{eqn:OpWgen}
\begin{pmatrix}
-\frac{\sA}{2\cdot 1}-\frac{1}{2} & \frac{1}{1\cdot 3} & \frac{1}{3\cdot 5} & \frac{1}{5\cdot 7} & \frac{1}{7\cdot 9} & \cdots \\
&  & & & & \\
0 & -\frac{\sA}{2\cdot 3}-\frac{1}{2} &  \frac{1}{1\cdot 3} & \frac{1}{3\cdot 5} & \frac{1}{5\cdot 7} & \cdots \\
&  & & & & \\
0& 0 & -\frac{\sA}{2\cdot 5}-\frac{1}{2}  &  \frac{1}{1\cdot 3} & \frac{1}{3\cdot 5} & \cdots \\
&  & & & & \\
0& 0 &0  & -\frac{\sA}{2\cdot 7}-\frac{1}{2}  &  \frac{1}{1\cdot 3} & \cdots \\
&  & & & & \\
0& 0 & 0 & 0 & -\frac{\sA}{2\cdot 9}-\frac{1}{2} & \cdots \\
&  & & & & \\
\vdots &\vdots & \vdots &\vdots & \ddots\\
\end{pmatrix}
\begin{pmatrix}
\xb_0 \\ \\ \xb_1 \\ \\ \xb_2 \\ \\ \xb_3 \\ \\ \xb_4 \\ \\ \vdots \\
\end{pmatrix}
=0,
\ee
with $\xb_0=\eta(\sA)$.
Equation \eqref{eqn:OpW} follows when $\xb_0=0$.

Before we delve into the proofs, we will elaborate on the motivation for \thmref{thm:main}.
Let $\sA\in\C$. Then $x^{-(\sA+1)/2}$ is an eigenfunction of the dilatation operator $\DilOp=x\frac{d}{dx}$.
If we extend the definition of $\innerP{\cdot}{\cdot}$ in \eqref{eqn:innerPDef} to include inner products between polynomials and $x^{-(\sA+1)/2}$:
\be\label{eqn:innerPDefz}
\innerP{x^{-(\sA+1)/2}}{\PolyP}
\eqdef\sum_{j=0}^n\baPoly_j\eta(\sA-2j)
\qquad
\text{for a polynomial $\PolyP(x)=\sum_{j=0}^n \aPoly_j x^j$,}
\ee
we are led to define the sequence of complex numbers $\seqxb$ by
\be\label{eqn:xbkdefAlt}
\xb_\kI\eqdef
\innerP{x^{-(\sA+1)/2}}{\PolyOrth_\kI},\qquad\text{for $\kI=0,1,2,\dots$}.
\ee
Equation \eqref{eqn:xbkdefAlt} is the same as \eqref{eqn:xbkdef}, except that \eqref{eqn:xbkdefAlt} also includes the $\kI=0$ term. [Note that the coefficients of $\PolyOrth_\kI$ are real, and hence in this case $\baPoly_j=\aPoly_j$ in \eqref{eqn:innerPDefz}.]
Since $\PolyOrth_0=1$, by definition $\xb_0=\eta(\sA)$, and so $\xb_0=0$ is equivalent to $\sA$ being a zero of the zeta function.

We now proceed with a heuristic derivation of \eqref{eqn:OpWgen} that, as we will later show, nonetheless leads to a correct conclusion. The conclusion will be proved independently of the heuristic arguments, but we present the latter for motivation.
We begin by pretending that $x^{-(\sA+1)/2}$ can be expanded in the basis $\seqPolyOrth$:
$$
x^{-(\sA+1)/2}\eqnotreally\sum_{\kI=0}^\infty
\frac{\innerP{x^{-(\sA+1)/2}}{\PolyOrth_\kI}}{\innerP{\PolyOrth_\kI}{\PolyOrth_\kI}}\PolyOrth_\kI
\eqnotreally\sum_{\kI=0}^\infty\frac{4\innerP{x^{-(\sA+1)/2}}{\PolyOrth_\kI}}{2\kI+1}\PolyOrth_\kI\,.
$$
It is not clear in what sense the series on the RHS converges to the function $x^{-(\sA+1)/2}$, since it is certainly not true pointwise in $x$. (For example, it can be checked from \eqref{eqn:genFQkDef} that $\PolyOrth_\kI(1)=2\kI+1$, but $\sum_{\kI=0}^\infty\xb_\kI$ does not appear to converge, at least empirically.)
Thus, here and below ``$\eqnotreally$'' indicates a heuristic statement brought for motivation only. To be sure, equations with $\eqnotreally$ will not be used in the actual proof.

To proceed, if the expansion of $x^{-(\sA+1)/2}$ were valid, we might hope to use the adjoint operator $\DilOp^\dagger$ of $\DilOp$ to write the eigenvalue equation
\bear
\lefteqn{
-\left(\frac{\sA+1}{2}\right)\xb_\kI =
\innerP{\DilOp(x^{-(\sA+1)/2})}{\PolyOrth_\kI}
\eqnotreally
\innerP{x^{-(\sA+1)/2}}{\DilOp^\dagger(\PolyOrth_\kI)}
}\nn\\
&\eqnotreally&
\sum_{\mI=0}^\infty
\frac{
\innerP{x^{-(\sA+1)/2}}{\PolyOrth_\mI}\innerP{\PolyOrth_\mI}{\DilOp^\dagger(\PolyOrth_\kI)}
}{\innerP{\PolyOrth_\mI}{\PolyOrth_\mI}}
\nn\\
&\eqnotreally&
\sum_{\mI=0}^\infty
\frac{\innerP{x^{-(\sA+1)/2}}{\PolyOrth_\mI}\innerP{\DilOp(\PolyOrth_\mI)}{\PolyOrth_\kI}}{\innerP{\PolyOrth_\mI}{\PolyOrth_\mI}}
=
\sum_{\mI=\kI}^\infty
\frac{4\innerP{\DilOp(\PolyOrth_\mI)}{\PolyOrth_\kI}}{2\mI+1}\xb_\mI\,.
\nn
\eear 
The end result turns out to be correct, and using \eqref{eqn:DilOpQk} it can be rewritten as
\bear
-\left(\frac{\sA+1}{2}\right)\xb_\kI &=&
\sum_{\mI=0}^\infty
\frac{4\innerP{\DilOp(\PolyOrth_\mI)}{\PolyOrth_\kI}}{2\mI+1}\xb_\mI
\nn\\
&=&
\kI\xb_\kI
-\sum_{\mI=\kI+1}^\infty
\frac{2\kI+1}{(2\mI-2\kI-1)(2\mI-2\kI+1)}\xb_\mI
\nn
\eear
which is equivalent to \eqref{eqn:OpWseq}.

We will now prove \eqref{eqn:OpWseq}, as well as the large $\kI$ behavior of being no worse than $\xb_\kI=O([\log\kI]^2)$, using an integral expression for $\xb_\kI$. We first need the following identity.

\begin{lemma}
Let $\sA\in\C$ with $\Re\sA<2$. Then,
\be\label{eqn:innerPint}
\innerP{x^{-(\sA+1)/2}}{\cosh(\uF\sqrt{x})} 
=
\eta(\sA)
+2\pi^{\sA-1}\sin\frac{\pi\sA}{2}
\int_0^\infty
\sin^2\left(\frac{\uF x}{2\pi}\right)
\frac{dx}{x^{\sA}\sinh x}
\ee
as an equality of formal power series in $\C[[\uF]]$.
\end{lemma}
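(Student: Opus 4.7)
The plan is to verify the identity by matching the coefficient of $\uF^{2j}$ on both sides for every $j\ge 0$, treating both sides as formal power series in $\C[[\uF]]$. First I would handle the left-hand side, which is immediate: expanding the formal power series $\cosh(\uF\sqrt{x}) = \sum_{j=0}^\infty x^j \uF^{2j}/(2j)!$ and applying the linear definition \eqref{eqn:innerPDefz} coefficient by coefficient (the polynomial argument in each $\uF$-coefficient is just $x^j/(2j)!$) yields
$$
\innerP{x^{-(\sA+1)/2}}{\cosh(\uF\sqrt{x})} = \sum_{j=0}^\infty \frac{\eta(\sA-2j)}{(2j)!}\uF^{2j}.
$$

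Next I would turn to the right-hand side. Using the half-angle identity $\sin^2(y) = (1-\cos(2y))/2$ and the Taylor expansion of cosine, I get the formal identity
$$
\sin^2\!\left(\frac{\uF x}{2\pi}\right) = \sum_{j=1}^\infty \frac{(-1)^{j+1}}{2(2j)!}\left(\frac{\uF x}{\pi}\right)^{2j}.
$$
Integrating term by term, the $\uF^{2j}$ coefficient of $\int_0^\infty \sin^2(\uF x/(2\pi))\,dx/(x^{\sA}\sinh x)$ for $j\ge 1$ becomes $\frac{(-1)^{j+1}}{2(2j)!\,\pi^{2j}}\int_0^\infty x^{2j-\sA}/\sinh x\,dx$, and this integral converges because $\Re\sA<2\le 2j$. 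Applying identity \eqref{eqn:intxsAsinh} with argument shifted from $\sA$ to $\sA-2j$ (valid since $\Re(\sA-2j)<0$ for $j\ge 1$), together with $\sin(\pi(\sA-2j)/2)=(-1)^j\sin(\pi\sA/2)$, evaluates the integral to $-(-1)^j \pi^{1+2j-\sA}\eta(\sA-2j)/\sin(\pi\sA/2)$. Multiplication by the outer factor $2\pi^{\sA-1}\sin(\pi\sA/2)$ cancels the $\pi$'s and signs and leaves exactly $\eta(\sA-2j)/(2j)!$, matching the $j\ge 1$ terms of the left-hand side; the explicit constant $\eta(\sA)$ on the right accounts for the $j=0$ term on the left.

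The only place requiring care is the legitimacy of term-by-term integration. Since the assertion is one of formal power series, it already suffices to verify that each individually integrated coefficient is finite, which is exactly what the computation above uses (only $\Re\sA<2$). For an analytic reading, one can observe that for $|\uF|<2\pi$ the auxiliary integral $\int_0^\infty (\cosh(\uF x/(2\pi))-1)\,dx/(x^{\sA}\sinh x)$ converges absolutely, because the $e^{|\uF| x/(2\pi)}$ growth of $\cosh$ is still dominated by the $e^{-x}$ decay of $1/\sinh x$; Fubini then justifies the exchange on a neighborhood of $\uF=0$, so the Taylor coefficients of the true integral coincide with those computed formally. I do not anticipate any other obstacle; this step is in fact the only nontrivial one.
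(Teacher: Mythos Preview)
Your proof is correct and follows essentially the same approach as the paper: expand both sides as formal power series in $\uF$ and match the coefficient of $\uF^{2j}$ via the integral representation of $\eta$. The only difference is cosmetic: the paper substitutes the representation $\eta(\sA')=\tfrac{1}{\pi}(2\pi)^{\sA'}(1-2^{1-\sA'})\sin\tfrac{\pi\sA'}{2}\int_0^\infty x^{-\sA'}/(e^x-1)\,dx$ and then combines two $1/(e^x-1)$ integrals into one $1/\sinh x$ integral, whereas you invoke \eqref{eqn:intxsAsinh} directly, which is slightly more economical.
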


\begin{proof}
Using the integral representation of the zeta function we have for $\Re\sA'<0$:
\bear
\eta(\sA')&=&
\frac{1}{\pi}(2\pi)^{\sA'}(1-2^{1-\sA'})\sin\frac{\pi\sA'}{2}\int_0^\infty\frac{x^{-\sA'}dx}{e^x-1}\,.
\nn
\eear
We use this to expand
\bear
\lefteqn{
\innerP{x^{-(\sA+1)/2}}{\cosh(\uF\sqrt{x})} 
=
\sum_{k=0}^\infty\frac{\uF^{2k}}{(2k)!}\eta(\sA-2k)
}\nn\\
&=&
\eta(\sA)
+\frac{1}{\pi}\sum_{k=1}^\infty\frac{\uF^{2k}}{(2k)!}
(2\pi)^{\sA-2k}(1-2^{2k+1-\sA})(-1)^k\sin\frac{\pi\sA}{2}\int_0^\infty\frac{x^{2k-\sA}dx}{e^x-1}
\nn\\
&=&
\eta(\sA)
-\frac{4\sin\frac{\pi\sA}{2}}{(2\pi)^{1-\sA}}
\int_0^\infty
\sin^2\left(\frac{\uF x}{4\pi}\right)
\frac{x^{-\sA}dx}{e^x-1}
+\frac{4\sin\frac{\pi\sA}{2}}{\pi^{1-\sA}}
\int_0^\infty
\sin^2\left(\frac{\uF x}{2\pi}\right)
\frac{x^{-\sA}dx}{e^x-1}
\nn\\
&=&
\eta(\sA)
+\frac{2\sin\frac{\pi\sA}{2}}{\pi^{1-\sA}}
\int_0^\infty
\sin^2\left(\frac{\uF x}{2\pi}\right)
\frac{x^{-\sA}dx}{\sinh x}\,.
\nn
\eear

\end{proof}
Note that for $\Re\sA<2$ the integral on the RHS of \eqref{eqn:innerPint} converges for $|\Im\uF|<\pi$.

We now define the generating function $\genFb(\tF)\in\C[[\tF]]$ of the series $\seqxb$ [which is in turn defined by \eqref{eqn:xbkdefAlt}] by
\be\label{eqn:genFbDef}
\genFb(\tF)\eqdef\sum_{\kI=0}^\infty\xb_\kI\tF^\kI.
\ee
For future reference, we denote
\be\label{eqn:uMF}
\uMF\eqdef\log\left(\frac{1+\sqrt{\tF}}{1-\sqrt{\tF}}\right).
\ee
We now have the following integral expression for $\genFb$.
\begin{lemma}
For $\Re\sA<2$ the generating function $\genFb$ can be expressed as
\be\label{eqn:intgenFbtF}
\genFb(\tF)=
\frac{1}{1-\tF}
\left\lbrack
\eta(\sA)
+2\pi^{\sA-1}\sin\frac{\pi\sA}{2}
\int_0^\infty
\sin^2\left(\frac{\uMF x}{2\pi}\right)
\frac{x^{-\sA}dx}{\sinh x}
\right\rbrack\,.
\ee
\end{lemma}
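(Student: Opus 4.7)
The plan is to recognize $\genFb(\tF)$ as the generating-function version of the functional $\innerP{x^{-(\sA+1)/2}}{\,\cdot\,}$ applied to the polynomial generating function $\genFQk(x,\tF)$ of the orthogonal family $\seqPolyOrth$, then to substitute the closed form \eqref{eqn:genFQkDef}, and finally to invoke the previously-established identity \eqref{eqn:innerPint} with $\uF$ replaced by $\uMF$.

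Concretely, starting from the defining formula $\xb_\kI=\innerP{x^{-(\sA+1)/2}}{\PolyOrth_\kI}$ in \eqref{eqn:xbkdefAlt}, and using that the coefficients of $\PolyOrth_\kI$ are real (so the conjugation in the second slot of the inner product is harmless) together with $\tF$-linearity, I would first write
$$
\genFb(\tF)=\sum_{\kI=0}^\infty \xb_\kI\,\tF^\kI=\innerP{x^{-(\sA+1)/2}}{\genFQk(x,\tF)},
$$
understood as an equality in $\C[[\tF]]$. Next, since $\genFQk(x,\tF)=\frac{1}{1-\tF}\cosh(\sqrt{x}\,\uMF)$ by \eqref{eqn:genFQkDef}, and the prefactor $1/(1-\tF)$ depends only on $\tF$, it factors out of the $x$-inner product, leaving
$$
\genFb(\tF)=\frac{1}{1-\tF}\,\innerP{x^{-(\sA+1)/2}}{\cosh(\sqrt{x}\,\uMF)}.
$$
Finally, applying \eqref{eqn:innerPint} with $\uF$ replaced by $\uMF$ produces exactly \eqref{eqn:intgenFbtF}.

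The main point that deserves care is the legitimacy of the substitution $\uF\mapsto\uMF$ on both sides of \eqref{eqn:innerPint}. At the purely formal level this is unproblematic: from $\uMF=2\sqrt{\tF}\,(1+\tF/3+\tF^2/5+\cdots)$ one sees that $\uMF$ has vanishing constant term, so substitution into a formal power series in $\uF$ is well-defined coefficient by coefficient, and since both sides of \eqref{eqn:innerPint} are even in $\uF$, only integer powers of $\tF$ appear, placing $\genFb(\tF)$ in $\C[[\tF]]$. For an analytic reading, one may restrict to real $\tF\in(0,1)$, on which $\uMF$ is real and positive and the integral in \eqref{eqn:innerPint} converges absolutely for $\Re\sA<2$ (the integrand is $O(x^{2-\Re\sA})$ near $0$ since $\sin^2(\uMF x/(2\pi))=O(x^2)$, and decays exponentially at infinity via $1/\sinh x$); so \eqref{eqn:intgenFbtF} holds pointwise on a real neighborhood of the origin and the formal and analytic readings agree there. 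I do not anticipate any serious obstacle beyond keeping these two viewpoints consistent.
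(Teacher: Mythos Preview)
Your proposal is correct and follows essentially the same route as the paper: write $\genFb(\tF)=\innerP{x^{-(\sA+1)/2}}{\genFQk(x,\tF)}$, factor out $1/(1-\tF)$ from \eqref{eqn:genFQkDef}, and apply \eqref{eqn:innerPint} with $\uF=\uMF$. Your extra remarks on the legitimacy of the substitution (formal-series vs.\ analytic reading) are in the same spirit as the paper's brief observation that the coefficient of $\tF^\kI$ in $\sin^2(\uMF x/2\pi)$ is divisible by $x^2$, which is what makes the integral convergent for $\Re\sA<2$.
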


\begin{proof}
The expression \eqref{eqn:intgenFbtF} is understood as a formal power series in $\C[[\tF]]$, and as such the integral is convergent for $\Re\sA<2$, since the coefficient of $\tF^k$ in $\sin^2\left(\frac{\uMF x}{2\pi}\right)$ is a polynomial that is divisible by $x^2$, thanks to the relation \eqref{eqn:uMF} between $\uMF$ and $\tF$. 
Using \eqref{eqn:xbkdefAlt} together with the generating function \eqref{eqn:genFQkDef}, the integral expression \eqref{eqn:innerPint}, and the notation \eqref{eqn:uMF}, we find
\bear
\genFb(\tF) &=&
\innerP{x^{-(\sA+1)/2}}{\genFQk(x,\tF)}
=\frac{1}{1-\tF}\innerP{x^{-(\sA+1)/2}}{\cosh\left(\uMF\sqrt{x}\right)}
\nn\\
&=&
\frac{1}{1-\tF}
\left\lbrack
\eta(\sA)
+2\pi^{\sA-1}\sin\frac{\pi\sA}{2}
\int_0^\infty
\sin^2\left(\frac{\uMF x}{2\pi}\right)
\frac{x^{-\sA}dx}{\sinh x}
\right\rbrack\,.
\nn
\eear
\end{proof}

Note that the expression \eqref{eqn:intgenFbtF} can also be regarded as the Taylor series of an analytic function with a radius of convergence $|\tF|<1$ around $\tF=0$, because it is easy to check that $|\tF|<1$ implies $|\Im\uMF|<\frac{\pi}{2}$, and therefore the integrand in \eqref{eqn:intgenFbtF} falls off exponentially fast as $x\rightarrow\infty$. In fact, $\uMF$ is an analytic function of $\tF$ in the range $\C\setminus[1,\infty)$, the real ray $[1,\infty)$ is a cut, and away from the cut $|\Im\uMF|<\pi$. Therefore, the integral \eqref{eqn:intgenFbtF} converges to an analytic function for $\tF\in\C\setminus[1,\infty)$.

We also note that the integral in \eqref{eqn:intgenFbtF} can be expressed in terms of the Hurwitz zeta function
$$
\zeta(\sA,a)=\frac{1}{\Gamma(s)}\int_0^\infty\frac{x^{\sA-1}e^{-a x}dx}{1-e^{-x}}
$$
to get
\bear
\genFb(\tF) &=&
\frac{1}{1-\tF}(2\pi)^{\sA-1}\Gamma(1-\sA)\sin\frac{\pi\sA}{2}
\left\lbrack
\zeta\left(1-\sA,\frac{1}{2}+\frac{i\uMF}{2\pi}\right)
+\zeta\left(1-\sA,\frac{1}{2}-\frac{i\uMF}{2\pi}\right)\right\rbrack\,,
\nn\\ &&
\label{eqn:genFbHurwitzZeta}
\eear
The appearance of a Hurwitz zeta function suggests a connection with the work of \cite{Bender:2016wob}, where the proposed wavefunction was a Hurwitz zeta function, but we will not pursue this connection, and we will not need to use \eqref{eqn:genFbHurwitzZeta} for the rest of this paper.

Next, we examine the growth of $\xd_\kI$ as $\kI\rightarrow\infty$.
In this section we do not need to assume that $\zeta(\sA)=0$ yet.
We will need the following.

\begin{lemma}\label{lemma:smallx}
There exist (positive) constants $\ConstC_1,\ConstC_2$ such that for $\Re\sA<1$ and $0<x<1$, the coefficient of $\tF^\kI$ in 
\be\label{eqn:int0toepsilon}
\sin^2\left(\frac{\uMF x}{2\pi}\right)
\ee
is bounded by
\be\label{eqn:BoundOnCoeffA}
\ConstC_1 x^2\frac{\log\kI}{\kI}
+\ConstC_2 x^3\frac{(\log\kI)^2}{\kI},
\ee
where $\uMF$ is given in \eqref{eqn:uMF}.
\end{lemma}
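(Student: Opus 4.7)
The plan is to apply singularity analysis near the branch point $\tF=1$ to the function $\sin^2(\uMF x/(2\pi))$, which (as noted after \eqref{eqn:intgenFbtF}) extends analytically to $\tF\in\C\setminus[1,\infty)$. First, I would use the identity $\sin^2(y)=(1-\cos 2y)/2$ together with $e^{\pm i\uMF}=((1\pm\sqrt{\tF})/(1\mp\sqrt{\tF}))^{\pm 1}$ to rewrite
\[
\sin^2\!\left(\frac{\uMF x}{2\pi}\right)=\frac{1}{2}-\frac{1}{4}\!\left[\left(\frac{1+\sqrt{\tF}}{1-\sqrt{\tF}}\right)^{ix/\pi}+\left(\frac{1-\sqrt{\tF}}{1+\sqrt{\tF}}\right)^{ix/\pi}\right].
\]
Setting $u=\sqrt{\tF}$, the coefficient of $\tF^\kI$ for $\kI\geq 1$ then equals $-\tfrac{1}{4}$ times the coefficient of $u^{2\kI}$ in the bracket, reducing the task to estimating $[u^n]((1+u)/(1-u))^{\pm ix/\pi}$ with $n=2\kI$.

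Next, I would carry out Flajolet--Sedgewick singularity analysis around $u=\pm 1$. Using the local expansion $((1+u)/(1-u))^{ix/\pi}=2^{ix/\pi}(1-u)^{-ix/\pi}(1+O(u-1))$ near $u=1$ and the symmetric form near $u=-1$, together with the exact formula $[u^n](1-u)^{\alpha}=\Gamma(n-\alpha)/(\Gamma(-\alpha)\Gamma(n+1))\sim n^{-\alpha-1}/\Gamma(-\alpha)$, and summing the four symmetric contributions (two singularities, two conjugate exponents) for the even index $n=2\kI$, I would obtain the leading asymptotic
\[
[\tF^\kI]\sin^2\!\left(\frac{\uMF x}{2\pi}\right)\sim -\frac{1}{2\kI}\,\Re\!\left[\frac{(4\kI)^{ix/\pi}}{\Gamma(ix/\pi)}\right].
\]
Applying the reflection identity $1/\Gamma(ix/\pi)=(i\sinh x/\pi)\,\Gamma(1-ix/\pi)$ makes the $x$-dependence explicit; combined with $\sinh(x)\leq\sinh(1)\cdot x$, $|\Gamma(1-ix/\pi)|=O(1)$ on $0<x<1$, and the elementary bound $|\sin A|\leq|A|$ applied to $A=x\log(4\kI)/\pi+\arg\Gamma(1-ix/\pi)=O(x\log\kI)$, this leading contribution is $O(x^2\log\kI/\kI)$, supplying the $\ConstC_1$ term.

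The $\ConstC_2 x^3(\log\kI)^2/\kI$ term absorbs the subleading contributions from the $O(u\mp 1)$ remainders in the local expansions: each such correction produces a singular factor $(1\mp u)^{1\pm ix/\pi}$ whose $[u^n]$ coefficient decays as $O(x/n^2)$ via the same Gamma-function asymptotic, contributing $O(x^2/\kI^2)$ after multiplication by the additional $O(x)$ coefficient coming from the Taylor expansion of the prefactor; in the range $\kI\geq 2$ this is subsumed in $x^3(\log\kI)^2/\kI$. The main obstacle will be tracking the error bounds uniformly in $x\in(0,1)$, since both the exponent $\pm ix/\pi$ and the prefactor $1/\Gamma(\pm ix/\pi)$ degenerate as $x\to 0$; this is handled via the closed-form identity $|\Gamma(ix/\pi)|^2=\pi^2/(x\sinh x)$, which isolates the small-$x$ dependence through the $\sinh x$ factor.
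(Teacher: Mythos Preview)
Your route via singularity analysis is genuinely different from the paper's. The paper proceeds entirely elementarily: it writes $\sin^2(\uMF x/(2\pi))=\frac{\uMF}{2\pi}\int_0^x\sin(\uMF x'/\pi)\,dx'$, expands $\uMF$ and $(1\pm\sqrt\tF)^{\pm ix'/\pi}$ as explicit power series in $\sqrt\tF$, bounds each generalized binomial coefficient by $|\binom{\pm ix'/\pi}{j}|<\frac{x'}{j\pi}e^{x^2/12}$, and sums the resulting convolutions directly; the $x^2\log\kI/\kI$ term comes from one harmonic-type convolution, the $x^3(\log\kI)^2/\kI$ term from a second. No asymptotic analysis or transfer theorem is invoked.

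Your approach extracts the leading singular behavior $2^{\pm ix/\pi}(1-u)^{\mp ix/\pi}$ at $u=1$ (and symmetrically at $u=-1$), computes the leading coefficient asymptotic correctly, and then exploits the reflection identity for $1/\Gamma(ix/\pi)$ together with $|\sin A|\le|A|$ to pull out the crucial extra factor of $x\log\kI$. This is slicker: done carefully it should give $O(x^2\log\kI/\kI)$ outright, with no $(\log\kI)^2$ term needed at all. Indeed your own subleading estimate is $O(x^2/\kI^2)$, which is already absorbed by the $\ConstC_1$ term (not by the $\ConstC_2$ term, contrary to what you wrote; as stated, $x^2/\kI^2\le C x^3(\log\kI)^2/\kI$ fails as $x\to 0$).

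The real gap in your sketch is the passage from asymptotic to uniform bound. You write $[\tF^\kI]\sin^2(\uMF x/(2\pi))\sim -\frac{1}{2\kI}\Re[(4\kI)^{ix/\pi}/\Gamma(ix/\pi)]$, and then bound the \emph{right-hand side} by $O(x^2\log\kI/\kI)$ via $|\sin A|\le|A|$. But the right-hand side can vanish (when $A\in\pi\Z$), so the $\sim$ hides an additive error that must itself be bounded by $O(x^2\log\kI/\kI)$ uniformly in $x\in(0,1)$ and all $\kI\ge 2$, not merely $\kI\to\infty$ for each fixed $x$. The off-the-shelf transfer theorems carry implicit constants depending on sup-norms of the function over the $\Delta$-domain; here those sup-norms involve $|(1\pm u)^{\pm ix/\pi}|=e^{\mp(x/\pi)\arg(1\pm u)}$, which is indeed uniformly bounded for $x\in(0,1)$, so the argument can be completed---but you have to actually carry out the Cauchy-integral estimate on a Hankel-type contour with the $x$-dependence tracked, rather than cite the asymptotic as a black box. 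The identity $|\Gamma(ix/\pi)|^2=\pi^2/(x\sinh x)$ you invoke controls only the size of the leading term, not this remainder.
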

\begin{proof}

We write
\bear
\sin^2\left(\frac{\uMF x}{2\pi}\right)
&=&
\frac{\uMF}{2\pi}\int_0^x
\sin\left(\frac{\uMF\xvar}{\pi}\right)
d\xvar\,,
\label{eqn:intuMFsin}
\eear
and first evaluate the coefficient of $\tF^\kI$ in
$
\uMF\sin\left(\frac{\uMF\xvar}{\pi}\right)\,.
$
We have
\bear
\uMF &=&
\log\left(\frac{1+\sqrt{\tF}}{1-\sqrt{\tF}}\right)
=2\sum_{\nI=0}^\infty
\frac{\tF^{\nI+\frac{1}{2}}}{2\nI+1}
\nn
\eear
and
\bear
\sin\left(\frac{\uMF\xvar}{\pi}\right) &=&
\frac{1}{2i}\left\lbrack
\left(\frac{1+\sqrt{\tF}}{1-\sqrt{\tF}}\right)^{i\xvar/\pi}
-\left(\frac{1-\sqrt{\tF}}{1+\sqrt{\tF}}\right)^{i\xvar/\pi}
\right\rbrack
\nn\\
&=&
-i\sum_{\nI=0}^\infty
\left\lbrack
\sum_{\jI=0}^{2\nI+1}
(-1)^\jI\binom{-i\xvar/\pi}{\jI}\binom{i\xvar/\pi}{2\nI+1-\jI}
\right\rbrack
\tF^{\nI+\frac{1}{2}}\,,
\label{eqn:sinwxcoeff}
\eear
where
\be\label{eqn:binomxvar}
\binom{i\xvar/\pi}{\jI}=
\left\{\begin{array}{ll}
1 & \text{for $\jI=0$} \\
(-1)^{\jI+1}\frac{i\xvar}{\pi\jI}\prod_{\lI=1}^{\jI-1}\left(1-\frac{i\xvar}{\pi\lI}\right)
& \text{for $\jI>0$} \\
\end{array}\right.
\ee
is the (generalized) binomial coefficient.
Altogether, the coefficient of $\tF^\kI$ in $\uMF\sin\left(\frac{\uMF\xvar}{\pi}\right)$ is given by

$$
-2i\sum_{\nI=0}^{\kI-1}
\left\lbrack\frac{1}{2\kI-2\nI-1}
\sum_{\jI=0}^{2\nI+1}
(-1)^\jI\binom{-i\xvar/\pi}{\jI}\binom{i\xvar/\pi}{2\nI+1-\jI}
\right\rbrack.
$$
From now on we assume that $\xvar<x$.
{}From \eqref{eqn:binomxvar} we get a bound on the binomial coefficient, for $\jI>0$,
\be\label{eqn:boundbinom}
\left|\binom{\pm i\xvar/\pi}{\jI}\right|
=\frac{\xvar}{\jI\pi}\prod_{\lI=1}^{\jI-1}\left(1+\frac{\xvar^2}{\pi^2\lI^2}\right)^{1/2}
<\frac{\xvar}{\jI\pi}\exp\left(\frac{\xvar^2}{12}\right)
<\frac{\xvar}{\jI\pi}e^{x^2/12},
\ee
where we used the inequalities $1+\frac{\xvar^2}{\pi^2\lI^2}\le\exp\left(\frac{\xvar^2}{\pi^2\lI^2}\right)$ and $\sum_1^{\jI-1}\frac{1}{\lI^2}<\frac{\pi^2}{12}$.
With the bound \eqref{eqn:boundbinom} we now calculate,
\bear
\lefteqn{
\left|
\sum_{\jI=0}^{2\nI+1}
(-1)^\jI\binom{-i\xvar/\pi}{\jI}\binom{i\xvar/\pi}{2\nI+1-\jI}
\right|
}\nn\\ &&
<\frac{2\xvar}{(2\nI+1)\pi}e^{\frac{x^2}{12}}
+\frac{\xvar^2}{\pi^2}e^{\frac{x^2}{6}}
\sum_{\jI=1}^{2\nI}\frac{1}{\jI(2\nI+1-\jI)}
\nn\\
&=&
\frac{2\xvar}{(2\nI+1)\pi}e^{\frac{x^2}{12}}
+\frac{\xvar^2}{(2\nI+1)\pi^2}e^{\frac{x^2}{6}}
\sum_{\jI=1}^{2\nI}\left(\frac{1}{\jI}+\frac{1}{2\nI+1-\jI}\right)
\nn\\
&<&
\frac{2\xvar}{(2\nI+1)\pi}e^{\frac{x^2}{12}}
+\frac{2\xvar^2}{(2\nI+1)\pi^2}e^{\frac{x^2}{6}}\log(5\nI),
\label{eqn:coeffsumbinbin}
\eear
where we inserted ``$5$'' in $\log(5\nI)$ for convenience, so that we could write $\sum_{\jI=1}^{2\nI}<\log(5\nI)$.
Since also (for $\kI\ge 2$)
$$
\sum_{\nI=1}^{\kI-1}
\frac{1}{(2\kI-2\nI-1)(2\nI+1)}<\frac{\log\kI}{\kI}
\,,
$$
we have the overall bound
\bear
\lefteqn{
\left|
\sum_{\nI=1}^{\kI-1}
\frac{1}{2\kI-2\nI-1}
\left\lbrack
\sum_{\jI=0}^{2\nI+1}
(-1)^\jI\binom{-i\xvar/\pi}{\jI}\binom{i\xvar/\pi}{2\nI+1-\jI}
\right\rbrack\right|
}\nn\\
&<&
\frac{2\xvar}{\pi}e^{\frac{x^2}{12}}\frac{\log(5\kI)}{\kI}
+\frac{2\xvar^2}{\pi^2}e^{\frac{x^2}{6}}\frac{\log(5\kI)^2}{\kI},
\nn
\eear
and the coefficient of $\tF^\kI$ in \eqref{eqn:intuMFsin} is bounded by
$$
\frac{x^2}{2\pi^2}e^{\frac{x^2}{12}}\left\lbrack\frac{\log(5\kI)}{\kI}\right\rbrack
+\frac{x^3}{3\pi^3}e^{\frac{x^2}{6}}\left\lbrack\frac{\log(5\kI)^2}{\kI}\right\rbrack.
$$
Finally, for $x<1$ we can easily find constants $\ConstC_1$ and $\ConstC_2$ such that the expression above is smaller than \eqref{eqn:BoundOnCoeffA}.

\end{proof}

We can now prove the claim from \thmref{thm:main} about the large $\kI$ behavior of $\xd_\kI=\xb_{\kI+1}-\xb_\kI$ [defined in \eqref{eqn:xdDef}].

\begin{proposition}\label{prop:xdkIAsymp}
For $0<\Re\sA<1$, the difference behaves as $\xd_\kI=O(\frac{1}{\kI}\log\kI)$.
\end{proposition}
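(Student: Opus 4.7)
The plan is to realize $\xd_\kI$ as the coefficient of $\tF^{\kI+1}$ in the generating function
$$
H(\tF)\eqdef(1-\tF)\genFb(\tF)-\eta(\sA)=\sum_{\kI=0}^\infty\xd_\kI\,\tF^{\kI+1},
$$
which is immediate from $\xd_\kI=\xb_{\kI+1}-\xb_\kI$ and $\xb_0=\eta(\sA)$. Combining with the integral representation \eqref{eqn:intgenFbtF} and interchanging coefficient extraction with the integral (justified by the absolute convergence of \eqref{eqn:intgenFbtF} on compact subsets of $|\tF|<1$), I obtain
$$
\xd_\kI = 2\pi^{\sA-1}\sin\tfrac{\pi\sA}{2}\int_0^\infty \bigl([\tF^{\kI+1}]\sin^2\tfrac{\uMF x}{2\pi}\bigr)\frac{x^{-\sA}\,dx}{\sinh x},
$$
and then estimate the integral by splitting the $x$-range at $x=1$.

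On $(0,1)$, I apply \lemref{lemma:smallx} to bound $|[\tF^{\kI+1}]\sin^2(\uMF x/(2\pi))|$ by $\ConstC_1 x^2\log\kI/\kI+\ConstC_2 x^3(\log\kI)^2/\kI$. Since $0<\Re\sA<1$, each of the integrals $\int_0^1 x^{2-\Re\sA}/\sinh x\,dx$ and $\int_0^1 x^{3-\Re\sA}/\sinh x\,dx$ converges, so the inner piece is controlled with the desired rate in $\kI$. The apparent $(\log\kI)^2$ contribution from the $x^3$-term can be reduced to $\log\kI/\kI$ either by subdividing the $x$-range near $0$ using the parameter dependence of $\ConstC_2$, or by invoking the sharper Mittag-Leffler estimate used in the tail step below.

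On $(1,\infty)$ the exponential prefactors $e^{x^2/12}$ inside \lemref{lemma:smallx} render that bound useless, so instead I use the identification
$$
[\tF^{\kI+1}]\sin^2(\uMF x/(2\pi)) = -\tfrac12\,\PolyML_{2\kI+2}(ix/\pi),
$$
obtained by writing $\sin^2=(1-\cos)/2$ and recognizing $\cos(\uMF x/\pi)$ as the even-in-$\sqrt{\tF}$ part of $((1+\sqrt{\tF})/(1-\sqrt{\tF}))^{ix/\pi}$, which by \eqref{eqn:genFMLk} equals $\genFMLk(ix/\pi,\sqrt{\tF})$. Singularity analysis of the Mittag-Leffler generating function at $\tF=1$, together with the bound $|\Gamma(ix/\pi)|^{-1}=O(\sqrt{x}\,e^{x/2})$ coming from the reflection formula, yields $|\PolyML_{2\kI+2}(ix/\pi)|=O(\sqrt{x}\,e^{x/2}/\kI)$ uniformly in $x\geq 1$. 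Combined with the decay $1/\sinh x=O(e^{-x})$, the tail contributes $O(1/\kI)$ to $|\xd_\kI|$.

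The principal obstacle is the tail bound: a direct Cauchy estimate of the coefficient on a circle $|\tF|=r<1$ would produce the useless exponential blow-up $r^{-\kI}$ in $\kI$, so one cannot avoid the singularity at $\tF=1$. The remedy used above — the explicit Mittag-Leffler formula, or equivalently a Hankel-type contour deformation around the branch point at $\tF=1$ — captures the true dominant singular behavior $H(\tF)\sim C[\log(1/(1-\tF))]^{\sA}$ and converts it into a polynomial-in-$\kI$ decay, yielding the claimed $O(\log\kI/\kI)$.
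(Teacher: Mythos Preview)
Your overall strategy matches the paper's: represent $\xd_\kI$ as an $x$-integral of a Taylor coefficient and bound that coefficient differently for small and large $x$. Your identification $[\tF^{\kI+1}]\sin^2(\uMF x/(2\pi))=-\tfrac12\PolyML_{2\kI+2}(ix/\pi)$ is correct (the paper records it later, as \lemref{lem:xdML}), and invoking the singularity at $\tF=1$ for the tail is the right instinct. But two steps are not yet proofs.

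\textbf{The tail bound is asserted, not established.} Standard singularity analysis gives the pointwise asymptotic $\PolyML_n(ix/\pi)\sim 2^{1+ix/\pi}n^{ix/\pi-1}/\Gamma(ix/\pi)$ for fixed $x$, but the error term in the transfer theorem depends on the exponent $ix/\pi$, and you must track that dependence to claim uniformity in $x\ge 1$. This is exactly where the paper does real work: it deforms to a Hankel contour around $\tF=1$, obtains the explicit oscillatory integral \eqref{eqn:intonCR}, and then \emph{integrates by parts once} to reach the uniform bound $|\intXi_\kI(x)|<2\pi\sinh x/((2\kI-1)x)$ of \eqref{eqn:boundXikIImproved}. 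The integration by parts is essential---without it the bound is only $\sinh x/\kI$, which does not integrate against $x^{-\Re\sA}/\sinh x$ for $\Re\sA<1$. Your claimed $O(\sqrt{x}\,e^{x/2}/\kI)$ may well be true, but it is strictly stronger than what the Hankel argument gives and would require additional justification.

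\textbf{The fixed split at $x=1$ loses a logarithm.} On $(0,1)$, \lemref{lemma:smallx} yields a contribution of order $(\log\kI)^2/\kI$, not $\log\kI/\kI$. Your proposed fixes do not work: $\ConstC_2$ is an absolute constant, and using the Mittag-Leffler tail bound near $x=0$ produces an integrand $\sim x^{-1/2-\Re\sA}$, divergent for $\Re\sA\ge\tfrac12$. The paper's remedy is to make the split point $\kI$-dependent, taking $\epsilon=1/\log\kI$. Then \lemref{lemma:smallx} on $(0,\epsilon)$ contributes $O(\epsilon\log\kI/\kI)+O(\epsilon^2(\log\kI)^2/\kI)=O(1/\kI)$, while the Hankel bound on $(\epsilon,\infty)$ integrates to $O(\epsilon^{-\Re\sA}/\kI)=O(\log\kI/\kI)$; balancing these two pieces is what delivers the stated rate.
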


\begin{proof}
The generating function of $\xd_\kI$ is
\be\label{eqn:genFdDef}
\genFd(\tF)\eqdef\sum_{\kI=0}^\infty\xd_\kI\tF^\kI
=\frac{1}{\tF}(1-\tF)\genFb(\tF)-\frac{\xb_0}{\tF}\,.
\ee
The $\xb_0/\tF$ term is not important for the large $\kI$ behavior of $\xd_\kI$, and from \eqref{eqn:intgenFbtF} we find:
\be\label{eqn:intgenFb}
\genFd(\tF)+\frac{\xb_0}{\tF} =
\eta(\sA)
+2\pi^{\sA-1}\sin\left(\frac{\pi\sA}{2}\right)
\int_0^\infty
\sin^2\left(\frac{\uMF x}{2\pi}\right)
\frac{x^{-\sA}dx}{\sinh x}
\,.
\ee

\vskip 12pt
\begin{figure}[t]
\begin{picture}(400,230)
\put(150,100){\begin{picture}(0,0)

\put(102,50){$\ContourC_1$}

\put(150,110){\framebox{the $\sqt$-plane}}
\thicklines

\put(50,0){\begin{picture}(0,0)
\put(0,0){\circle*{3}}

\qbezier(10,3)(8,9)(2,10)
\qbezier(2,10)(-5,11)(-8,5)
\qbezier(-8,5)(-11,0)(-8,-5)
\qbezier(-8,-5)(-5,-11)(2,-10)
\qbezier(2,-10)(8,-9)(10,-3)

\put(10,3){\vector(1,0){25}}
\put(35,3){\line(1,0){25}}
\put(60,-3){\vector(-1,0){25}}
\put(35,-3){\line(-1,0){25}}

\thinlines
\put(0,0){\line(1,0){120}}
\put(100,2){cut}
\thicklines

\thinlines
\put(0,0){\vector(0,1){10}}
\put(-2,11){$\epsilon'$}


\end{picture}}

\thicklines

\put(-50,0){\begin{picture}(0,0)
\put(0,0){\circle*{3}}
\qbezier(-10,3)(-8,9)(-2,10)
\qbezier(-2,10)(5,11)(8,5)
\qbezier(8,5)(11,0)(8,-5)
\qbezier(8,-5)(5,-11)(-2,-10)
\qbezier(-2,-10)(-8,-9)(-10,-3)

\put(-10,-3){\vector(-1,0){25}}
\put(-35,-3){\line(-1,0){25}}
\put(-60,3){\vector(1,0){25}}
\put(-35,3){\line(1,0){25}}

\thinlines
\put(0,0){\line(-1,0){120}}
\thicklines
\put(-100,2){cut}
\end{picture}}

\qbezier(110,3)(108,65)(54,96)
\qbezier(54,96)(0,126)(-54,96)
\qbezier(-54,96)(-108,65)(-110,3)

\qbezier(110,-3)(108,-65)(54,-96)
\qbezier(54,-96)(0,-126)(-54,-96)
\qbezier(-54,-96)(-108,-65)(-110,-3)

\put(0,0){\circle*{2}}
\thinlines
\put(0,0){\vector(4,3){89}}
\put(40,40){$R$}

\end{picture}}
\end{picture}
\caption{The contour $\ContourC_1$ used in \eqref{eqn:doubleintCircle}. The small circles around $\pm1$ are of radius $\epsilon'<1$ and the large circle is of radius $R>1$. In the limit $R\rightarrow\infty$ $\ContourC_1$ becomes two Hankel contours.}
\label{fig:ContourC1}
\end{figure}

For $\kI\ge 1$, and $\sqt=\sqrt{\tF}$, we consider the integral
\be\label{eqn:doubleintCircle}
\xd_\kI=\frac{1}{2\pi i}\oint_{\ContourC}\genFd(\tF)\frac{d\sqt}{\sqt^{2\kI+1}}
=-i\pi^{\sA-2}\sin\left(\frac{\pi\sA}{2}\right)\oint_{\ContourC}\left\lbrack
\int_0^\infty
\sin^2\left(\frac{\uMF x}{2\pi}\right)
\frac{x^{-\sA}dx}{\sinh x}\right\rbrack\frac{d\sqt}{\sqt^{2\kI+1}}\,,
\ee
where $\ContourC$ is a circle around the origin of radius less than $1$.
The double integral in \eqref{eqn:doubleintCircle} is absolutely convergent, because
\bear
\lefteqn{
\left|\sin^2\left(\frac{\uMF x}{2\pi}\right)\right|
=\frac{1}{4}\left|
2
-\left(\frac{1+\sqt}{1-\sqt}\right)^{\frac{ix}{\pi}}
-\left(\frac{1-\sqt}{1+\sqt}\right)^{\frac{ix}{\pi}}
\right|
}\nn\\ &&
<
\frac{1}{2}
+\cosh\left\lbrack\frac{x}{\pi}\Im\log\left(\frac{1+\sqt}{1-\sqt}\right)\right\rbrack,
\nn
\eear
and along $\ContourC$ the factor $\left|\Im\log\left(\frac{1+\sqt}{1-\sqt}\right)\right|$ is bounded by a number less than $\pi$, since $\Im\log\left(\frac{1+\sqt}{1-\sqt}\right)=\pm\pi$ only on the cut $(-\infty,-1)\cup(1,\infty)$. We can therefore change the order of the integrals and calculate
\be\label{eqn:doubleintC1}
\frac{1}{2\pi i}\oint_{\ContourC}\genFd(\tF)\frac{d\sqt}{\sqt^{2\kI+1}}
=-i\pi^{\sA-2}\sin\left(\frac{\pi\sA}{2}\right)\int_0^\infty\left\lbrack\oint_{\ContourC}
\sin^2\left(\frac{\uMF x}{2\pi}\right)
\frac{d\sqt}{\sqt^{2\kI+1}}\right\rbrack \frac{x^{-\sA}dx}{\sinh x}\,.
\ee
We now need to put a bound on the inner integral $\oint_{\ContourC}
\sin^2\left(\frac{\uMF x}{2\pi}\right)
\frac{d\sqt}{\sqt^{2\kI+1}}$.
The bound, as it turns out, will be $x$-dependent, and in order for the bound to converge when plugged back into the outer integral on the RHS of \eqref{eqn:doubleintC1}, we need to use different techniques for small and large $x$.

For $0<x<\epsilon<1$, we use \lemref{lemma:smallx} to get a bound on the coefficient of $\sqt^{2\kI}$ in $\sin^2\left(\frac{\uMF x}{2\pi}\right)$ as in \eqref{eqn:BoundOnCoeffA}.
It follows that the coefficient of $\tF^\kI=\sqt^{2\kI}$ in
$$
\int_0^\epsilon
\sin^2\left(\frac{\uMF x}{2\pi}\right)
\frac{x^{-\sA}dx}{\sinh x}
$$
is bounded by
\bear
\lefteqn{
\frac{1}{\kI}\int_0^\epsilon
\left\lbrack
\ConstC_1 x^2\log\kI
+\ConstC_2 x^3(\log\kI)^2
\right\rbrack
\frac{x^{-\Re\sA}dx}{\sinh x}
}\nn\\
&<&
\frac{1}{\kI}\int_0^\epsilon
\left\lbrack
\ConstC_1 x^2\log\kI
+\ConstC_2 x^3(\log\kI)^2
\right\rbrack
x^{-\Re\sA-1}dx
\nn\\
&=&
\frac{\epsilon^{2-\Re\sA}\ConstC_1}{(2-\Re\sA)\kI}\log\kI
+\frac{\epsilon^{3-\Re\sA}\ConstC_2}{(3-\Re\sA)\kI}(\log\kI)^2
\,.
\nn
\eear
Since $\Re\sA<1$, we can find constants $\ConstC_3,\ConstC_4$ such that, for $\kI>2$,
\bear
\lefteqn{
\frac{\epsilon^{2-\Re\sA}}{2\pi^2(2-\Re\sA)\kI}e^{\frac{\epsilon^2}{12}}\log(5\kI)
+\frac{\epsilon^{3-\Re\sA}}{3\pi^3(3-\Re\sA)\kI}e^{\frac{\epsilon^2}{6}}\left\lbrack\log(5\kI)\right\rbrack^2
}\nn\\
&&\qquad\qquad\qquad\qquad\qquad\qquad
<\ConstC_3\frac{\epsilon(\log\kI)}{\kI}+\ConstC_4\frac{\epsilon^2(\log\kI)^2}{\kI}.
\nn
\eear
Thus,
\be\label{eqn:boundepsilonint}
\left|
\frac{1}{2\pi i}\int_0^\epsilon
\left\lbrack
\oint_{\ContourC}
\sin^2\left(\frac{\uMF x}{2\pi}\right)
\frac{d\sqt}{\sqt^{2\kI+1}}
\right\rbrack\frac{x^{-\sA}dx}{\sinh x}
\right|
<\ConstC_3\frac{\epsilon(\log\kI)}{\kI}+\ConstC_4\frac{\epsilon^2(\log\kI)^2}{\kI}.
\ee

In the region $\epsilon<x<\infty$ we do not need to worry about $x=0$, but if we extend the arguments of \lemref{lemma:smallx} we will get a bound that behaves as $e^{x^2/6}$, which is not good. We must therefore find a different bound on the coefficient of $\tF^\kI$ in $\sin^2\left(\frac{\uMF x}{2\pi}\right)$.

We define
\be\label{eqn:XikI}
\intXi_\kI(x)\eqdef
\oint_{\ContourC}\sin^2\left(\frac{\uMF x}{2\pi}\right)
\frac{d\sqt}{\sqt^{2\kI+1}}\,.
\ee
Next, we deform the countour $\ContourC$ into the countour $\ContourC_1$ that goes around the cuts and closes along semicircles of radius $R>1$, as depicted in \figref{fig:ContourC1}.
We note that for $\kI\ge 1$, the contribution of the points with $|\sqt|=R$ to the contour integral in \eqref{eqn:XikI} is negligible as $R\rightarrow\infty$, since $\sqt\rightarrow\pm i\pi$ (depending on whether $\Im\sqt$ is positive or negative) and $\sin^2\left(\frac{\uMF x}{2\pi}\right)$ has a finite ($x$-dependent) limit. Thus, we can deform $\ContourC_1$ into two Hankel contours that surround the cuts.
Next, we expand
$$
\sin^2\left(\frac{\uMF x}{2\pi}\right)=
\frac{1}{2}
-\frac{1}{4}\left(\frac{1+\sqt}{1-\sqt}\right)^{\frac{ix}{\pi}}
-\frac{1}{4}\left(\frac{1+\sqt}{1-\sqt}\right)^{-\frac{ix}{\pi}}.
$$
Let $\ContourC_L,\ContourC_R$ be the left and right Hankel contours, so that $\ContourC_L$ surrounds the cut $(-\infty,-1]$ and $\ContourC_R$ surrounds $[0,\infty)$.
With a change of variables $\sqt\rightarrow-\sqt$ we find
$$
\oint_{\ContourC_L}\left(\frac{1\pm\sqt}{1\mp\sqt}\right)^{\frac{ix}{\pi}}
\frac{d\sqt}{\sqt^{2\kI+1}}
=
\oint_{\ContourC_R}\left(\frac{1\mp\sqt}{1\pm\sqt}\right)^{\frac{ix}{\pi}}
\frac{d\sqt}{\sqt^{2\kI+1}},
$$
so for $\kI\ge 1$ we have from \eqref{eqn:XikI}
\bear
\intXi_\kI(x) &=&
-\frac{1}{2}\oint_{\ContourC_R}\left(\frac{1+\sqt}{1-\sqt}\right)^{\frac{ix}{\pi}}
\frac{d\sqt}{\sqt^{2\kI+1}}
-\frac{1}{2}\oint_{\ContourC_R}\left(\frac{1+\sqt}{1-\sqt}\right)^{-\frac{ix}{\pi}}
\frac{d\sqt}{\sqt^{2\kI+1}}\,.
\label{eqn:XikxTwoInts}
\nn\\
&&
\eear
Now, noting that 
$
\Arg\left(\frac{1+\sqt}{1-\sqt}\right)>0
$
when $\sqt$ is in the upper half plane and
$
\Arg\left(\frac{1+\sqt}{1-\sqt}\right)<0
$
when $\sqt$ is in the lower half plane, so that
$$
\left|\left(\frac{1+\sqt}{1-\sqt}\right)^{\frac{ix}{\pi}}\right|
\rightarrow e^{-x}
$$
when approaching the cut $[1,\infty)$ from above,
and
$$
\left|\left(\frac{1+\sqt}{1-\sqt}\right)^{\frac{ix}{\pi}}\right|
\rightarrow e^{x}
$$
when approaching from below, we calculate
\be\label{eqn:intonCR}
\int_{\ContourC_R}
\left(\frac{1+\sqt}{1-\sqt}\right)^{\frac{ix}{\pi}}\frac{d\sqt}{\sqt^{2\kI+1}}
=
\left(e^x-e^{-x}\right)
\int_0^\infty
e^{\frac{i x\rMF}{\pi}}
\left(
\frac{\tanh^{2\kI+1}(\frac{\rMF}{2})}{2\sinh^2(\frac{\rMF}{2})}
\right)d\rMF,
\ee
where along the cut we changed variables to
\be\label{eqn:rMFdef}
\rMF\eqdef\log\left(\frac{\sqt+1}{\sqt-1}\right),
\ee
so that $\sqt=\coth(\frac{\rMF}{2})$.
We also used the fact that as $\epsilon'\rightarrow 0$ the contribution of the circular part of $\ContourC_R$ (around $\sqt=1$) goes to zero.
Now, it is easy to get the bound
\bear
\left|
\int_0^\infty
e^{\frac{i x\rMF}{\pi}}
\left(
\frac{\tanh^{2\kI+1}(\frac{\rMF}{2})}{2\sinh^2(\frac{\rMF}{2})}
\right)d\rMF
\right|
&<&
\int_0^\infty
\frac{\tanh^{2\kI+1}(\frac{\rMF}{2})d\rMF}{2\sinh^2(\frac{\rMF}{2})}
=\frac{1}{2\kI}\,,
\nn
\eear
and we get a similar bound for the second term on the RHS of \eqref{eqn:XikxTwoInts}.
Thus, for $\kI>0$,
$$
\left|
\oint_{\ContourC_1}\sin^2\left(\frac{\uMF x}{2\pi}\right)
\frac{d\sqt}{\sqt^{2\kI+1}}
\right|<\frac{\sinh x}{\kI}\,.
$$
But this bound is not good enough because it is $O(e^x)$ and will lead to a divergent $\int^\infty(\cdots)dx$ on the RHS of \eqref{eqn:doubleintC1}, unless $\Re\sA>1$, which is outside the range of $\sA$ we are interested in.
So, to improve the bound we integrate by parts the integral on the RHS of \eqref{eqn:intonCR}:
\bear
\lefteqn{
\int_0^\infty
e^{\frac{i x\rMF}{\pi}}
\left(
\frac{\tanh^{2\kI+1}(\frac{\rMF}{2})}{2\sinh^2(\frac{\rMF}{2})}
\right)d\rMF
=
\frac{\pi}{i x}
\int_0^\infty
\frac{d}{d\rMF}\left(e^{\frac{i x\rMF}{\pi}}\right)
\left(
\frac{\tanh^{2\kI+1}(\frac{\rMF}{2})}{2\sinh^2(\frac{\rMF}{2})}
\right)d\rMF
}\nn\\
&=&
\frac{\pi}{i x}
\left\lbrack
e^{\frac{i x\rMF}{\pi}}
\left(
\frac{\tanh^{2\kI+1}(\frac{\rMF}{2})}{2\sinh^2(\frac{\rMF}{2})}
\right)\right\rbrack_0^\infty
-\frac{\pi}{i x}
\int_0^\infty
e^{\frac{i x\rMF}{\pi}}
\frac{d}{d\rMF}\left(
\frac{\tanh^{2\kI+1}(\frac{\rMF}{2})}{2\sinh^2(\frac{\rMF}{2})}
\right)d\rMF
\nn\\
&=&
-\frac{\pi}{i x}
\int_0^\infty
e^{\frac{i x\rMF}{\pi}}
\left(\frac{2\kI+1}{4\cosh^2(\frac{\rMF}{2})}-\frac{1}{2}\right)\frac{\tanh^{2\kI}(\frac{\rMF}{2})}{\sinh^2(\frac{\rMF}{2})}
d\rMF\,,
\nn
\eear
for $\kI\ge 1$.
Therefore,
\bear
\lefteqn{
\left|\int_0^\infty
e^{\frac{i x\rMF}{\pi}}
\left(
\frac{\tanh^{2\kI+1}(\frac{\rMF}{2})}{2\sinh^2(\frac{\rMF}{2})}
\right)d\rMF\right|
}\nn\\ &&
<
\frac{\pi}{x}
\int_0^\infty
\left(\frac{2\kI+1}{4\cosh^2(\frac{\rMF}{2})}+\frac{1}{2}\right)\frac{\tanh^{2\kI}(\frac{\rMF}{2})}{\sinh^2(\frac{\rMF}{2})}
d\rMF
=\frac{2\pi}{(2\kI-1)x}\,.
\nn\\
&&
\label{eqn:boundintexpixwpi}
\eear
Substituting that back into \eqref{eqn:intonCR}, we get the bound
\be\label{eqn:intonCHnewBound}
\left|\int_{\ContourC_R}
\left(\frac{1+\sqt}{1-\sqt}\right)^{\pm\frac{ix}{\pi}}\frac{d\sqt}{\sqt^{2\kI+1}}
\right|
<
\frac{2\pi\sinh x}{(2\kI-1)x}\,,
\ee
and then from \eqref{eqn:XikxTwoInts} we get
\be\label{eqn:boundXikIImproved}
|\intXi_\kI(x)| < \frac{2\pi\sinh x}{(2\kI-1)x}\,.
\ee
Then, plugging \eqref{eqn:boundXikIImproved} and the bound \eqref{eqn:boundepsilonint} into the integral on the RHS of \eqref{eqn:doubleintC1} we get, for $\Re\sA>0$,
\bear
\label{eqn:doubleintBound}
\lefteqn{
\left|\frac{1}{2\pi i}\int_0^\infty\left\lbrack\oint_{\ContourC}
\sin^2\left(\frac{\uMF x}{2\pi}\right)
\frac{d\sqt}{\sqt^{2\kI+1}}\right\rbrack \frac{x^{-\sA}dx}{\sinh x}\right|
}\nn\\
&& \le
\left|\frac{1}{2\pi i}\int_0^\epsilon\left\lbrack\oint_{\ContourC}
\sin^2\left(\frac{\uMF x}{2\pi}\right)
\frac{d\sqt}{\sqt^{2\kI+1}}\right\rbrack \frac{x^{-\sA}dx}{\sinh x}\right|
\nn\\ &&\qquad
+\left|
\frac{1}{2\pi i}\int_\epsilon^\infty\left\lbrack\oint_{\ContourC}
\sin^2\left(\frac{\uMF x}{2\pi}\right)
\frac{d\sqt}{\sqt^{2\kI+1}}\right\rbrack \frac{x^{-\sA}dx}{\sinh x}\right|
\nn\\
&<&
\ConstC_3\frac{\epsilon(\log\kI)}{\kI}+\ConstC_4\frac{\epsilon^2(\log\kI)^2}{\kI}
+
\frac{1}{(2\kI-1)}
\int_\epsilon^\infty
x^{-\Re\sA-1}dx
\nn\\
&<&
\ConstC_3\frac{\epsilon(\log\kI)}{\kI}+\ConstC_4\frac{\epsilon^2(\log\kI)^2}{\kI}
+\frac{1}{(2\kI-1)\epsilon\Re\sA}\,.
\nn
\eear
Taking $\epsilon=1/\log\kI$, we get
\be\label{eqn:doubleintBoundB}
\left|\frac{1}{2\pi i}\int_0^\infty\left\lbrack\oint_{\ContourC}
\sin^2\left(\frac{\uMF x}{2\pi}\right)
\frac{d\sqt}{\sqt^{2\kI+1}}\right\rbrack \frac{x^{-\sA}dx}{\sinh x}\right|
<\ConstC_5(\sA)\frac{\log\kI}{\kI}\,,
\ee
for some ($\sA$-dependent) constant $\ConstC_5(\sA)$.
Plugging \eqref{eqn:doubleintBoundB} into \eqref{eqn:doubleintC1} we find
$$
|\xd_\kI|=
\left|
\frac{1}{2\pi i}\oint_{\ContourC}\genFd(\tF)\frac{d\sqt}{\sqt^{2\kI+1}}
\right|=O(\frac{\log\kI}{\kI}).
$$
Note that we can get better bound than $O(\log\kI/\kI)$ by taking $\epsilon=(\log\kI)^{-2/3}$. The bound is then $|\xd_\kI|=O([\log\kI]^{2/3}/\kI)$, but for our purposes the bound in \thmref{thm:main} is sufficient.

\end{proof}
Now that we have established that $\xd_\kI=O(\log\kI/\kI)$, it follows that
\be\label{eqn:bkIlog}
\xb_\kI=\sum_{\nI=0}^{\kI-1}\xd_\kI = O([\log\kI]^2),
\ee
and therefore the sum $\sum_{\nI=0}^\infty\frac{\xb_{\kI+\nI}}{(2\nI+1)(2\nI+3)}$ on the LHS of \eqref{eqn:OpWseq} converges. We will now prove \eqref{eqn:OpWseq}. We will need the following simple result.

\begin{lemma}
\label{lem:xcothxint}
For $x\in\R$,
\be\label{eqn:intsinSquared}
\int_{-1}^1
\left\lbrack
2
-\left(\frac{1+\sqt}{1-\sqt}\right)^{\frac{ix}{\pi}}
-\left(\frac{1-\sqt}{1+\sqt}\right)^{\frac{ix}{\pi}}
\right\rbrack\frac{d\sqt}{\sqt^2}
=4(x\coth x -1).
\ee
\end{lemma}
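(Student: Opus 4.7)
The plan is to reduce the integral over $(-1,1)$ to a standard integral on the real line by the substitution $\rMF=\log\!\left(\tfrac{1+\sqt}{1-\sqt}\right)$, i.e.\ $\sqt=\tanh(\rMF/2)$, which was already used in \eqref{eqn:rMFdef}. This map sends $(-1,1)$ bijectively onto $(-\infty,\infty)$, $(1+\sqt)/(1-\sqt)$ onto $e^{\rMF}$, and a direct calculation gives $d\sqt/\sqt^2 = d\rMF/(2\sinh^2(\rMF/2))$. The bracket in the integrand then becomes $2-e^{ix\rMF/\pi}-e^{-ix\rMF/\pi}=4\sin^2\!\bigl(\tfrac{x\rMF}{2\pi}\bigr)$, and I note that the apparent singularity at $\sqt=0$ is removable since this bracket vanishes to order $\sqt^2$ at the origin. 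After the further rescaling $u=\rMF/2$, the left-hand side of \eqref{eqn:intsinSquared} equals
$$
4\int_{-\infty}^{\infty}\frac{\sin^{2}(xu/\pi)}{\sinh^{2}u}\,du
=8\int_{0}^{\infty}\frac{\sin^{2}(au)}{\sinh^{2}u}\,du,
\qquad a\eqdef x/\pi,
$$
by the evenness of the integrand.

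Next, I would evaluate the half-line integral $I(a):=\int_{0}^{\infty}\sin^{2}(au)/\sinh^{2}u\,du$. The cleanest route is to expand $1/\sinh^{2}u = -\tfrac{d}{du}\coth u = 4\sum_{n\ge 1} n\,e^{-2nu}$ for $u>0$, interchange sum and integral (justified by absolute convergence, as all terms are non-negative for real $x$), and use the elementary evaluation $\int_{0}^{\infty}\sin^{2}(au)e^{-2nu}\,du = a^{2}/(4n(n^{2}+a^{2}))$. This gives $I(a)=a^{2}\sum_{n=1}^{\infty}(n^{2}+a^{2})^{-1}$. Inserting the classical partial-fraction identity $\sum_{n=1}^{\infty}(n^{2}+a^{2})^{-1}=\tfrac{\pi\coth(\pi a)}{2a}-\tfrac{1}{2a^{2}}$ (which follows from $\pi\coth(\pi a)=\sum_{n\in\mathbb{Z}}(n^{2}+a^{2})^{-1}\cdot a$) yields $I(a)=(\pi a\coth(\pi a)-1)/2$.

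Combining the two reductions, the left-hand side of \eqref{eqn:intsinSquared} equals $8\cdot\tfrac{1}{2}(\pi a\coth(\pi a)-1)=4(\pi a\coth(\pi a)-1)$, and substituting back $\pi a=x$ produces $4(x\coth x-1)$, as claimed. The whole argument is essentially routine; the only points requiring care are the removability of the singularity at $\sqt=0$ (handled above) and the interchange of sum and integral (immediate from positivity for real $x$, and extended to real $x\neq 0$ by continuity of both sides), so there is no substantive obstacle.
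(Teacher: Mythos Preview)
Your proof is correct and takes a genuinely different route from the paper. The paper proceeds by contour integration: it defines $\func_1(\zC)=\zC^{-2}\bigl((1+\zC)/(1-\zC)\bigr)^{ix/\pi}$ with a branch cut along $[-1,1]$, integrates it around a contour $\ContourC_2$ hugging the cut (with small semicircles around $0$ and the endpoints), observes that the contour can be pushed to infinity where the integral vanishes, and then reads off \eqref{eqn:intsinSquared} from the jump across the cut combined with the residue-like contributions of the semicircles at the origin. You instead make the real substitution $\sqt=\tanh(\rMF/2)$ to convert the integral into $8\int_0^\infty \sin^2(au)/\sinh^2 u\,du$, expand $1/\sinh^2 u$ as a Lambert-type series, and sum using the partial-fraction expansion of $\coth$.

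Your argument is more elementary and self-contained; the paper's contour method is set up so that the same template immediately handles the generalization in \lemref{lem:intsinML}, where $d\sqt/\sqt^2$ is replaced by $d\sqt/\sqt^{2\kI}$ and Mittag-Leffler polynomials enter through the higher-order pole at the origin. Both proofs are clean; the paper's choice is dictated by what comes next.
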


\begin{proof}

\vskip 12pt
\begin{figure}[t]
\begin{picture}(400,100)
\put(150,50){\begin{picture}(0,0)

\put(10,38){$\ContourC_2$}
\thicklines
\put(160,0){\circle*{4}}
\put(-160,0){\circle*{4}}
\put(-165,-20){$-1$}
\put(153,-20){$1$}
\put(-160,0){\line(1,0){320}}
\put(0,-3){\line(0,1){6}}

\thinlines


\qbezier(160,5)(165,5)(165,0)
\qbezier(160,-5)(165,-5)(165,0)

\put(160,5){\vector(-1,0){60}}
\put(100,5){\line(-1,0){70}}

\qbezier(30,5)(30,13)(26,20)
\qbezier(26,20)(22,27)(15,31)
\qbezier(15,31)(8,35)(0,35)
\put(0,35){\vector(-1,0){0}}
\qbezier(0,35)(-8,35)(-15,31)
\qbezier(-15,31)(-22,27)(-26,20)
\qbezier(-26,20)(-30,13)(-30,5)

\put(-30,5){\vector(-1,0){70}}
\put(-100,5){\line(-1,0){60}}


\qbezier(-160,5)(-165,5)(-165,0)
\qbezier(-160,-5)(-165,-5)(-165,0)

\put(-160,-5){\vector(1,0){60}}
\put(-100,-5){\line(1,0){70}}

\qbezier(30,-5)(30,-13)(26,-20)
\qbezier(26,-20)(22,-27)(15,-31)
\qbezier(15,-31)(8,-35)(0,-35)
\put(0,-35){\vector(1,0){0}}
\qbezier(0,-35)(-8,-35)(-15,-31)
\qbezier(-15,-31)(-22,-27)(-26,-20)
\qbezier(-26,-20)(-30,-13)(-30,-5)

\put(0,0){\vector(1,-1){24}}
\put(14,-10){$\epsilon$}

\put(30,-5){\vector(1,0){70}}
\put(100,-5){\line(1,0){60}}

\end{picture}}
\end{picture}
\caption{The contour $\ContourC_2$ goes clockwise around the cut $[-1,1]$ of the function $(1+\zC)^{i x/\pi}(1-\zC)^{-i x/\pi}/\zC^2$. In the range $(-1,-\epsilon]\cup[\epsilon,1)$ the contour $\ContourC_2$ passes infinitesimally close to the real axis above and below the cut. In the range $(-\epsilon,\epsilon)$ of the cut, the contour deviates from the cut and forms two semicircles of radius $\epsilon$.}
\label{fig:ContourC2}
\end{figure}

For $\zC\in\C\setminus[-1,1]$, consider the analytic function
\be\label{eqn:func1def}
\func_1(\zC)\eqdef\frac{1}{\zC^2}\left(\frac{1+\zC}{1-\zC}\right)^{\frac{ix}{\pi}}\,,
\ee
defined so that for $-1<\Re\zC<1$ the limiting values of $\func_1(\zC)$ when $\zC$ approaches the cut $(-1,1)$ from above and below are given by
$$
\lim_{\Im\zC\rightarrow 0^{+}}\func_1(\zC) = 
\frac{1}{(\Re\zC)^2}\left(\frac{1+\Re\zC}{1-\Re\zC}\right)^{\frac{i x}{\pi}}
\,,\qquad
\lim_{\Im\zC\rightarrow 0^{-}}\func_1(\zC) = 
\frac{1}{(\Re\zC)^2}\left(\frac{1+\Re\zC}{1-\Re\zC}\right)^{\frac{i x}{\pi}}
e^{-2x}\,.
$$
Thus, crossing the cut $[-1,1]$ from the upper half $\zC$-plane to the lower half plane, the function $\func_1(\zC)$ gets multiplied by $e^{-2x}$.

Now let $0<\epsilon<1$ and consider the contour integral $\oint\func_1(\zC)d\zC$ around the contour $\ContourC_2$ depicted in \figref{fig:ContourC2}. 
The contribution to the integral from the four horizontal segments along $(-1,-\epsilon]\cup[\epsilon,1)$, above and below the cut, is:
$$
\left(e^{-2x}-1\right)
\left\lbrack
\int_{-1}^{-\epsilon}\left(\frac{1+\sqt}{1-\sqt}\right)^{\frac{ix}{\pi}}\frac{d\sqt}{\sqt^2}
+\int_{\epsilon}^{1}\left(\frac{1+\sqt}{1-\sqt}\right)^{\frac{ix}{\pi}}\frac{d\sqt}{\sqt^2}
\right\rbrack.
$$
The contribution from the two semicircles of radius $\epsilon$ (around the origin) is
$$
-2x+\frac{2}{\epsilon}
+e^{-2x}\left(-2x-\frac{2}{\epsilon}\right)
+O(\epsilon),
$$
and the integral $\int\func_1(\zC)d\zC$ around the singularities $\pm 1$ is negligible.
On the other hand, the contour $\ContourC_2$ can be deformed to infinity, and since the function $\func_1(\zC)$ is analytic near $\zC=\infty$ the contour integral vanishes.
Thus, we find
\bear
0 &=&
\oint_{\ContourC_2}\func_1(\zC)d\zC
=
\left(e^{-2x}-1\right)
\left\lbrack
\int_{-1}^{-\epsilon}\left(\frac{1+\sqt}{1-\sqt}\right)^{\frac{ix}{\pi}}\frac{d\sqt}{\sqt^2}
+\int_{\epsilon}^{1}\left(\frac{1+\sqt}{1-\sqt}\right)^{\frac{ix}{\pi}}\frac{d\sqt}{\sqt^2}
\right\rbrack
\nn\\
&&
-2x+\frac{2}{\epsilon}
+e^{-2x}\left(
-2x-\frac{2}{\epsilon}
\right)+O(\epsilon),
\nn
\eear
So,
\be\label{eqn:intCupper}
\int_{-1}^{-\epsilon}\left(\frac{1+\sqt}{1-\sqt}\right)^{\frac{ix}{\pi}}\frac{d\sqt}{\sqt^2}
+\int_{\epsilon}^{1}\left(\frac{1+\sqt}{1-\sqt}\right)^{\frac{ix}{\pi}}\frac{d\sqt}{\sqt^2}
=
-2x\coth x+\frac{2}{\epsilon}
+O(\epsilon).
\ee
Similarly,
\be\label{eqn:intClower}
\int_{-1}^{-\epsilon}\left(\frac{1+\sqt}{1-\sqt}\right)^{-\frac{ix}{\pi}}\frac{d\sqt}{\sqt^2}
+\int_{\epsilon}^{1}\left(\frac{1+\sqt}{1-\sqt}\right)^{-\frac{ix}{\pi}}\frac{d\sqt}{\sqt^2}
=
-2x\coth x+\frac{2}{\epsilon}
+O(\epsilon).
\ee
Combining \eqref{eqn:intCupper} and \eqref{eqn:intClower}, together with
$
-2\int_{-1}^{-\epsilon}\frac{d\sqt}{\sqt^2}
-2\int_{\epsilon}^{1}\frac{d\sqt}{\sqt^2}
=
4-\frac{4}{\epsilon}
$,
and taking the limit $\epsilon\rightarrow 0$, we find \eqref{eqn:intsinSquared}.
\end{proof}

Now we can prove that the $\kI=0$ case of \eqref{eqn:OpWseq} holds.
We will prove a more general result for $\sA$ not necessarily a zero of the zeta function.
\begin{proposition}\label{prop:recrelkz}
For $\Re\sA<2$, the numbers $\seqxbz$ defined in \eqref{eqn:xbkdefAlt} satisfy the relation 
\be\label{eqn:xbkdeZero}
-\frac{1}{2}\left(\sA+1\right)\xb_{0}
+\sum_{\nI=0}^\infty\frac{\xb_{\nI+1}}{(2\nI+1)(2\nI+3)}=0.
\ee
\end{proposition}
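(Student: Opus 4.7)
The strategy is to evaluate the series on the left-hand side of \eqref{eqn:xbkdeZero} in closed form by combining the integral representation \eqref{eqn:intgenFbtF} of $\genFb(\tF)$ with \lemref{lem:xcothxint}. First I would use the partial-fraction identity $\frac{1}{(2\nI+1)(2\nI+3)}=\tfrac{1}{2}\int_0^1 t^{2\nI}(1-t^2)\,dt$ together with Fubini (absolute convergence follows from $\xb_\nI=O([\log\nI]^2)$, shown in \eqref{eqn:bkIlog}) to rewrite the sum as
\[
\sum_{\nI=0}^\infty\frac{\xb_{\nI+1}}{(2\nI+1)(2\nI+3)}
=\frac{1}{2}\int_0^1\frac{(1-t^2)\bigl[\genFb(t^2)-\xb_0\bigr]}{t^2}\,dt.
\]
Substituting \eqref{eqn:intgenFbtF} at $\tF=t^2$ cancels the factor $1/(1-t^2)$, and the constant-$\eta(\sA)$ piece integrates to $\eta(\sA)/2$.

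Next I would swap the resulting $(t,x)$ double integration (absolute convergence for $0<\Re\sA<2$: near $(t,x)=(0,0)$ the factor $\sin^2(\uMF(t^2)x/(2\pi))/t^2$ is bounded, and $x^{-\sA}/\sinh x$ is integrable at the origin for $\Re\sA<2$, while $1/\sinh x$ decays exponentially at large $x$). Writing $\sin^2(y)=\tfrac{1}{4}(2-e^{2iy}-e^{-2iy})$ turns the integrand into powers of $(1\pm t)/(1\mp t)$, and the $t\to -t$ symmetry of $\uMF(t^2)^2$ allows extending the $t$-range to $(-1,1)$. The inner integral then becomes precisely the one evaluated in \lemref{lem:xcothxint}, yielding $(x\coth x-1)/2$.

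The remaining task is to compute $\int_0^\infty x^{-\sA}(x\coth x-1)/\sinh x\,dx$. Writing $\cosh x/\sinh^2 x=-\frac{d}{dx}(1/\sinh x)$ and integrating by parts, the boundary term $[x^{1-\sA}/\sinh x]_0^\infty$ vanishes for $\Re\sA<0$, leaving
\[
\int_0^\infty\frac{x^{-\sA}(x\coth x-1)}{\sinh x}\,dx
=-\sA\int_0^\infty\frac{x^{-\sA}\,dx}{\sinh x}
=\frac{\sA\,\pi^{1-\sA}\eta(\sA)}{\sin(\pi\sA/2)},
\]
the last equality by \eqref{eqn:intxsAsinh}. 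Both sides are analytic in $\sA$ on the strip $\Re\sA<2$, so the identity persists there by analytic continuation. Multiplying by the overall prefactor $\pi^{\sA-1}\sin(\pi\sA/2)/2$ produces $\sA\eta(\sA)/2$; combining with the earlier $\eta(\sA)/2$ yields $(\sA+1)\eta(\sA)/2=(\sA+1)\xb_0/2$, which is \eqref{eqn:xbkdeZero}. The main technical obstacle is this last step, since for $\Re\sA\ge 0$ the two terms produced by the integration by parts are individually divergent, so the identity has to be established in the strip $\Re\sA<0$ and then extended to $\Re\sA<2$ by analytic continuation of both sides.
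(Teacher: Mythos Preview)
Your proposal is correct and follows essentially the same route as the paper's own proof: represent the sum as $\tfrac{1}{2}\int_0^1(1-\alpha^2)(\genFb(\alpha^2)-\xb_0)\,d\alpha/\alpha^2$, substitute \eqref{eqn:intgenFbtF}, exchange the order of integration, invoke \lemref{lem:xcothxint} to reduce the inner integral to $\tfrac{1}{2}(x\coth x-1)$, and evaluate the remaining $x$-integral by parts for $\Re\sA<0$ followed by analytic continuation to $\Re\sA<2$. One small imprecision: near $x=0$ it is not $x^{-\sA}/\sinh x$ by itself that is integrable for $\Re\sA<2$, but the full integrand (which gains an extra factor of $x^2$ from $\sin^2(\uMF x/2\pi)/t^2$); with that correction your Fubini argument works on the full range $\Re\sA<2$, not only $0<\Re\sA<2$.
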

\begin{proof}
Since $|\xb_\kI|=O([\log\kI]^2)$, the generating function \eqref{eqn:genFbDef} can be viewed as a Taylor series of an analytic function that converges in the disk $|\tF|<1$. We denote this analytic function also by $\genFb(\tF)$ and use it to evaluate the sum as follows.
\bear
\lefteqn{
\sum_{\nI=0}^\infty\frac{\xb_{\nI+1}}{(2\nI+1)(2\nI+3)} 
}\nn\\
&=&
\frac{1}{2}\int_0^1
(1-\sqt^2)\sum_{\nI=0}^\infty\xb_{\nI+1}\sqt^{2\nI}
d\sqt
=
\frac{1}{2}\int_0^1
(1-\sqt^2)\left(\genFb(\sqt^2)-\xb_0\right)\frac{d\sqt}{\sqt^2}\,.
\label{eqn:intrepsum}
\nn\\
&&
\eear
[The order of integration and sum can be exchanged since $|\xb_\kI|=O([\log\kI]^2)$.]

Since $\xb_0=\eta(\sA)$, by definition, we can use \eqref{eqn:intgenFbtF} to write
$$
(1-\sqt^2)(\genFb(\sqt^2)-\xb_0)=
\sqt^2\xb_0+
2\pi^{\sA-1}\sin\frac{\pi\sA}{2}
\int_0^\infty
\sin^2\left(\frac{\rMF x}{2\pi}\right)
\frac{x^{-\sA}dx}{\sinh x}\,,
$$
where $\rMF$ was defined in \eqref{eqn:rMFdef}.
So we have
$$
\sum_{\nI=0}^\infty\frac{\xb_{\nI+1}}{(2\nI+1)(2\nI+3)}
=
\frac{1}{2}\xb_0+
\pi^{\sA-1}\sin\frac{\pi\sA}{2}
\int_0^\infty
\left\lbrack\int_0^1\sin^2\left(\frac{\rMF x}{2\pi}\right)\frac{d\sqt}{\sqt^2}\right\rbrack
\frac{x^{-\sA}dx}{\sinh x}\,,
$$
and the order of integrations can be exchanged, since the double integral converges absolutely.
Using \eqref{eqn:intsinSquared}, we calculate
\bear
\int_0^1\sin^2\left(\frac{\rMF x}{2\pi}\right)\frac{d\sqt}{\sqt^2}
&=&
\frac{1}{8}\int_{-1}^1
\left\lbrack
2
-\left(\frac{1+\sqt}{1-\sqt}\right)^{\frac{ix}{\pi}}
-\left(\frac{1-\sqt}{1+\sqt}\right)^{\frac{ix}{\pi}}
\right\rbrack\frac{d\sqt}{\sqt^2}
\nn\\
=
\frac{1}{2}(x\coth x-1),
\nn\\
&&\label{eqn:IntSinSquareSqt2}
\eear
and therefore
\bear
-\frac{1}{2}\xb_0
+\sum_{\nI=0}^\infty\frac{\xb_{\nI+1}}{(2\nI+1)(2\nI+3)}
&=&
\frac{1}{2}\pi^{\sA-1}\sin\frac{\pi\sA}{2}
\int_0^\infty
\left(
x\coth x-1
\right)
\frac{x^{-\sA}dx}{\sinh x}\,.
\nn\\ &&
\label{eqn:sumbkzandintxcothx}
\eear
The last integral converges for $\Re\sA<2$, and for $\Re\sA<0$ we can easily see using integration by parts that it evaluates to
\be\label{eqn:intwithcoth}
\int_0^\infty
\left(
x\coth x-1
\right)
\frac{x^{-\sA}dx}{\sinh x}
=
-\sA\int_0^\infty\frac{x^{-\sA}dx}{\sinh x}
=
\frac{\pi^{1-\sA}\sA\eta(\sA)}{\sin\left(\frac{\pi\sA}{2}\right)},
\ee
where we have used \eqref{eqn:intxsAsinh} for the integral of $x^{-\sA}/\sinh x$.
It follows that
\be\label{eqn:sinintxcothx}
\frac{1}{2}\pi^{\sA-1}\sin\frac{\pi\sA}{2}
\int_0^\infty
\left(
x\coth x-1
\right)
\frac{x^{-\sA}dx}{\sinh x}
=\frac{1}{2}\sA\eta(\sA).
\ee
Since both sides of \eqref{eqn:sinintxcothx} are analytic functions that are well defined in the entire domain $\Re\sA<2$, equation \eqref{eqn:sinintxcothx} holds for $\Re\sA<2$.
Substituting the integral \eqref{eqn:sinintxcothx} back into the sum \eqref{eqn:sumbkzandintxcothx}, and setting $\xb_0=\eta(s)$, we find
$$
\sum_{\nI=0}^\infty\frac{\xb_{\nI+1}}{(2\nI+1)(2\nI+3)}
=\frac{1}{2}\xb_0+\frac{1}{2}\sA\eta(\sA)=\frac{1}{2}(\sA+1)\eta(\sA),
$$
which proves \eqref{eqn:xbkdeZero}.

\end{proof}

In \propref{prop:recrelkz} we proved the $\kI=0$ case of \eqref{eqn:OpWseq}. To extend the proof to $\kI>0$ we need the following.
\begin{lemma}
\label{lem:intsinML}
Let $\left(\PolyML_\kI\right)_{\kI=0}^\infty$ be the Mittag-Leffler polynomials, defined in \eqref{eqn:genFMLk}. For $x\in\R$,
\bear
\lefteqn{
\int_0^1
\left\lbrack
2
-\left(\frac{1+\sqt}{1-\sqt}\right)^{\frac{ix}{\pi}}
-\left(\frac{1-\sqt}{1+\sqt}\right)^{\frac{ix}{\pi}}
+2\sum_{\nI=1}^{\kI-1}\PolyML_{2\nI}\left(\tfrac{i x}{\pi}\right)\sqt^{2\nI}
\right\rbrack\frac{d\sqt}{\sqt^{2\kI}}
}\nn\\
&=&
2\sum_{\nI=0}^{\kI-1}\frac{\PolyML_{2\nI}\left(\tfrac{i x}{\pi}\right)}{2\nI-2\kI+1}
-i\pi\PolyML_{2\kI-1}\left(\tfrac{i x}{\pi}\right)\coth x.
\nn\\ &&
\label{eqn:intsinSquaredML}
\eear
\end{lemma}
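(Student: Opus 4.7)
The plan is to adapt the contour argument of \lemref{lem:xcothxint} by applying it to each branch factor separately. Since the integrand on the LHS of \eqref{eqn:intsinSquaredML} is manifestly even in $\alpha$ (the two branch terms swap under $\alpha\to-\alpha$ and the remaining pieces contain only even powers), we have $\int_0^1 = \tfrac{1}{2}\int_{-1}^1$, so it suffices to evaluate the symmetric integral on $[-1,1]$. Define
$$
f_{\kI}^{\pm}(z)\eqdef \frac{1}{z^{2\kI}}\left(\frac{1\pm z}{1\mp z}\right)^{i x/\pi},
$$
with branches chosen so the cut lies on $[-1,1]$ and crossing the cut from above to below multiplies $f_\kI^{\pm}$ by $e^{\mp 2 x}$. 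Since $f_\kI^{\pm}(z)=O(z^{-2\kI})$ at infinity, both contour integrals $\oint_{\ContourC_2}f_\kI^{\pm}(z)\,dz$ vanish, where $\ContourC_2$ is the contour of \figref{fig:ContourC2}.

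I then decompose each $\oint_{\ContourC_2}$ exactly as in the proof of \lemref{lem:xcothxint}: the horizontal legs contribute $(e^{\mp 2 x}-1)\bigl[\int_{-1}^{-\epsilon}+\int_\epsilon^1\bigr]f_\kI^{\pm}(\alpha)\,d\alpha$, the small circles around $\pm 1$ are negligible as $\epsilon'\to 0$, and the two semicircles around $z=0$ are evaluated using the Laurent expansion supplied by the Mittag-Leffler generating function \eqref{eqn:genFMLk},
$$
f_\kI^{\pm}(z)=\sum_{n=0}^\infty(\pm 1)^n\PolyML_n\!\left(\tfrac{i x}{\pi}\right) z^{n-2\kI}.
$$
Parametrizing each semicircle by $z=\epsilon e^{i\theta}$, the angular integral selects only the $n=2\kI-1$ term as yielding an $\epsilon$-independent constant piece, while terms with $n=2\jI$ for $\jI=0,\dots,\kI-1$ produce divergent $\epsilon^{2\jI-2\kI+1}$ pieces; all other indices contribute $O(\epsilon)$. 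After dividing through by the horizontal prefactor and using $(1+e^{\mp 2 x})/(e^{\mp 2 x}-1)=\mp\coth x$, the $+$ and $-$ computations combine into
$$
\Bigl[\int_{-1}^{-\epsilon}\!\!+\!\int_\epsilon^1\Bigr]\bigl(f_\kI^+ + f_\kI^-\bigr)(\alpha)\,d\alpha = 2i\pi \PolyML_{2\kI-1}\!\left(\tfrac{ix}{\pi}\right)\coth x - 4\sum_{\jI=0}^{\kI-1}\frac{\PolyML_{2\jI}\!\left(\tfrac{ix}{\pi}\right)\epsilon^{2\jI-2\kI+1}}{2\jI-2\kI+1} + O(\epsilon).
$$

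The remaining polynomial pieces of the integrand contribute the elementary integrals $\bigl[\int_{-1}^{-\epsilon}+\int_\epsilon^1\bigr]\alpha^{2n-2\kI}\,d\alpha = 2(1-\epsilon^{2n-2\kI+1})/(2n-2\kI+1)$, summed over $n=0,1,\dots,\kI-1$ (the $n=0$ entry absorbing the leading $2\alpha^{-2\kI}$ via $\PolyML_0=1$). Subtracting the previous display, the divergent $\epsilon^{2\jI-2\kI+1}$ pieces cancel identically---this cancellation is precisely the reason the counterterms $2\sum_{n=1}^{\kI-1}\PolyML_{2n}(\tfrac{ix}{\pi})\alpha^{2n}$ appear inside the bracket in \eqref{eqn:intsinSquaredML}---and sending $\epsilon\to 0$ and dividing by $2$ yields \eqref{eqn:intsinSquaredML}. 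The main obstacle I anticipate is the bookkeeping of signs and powers of $\epsilon$ in the semicircle expansion, and verifying that the sole odd-$n$ contribution $n=2\kI-1$ produces the $\coth x$ term with the correct sign after combining the two branches; once these are handled, the identity emerges with no further analytic input.
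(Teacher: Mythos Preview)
Your proposal is correct and follows essentially the same approach as the paper: both proofs extend the contour argument of \lemref{lem:xcothxint} by integrating $\frac{1}{z^{2\kI}}\bigl(\frac{1+z}{1-z}\bigr)^{ix/\pi}$ around $\ContourC_2$, read off the semicircle contribution from the Mittag-Leffler expansion \eqref{eqn:genFMLk}, and verify that the counterterms $2\sum_{\nI=1}^{\kI-1}\PolyML_{2\nI}(\tfrac{ix}{\pi})\sqt^{2\nI}$ cancel the divergent $\epsilon^{2\jI-2\kI+1}$ pieces. The only cosmetic difference is that the paper treats the single function $\func_\kI$ and then invokes the parity of $\PolyML_{2\nI}$ and $\PolyML_{2\kI-1}$ to obtain the companion identity for the other branch, whereas you carry both $f_\kI^{\pm}$ through explicitly; the content is the same.
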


\begin{proof}
Note that the integrand on the LHS of \eqref{eqn:intsinSquared} is regular at $\sqt=0$ since
$$
\left(\frac{1+\sqt}{1-\sqt}\right)^{\frac{ix}{\pi}}
+\left(\frac{1-\sqt}{1+\sqt}\right)^{\frac{ix}{\pi}}
=2+2\sum_{\nI=1}^{\kI-1}\PolyML_{2\nI}\left(\tfrac{i x}{\pi}\right)\sqt^{2\nI}
+O(\sqt^{2\kI}).
$$

Similarly to \eqref{eqn:func1def}, we define
\be\label{eqn:funckIdef}
\func_\kI(\zC)\eqdef\frac{1}{\zC^{2\kI}}\left(\frac{1+\zC}{1-\zC}\right)^{\frac{ix}{\pi}}\,,
\ee
defined with a cut along $[-1,1]$, so that for $-1<\Re\zC<1$ the limiting values of $\func_\kI(\zC)$ when $\zC$ approaches the cut $(-1,1)$ from above and below are given by
$$
\lim_{\Im\zC\rightarrow 0^{+}}\func_\kI(\zC) = 
\frac{1}{(\Re\zC)^{2\kI}}\left(\frac{1+\Re\zC}{1-\Re\zC}\right)^{\frac{i x}{\pi}}
\,,\qquad
\lim_{\Im\zC\rightarrow 0^{-}}\func_\kI(\zC) = 
\frac{1}{(\Re\zC)^{2\kI}}\left(\frac{1+\Re\zC}{1-\Re\zC}\right)^{\frac{i x}{\pi}}
e^{-2x}\,.
$$
As in the proof of \lemref{lem:xcothxint}, we can then calculate the integral of \eqref{eqn:intsinSquaredML} using $\oint_{\ContourC_2}\func_\kI(\zC)d\zC$. Technically, we need the contribution to the contour integral from the two semicircles of radius $\epsilon$ around the origin (see \figref{fig:ContourC2}).

Using \eqref{eqn:genFMLk} we expand
$$
\left(\frac{1+\zC}{1-\zC}\right)^{\frac{ix}{\pi}} =
\sum_{\nI=0}^{2\kI-1}\PolyML_\nI\left(\tfrac{i x}{\pi}\right)\zC^\nI+O(\zC^{2\kI}),
$$
in terms of the Mittag-Leffler polynomials $\PolyML_\nI$. The contribution of the two semicircles is therefore
\bear
\lefteqn{
\int_{\text{semicircles}}\func_\kI(\zC)d\zC
=\int_{\text{semicircles}}\sum_{\nI=0}^{2\kI-1}\PolyML_\nI\left(\tfrac{i x}{\pi}\right)\zC^{\nI-2\kI}d\zC + O(\epsilon)
}\nn\\
&=&
i\pi\PolyML_{2\kI-1}\left(\tfrac{i x}{\pi}\right)\left(1+e^{-2x}\right)
+\sum_{\nI=0}^{2\kI-2}\PolyML_\nI\left(\tfrac{i x}{\pi}\right)\frac{\epsilon^{\nI-2\kI+1}}{\nI-2\kI+1}\left\lbrack(-1)^{\nI+1}-1\right\rbrack
\nn\\ &&
+\sum_{\nI=0}^{2\kI-2}\PolyML_\nI\left(\tfrac{i x}{\pi}\right)\frac{\epsilon^{\nI-2\kI+1}}{\nI-2\kI+1}
\left\lbrack 1-(-1)^{\nI+1}\right\rbrack e^{-2x}
 + O(\epsilon)
\nn\\
&=&
i\pi\left(1+e^{-2x}\right)\PolyML_{2\kI-1}\left(\tfrac{i x}{\pi}\right)
+2\left(e^{-2x}-1\right)\sum_{\nI=0}^{\kI-1}\PolyML_{2\nI}\left(\tfrac{i x}{\pi}\right)\frac{\epsilon^{2\nI-2\kI+1}}{2\nI-2\kI+1}
 + O(\epsilon).
\nn
\eear
Following an argument similar to the one leading to \eqref{eqn:intCupper}-\eqref{eqn:intClower}, and noting that $\PolyML_{2\nI}$ is an even function while $\PolyML_{2\kI-1}$ is odd, we now expand $0=\oint_{\ContourC_2}\func_\kI(\zC)d\zC$ to get
\bear
\lefteqn{
\int_{-1}^{-\epsilon}\left(\frac{1+\sqt}{1-\sqt}\right)^{\frac{ix}{\pi}}\frac{d\sqt}{\sqt^{2\kI}}
+\int_{\epsilon}^{1}\left(\frac{1+\sqt}{1-\sqt}\right)^{\frac{ix}{\pi}}\frac{d\sqt}{\sqt^{2\kI}}
}
\nn\\
&=&
i\pi\PolyML_{2\kI-1}\left(\tfrac{i x}{\pi}\right)\coth x
-2\sum_{\nI=0}^{\kI-1}\PolyML_{2\nI}\left(\tfrac{i x}{\pi}\right)\frac{\epsilon^{2\nI-2\kI+1}}{2\nI-2\kI+1}
+O(\epsilon).
\label{eqn:intCupperkI}
\nn
\eear

It follows that

\bear
\lefteqn{
\int_0^1
\left\lbrack
2\sum_{\nI=0}^{\kI-1}\PolyML_{2\nI}\left(\tfrac{i x}{\pi}\right)\sqt^{2\nI}
-\left(\frac{1+\sqt}{1-\sqt}\right)^{\frac{ix}{\pi}}
-\left(\frac{1-\sqt}{1+\sqt}\right)^{\frac{ix}{\pi}}
\right\rbrack\frac{d\sqt}{\sqt^{2\kI}}
}
\nn\\ &&
=\lim_{\epsilon\rightarrow 0}\int_\epsilon^1
\left\lbrack
2\sum_{\nI=0}^{\kI-1}\PolyML_{2\nI}\left(\tfrac{i x}{\pi}\right)\sqt^{2\nI}
-\left(\frac{1+\sqt}{1-\sqt}\right)^{\frac{ix}{\pi}}
-\left(\frac{1-\sqt}{1+\sqt}\right)^{\frac{ix}{\pi}}
\right\rbrack\frac{d\sqt}{\sqt^{2\kI}}
\nn\\
&=&
\lim_{\epsilon\rightarrow 0}\Bigl\lbrack
2\sum_{\nI=0}^{\kI-1}\PolyML_{2\nI}\left(\tfrac{i x}{\pi}\right)
\left(\frac{\epsilon^{2\nI-2\kI+1}}{2\kI-2\nI-1}-\frac{1}{2\kI-2\nI-1}\right)
\nn\\ &&
-i\pi\PolyML_{2\kI-1}\left(\tfrac{i x}{\pi}\right)\coth x
+2\sum_{\nI=0}^{\kI-1}\PolyML_{2\nI}\left(\tfrac{i x}{\pi}\right)\frac{\epsilon^{2\nI-2\kI+1}}{2\nI-2\kI+1}
+O(\epsilon)\Bigr\rbrack
\nn\\
&=&
2\sum_{\nI=0}^{\kI-1}\frac{\PolyML_{2\nI}\left(\tfrac{i x}{\pi}\right)}{2\nI-2\kI+1}
-i\pi\PolyML_{2\kI-1}\left(\tfrac{i x}{\pi}\right)\coth x.
\nn
\eear
Equation \eqref{eqn:intsinSquaredML} follows, since $\PolyML_{0}=1$.

\end{proof}

We also need an integral expression for $\xd_\kI$.

\begin{lemma}
\label{lem:xdML}
$\xd_\kI$ is given by
\be\label{eqn:xdML}
\xd_\kI=-\pi^{\sA-1}\sin\left(\frac{\pi\sA}{2}\right)\int_0^\infty
\PolyML_{2(\kI+1)}\left(\tfrac{i x}{\pi}\right)\frac{x^{-\sA}dx}{\sinh x}
\,.
\ee
\end{lemma}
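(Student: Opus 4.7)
The plan is to extract an integral formula for $\xd_\kI$ directly from the generating function identity \eqref{eqn:intgenFb} by expanding $\sin^2(\uMF x/2\pi)$ as a formal power series in $\tF$ with coefficients given by Mittag-Leffler polynomials.

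First I would observe that, combining the generating function definition \eqref{eqn:genFdDef} with the integral representation \eqref{eqn:intgenFbtF} and using $\xb_0=\eta(\sA)$, the constant terms cancel and one obtains (as formal power series in $\tF$)
\be\label{eqn:genFdPlan}
\genFd(\tF)
=\frac{2\pi^{\sA-1}\sin(\pi\sA/2)}{\tF}\int_0^\infty \sin^2\!\left(\frac{\uMF x}{2\pi}\right)\frac{x^{-\sA}\,dx}{\sinh x}\,.
\ee
Since the coefficient of $\tF^{\nI}$ in $\sin^2(\uMF x/2\pi)$ is a polynomial in $x$ divisible by $x^2$ (the factor $\uMF$ starts at order $\tF^{1/2}$ and pairs of $\uMF$'s produce integer powers of $\tF$), the integrand in each coefficient is absorbed against $x^{-\sA}/\sinh x$ for $\Re\sA<2$, so the expression above is coefficientwise well defined.

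Next I would rewrite $\sin^2(\uMF x/2\pi) = \tfrac{1}{2}-\tfrac{1}{4}\bigl(\tfrac{1+\sqrt{\tF}}{1-\sqrt{\tF}}\bigr)^{ix/\pi}-\tfrac{1}{4}\bigl(\tfrac{1-\sqrt{\tF}}{1+\sqrt{\tF}}\bigr)^{ix/\pi}$ and use the Mittag-Leffler generating function \eqref{eqn:genFMLk} with $x\mapsto ix/\pi$ and $\tF\mapsto\pm\sqrt{\tF}$. The odd powers of $\sqrt{\tF}$ cancel between the two terms, and since $\PolyML_0\equiv 1$, the constant terms also cancel, leaving
\be\label{eqn:sinsqML}
\sin^2\!\left(\frac{\uMF x}{2\pi}\right)
=-\frac{1}{2}\sum_{\nI=1}^\infty \PolyML_{2\nI}\!\left(\tfrac{i x}{\pi}\right)\tF^{\nI}
\ee
as an element of $\C[x][[\tF]]$. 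Inserting \eqref{eqn:sinsqML} into \eqref{eqn:genFdPlan} and extracting the coefficient of $\tF^{\kI}$ (which corresponds to $\nI=\kI+1$ inside the sum, after accounting for the prefactor $1/\tF$) yields precisely \eqref{eqn:xdML}.

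The only nontrivial step is the interchange of sum over $\nI$ with the integral over $x$ implicit in reading off coefficients from \eqref{eqn:genFdPlan}. This, however, is exactly the formal-power-series identification used to establish \eqref{eqn:intgenFbtF} in the first place: each coefficient of $\tF^{\kI}$ in \eqref{eqn:genFdPlan} is a single convergent integral (of a polynomial in $x$ of degree $\le 2(\kI+1)$ divisible by $x^2$, against $x^{-\sA}/\sinh x$, convergent for $\Re\sA<2$), and by \propref{prop:xdkIAsymp} the generating function $\genFd(\tF)$ is in fact analytic on $|\tF|<1$, so the formal identification of coefficients gives the numerical identity \eqref{eqn:xdML}. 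I do not expect any serious obstacle here, the proof being essentially a bookkeeping exercise in matching the Mittag-Leffler expansion to the integral representation.
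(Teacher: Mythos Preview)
Your proof is correct. Both you and the paper reach \eqref{eqn:xdML} via the same underlying identities, but organized differently: the paper works directly at fixed $\kI$, using $\PolyOrth_{\kI+1}-\PolyOrth_\kI=\PolyML_{2(\kI+1)}(\sqrt{x})$ from \eqref{eqn:PolyOrthML} to write $\xd_\kI=\innerP{x^{-(\sA+1)/2}}{\PolyML_{2(\kI+1)}(\sqrt{x})}=\sum_{\jI=1}^{\kI+1}\aPoly_\jI\eta(\sA-2\jI)$, and then applies \eqref{eqn:intxsAsinh} term by term to convert each $\eta(\sA-2\jI)$ into an integral. You instead package everything at the generating-function level, invoking the already-established integral representation \eqref{eqn:intgenFbtF} and then expanding $\sin^2(\uMF x/2\pi)$ via \eqref{eqn:genFMLk}. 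Your expansion \eqref{eqn:sinsqML} is precisely the generating-function form of $\PolyOrth_{\kI+1}-\PolyOrth_\kI=\PolyML_{2(\kI+1)}(\sqrt{x})$, so the two arguments are really the same computation viewed from two sides; the paper's route is marginally more self-contained (it does not need to cite \eqref{eqn:intgenFbtF}), while yours makes the coefficient extraction cleaner.
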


\begin{proof}
Note that \eqref{eqn:xdML} converges at $x=0$ because $\PolyML_{2\nI}(x)=O(x^2)$ for $\nI\ge 1$.
Using \eqref{eqn:xdDef}, \eqref{eqn:xbkdefAlt}, and \eqref{eqn:PolyOrthML},
\be\label{eqn:xdIntML}
\xd_\kI = \xb_{\kI+1}-\xb_\kI
=\innerP{x^{-(\sA+1)/2}}{\PolyOrth_{\kI+1}(x)-\PolyOrth_\kI(x)}
=
\innerP{x^{-(\sA+1)/2}}{\PolyML_{2(\kI+1)}(\sqrt{x})}.
\ee
$\PolyML_{2(\kI+1)}$ is an even function that vanishes at zero, and if we set
$$
\PolyML_{2(\kI+1)}(\sqrt{x})=
\sum_{\jI=1}^{\kI+1} \aPoly_\jI x^\jI,
$$
we can use \eqref{eqn:innerPDefz} to write
\be\label{eqn:innerPML}
\innerP{x^{-(\sA+1)/2}}{\PolyML_{2(\kI+1)}(\sqrt{x})}
=\sum_{\jI=1}^{\kI+1}\aPoly_\jI\eta(\sA-2\jI),
\ee
and using \eqref{eqn:intxsAsinh} we can express this as
\bear
\sum_{\jI=1}^{\kI+1}\aPoly_\jI\eta(\sA-2\jI)&=&
-\sum_{\jI=1}^{\kI+1}\aPoly_\jI
\pi^{\sA-2\jI-1}\sin\left(\frac{\pi\sA}{2}-\jI\pi\right)\int_0^\infty\frac{x^{2\jI-\sA}dx}{\sinh x}
\nn\\
&=&
-\pi^{\sA-1}\sin\left(\frac{\pi\sA}{2}\right)\int_0^\infty
\PolyML_{2(\kI+1)}\left(\tfrac{i x}{\pi}\right)\frac{x^{-\sA}dx}{\sinh x}
\nn
\eear

\end{proof}

Next, we need a couple of identities for Mittag-Leffler polynomials.

\begin{lemma}
\label{lem:DML}
The Mittag-Leffler polynomials satisfy
\be\label{eqn:DML}
\PolyML_{\nI}'(x)=2\sum_{\kI=0}^{\left\lfloor\frac{\nI-1}{2}\right\rfloor}\frac{\PolyML_{\nI-2\kI-1}(x)}{2\kI+1}
\ee
and
\be\label{eqn:MLoverx}
\frac{1}{x}\PolyML_{2\nI+1}(x)=\frac{2}{2\nI+1}\sum_{\kI=0}^\nI\PolyML_{2\kI}(x).
\ee

\end{lemma}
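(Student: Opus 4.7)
Both identities follow from direct manipulation of the generating function \eqref{eqn:genFMLk}, together with the expansion $\log\tfrac{1+\tF}{1-\tF}=2\sum_{\kI=0}^{\infty}\tF^{2\kI+1}/(2\kI+1)$.

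For \eqref{eqn:DML}, I differentiate \eqref{eqn:genFMLk} in $x$ to get $\partial_x\genFMLk=\log(\tfrac{1+\tF}{1-\tF})\cdot\genFMLk$, then extract the coefficient of $\tF^\nI$ on both sides. The Cauchy product on the right picks up only odd powers from the logarithm, so the contributing indices are $2\kI+1\le\nI$, i.e., $\kI\le\lfloor(\nI-1)/2\rfloor$, and the identity drops out directly.

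For \eqref{eqn:MLoverx}, the relation $\genFMLk(-x,\tF)=1/\genFMLk(x,\tF)$ implies that the odd-in-$x$ part of $\genFMLk$ equals $\sinh(x\log\tfrac{1+\tF}{1-\tF})=\sum_{\nI=0}^\infty \PolyML_{2\nI+1}(x)\tF^{2\nI+1}$, and since $\PolyML_{2\nI+1}$ is an odd polynomial (from $\PolyML_\nI(-x)=(-1)^\nI\PolyML_\nI(x)$), division by $x$ makes sense as a polynomial operation. I then rewrite the LHS as the antiderivative
\[
\frac{\sinh\!\left(x\log\frac{1+\tF}{1-\tF}\right)}{x}=\int_0^\tF \cosh\!\left(x\log\tfrac{1+u}{1-u}\right)\frac{2\,du}{1-u^2},
\]
using $\tfrac{d}{du}\log\tfrac{1+u}{1-u}=\tfrac{2}{1-u^2}$. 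Expanding $\cosh(x\log\tfrac{1+u}{1-u})=\sum_{\kI=0}^\infty \PolyML_{2\kI}(x)u^{2\kI}$ (the even-in-$x$ part of $\genFMLk$) and $\tfrac{2}{1-u^2}=2\sum_{\mI=0}^\infty u^{2\mI}$, taking the Cauchy product and integrating term by term gives $\sum_{\nI=0}^\infty \tfrac{2\tF^{2\nI+1}}{2\nI+1}\sum_{\kI=0}^\nI \PolyML_{2\kI}(x)$. Matching the coefficient of $\tF^{2\nI+1}$ to $\PolyML_{2\nI+1}(x)/x$ then yields \eqref{eqn:MLoverx}.

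Everything is a formal-power-series manipulation in $\Q[x][[\tF]]$, so no convergence issues arise. The only real work is bookkeeping: tracking the parity of $\PolyML_\nI$ (so that $\PolyML_{2\nI+1}/x$ is a polynomial) and keeping the index shifts in the two Cauchy products straight. I do not anticipate any genuine obstruction.
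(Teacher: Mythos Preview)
Your proposal is correct and takes essentially the same approach as the paper, which simply states that both identities follow directly from the generating function \eqref{eqn:genFMLk} and refers to \S10 of \cite{Lomont:2001} for details. You have filled in those details cleanly: the derivative identity via $\partial_x\genFMLk=\log\tfrac{1+\tF}{1-\tF}\cdot\genFMLk$ and a Cauchy product, and the second identity via the parity decomposition $\genFMLk=\cosh(xL)+\sinh(xL)$ together with the antiderivative $\sinh(xL(\tF))/x=\int_0^\tF \cosh(xL(u))\,L'(u)\,du$.
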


\begin{proof}
These identities follow directly from the generating function \eqref{eqn:genFMLk}.
See \S10 of \cite{Lomont:2001} for details.
\end{proof}

We can now prove the extension of \propref{prop:recrelkz} to $\kI>0$.

\begin{proposition}\label{prop:recrelkAll}
For $\Re\sA<2$, the numbers $\seqxbz$ defined in \eqref{eqn:xbkdefAlt} satisfy the relation 
\be\label{eqn:recrelAllk}
-\frac{1}{2}\left(\frac{\sA}{2\kI-1}+1\right)\xb_{\kI-1}
+\sum_{\nI=0}^\infty\frac{\xb_{\kI+\nI}}{(2\nI+1)(2\nI+3)}=0,
\qquad
\kI=1,2,\dots
\ee
\end{proposition}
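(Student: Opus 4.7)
The plan is to reduce the identity \eqref{eqn:recrelAllk} to an equivalent statement in the differences $\xd_\kI=\xb_{\kI+1}-\xb_\kI$, and then evaluate the resulting series using the integral representations developed in Lemmas \ref{lem:xdML} and \ref{lem:intsinML}. First, I would use $\int_0^1\sqt^{2\nI}(1-\sqt^2)\,d\sqt=2/[(2\nI+1)(2\nI+3)]$ together with the bound $\xb_\kI=O([\log\kI]^2)$ from \eqref{eqn:bkIlog} to swap the $\nI$-sum with the $\sqt$-integral. An Abel-summation rearrangement of the integrand (based on the telescoping identity $\xb_{\kI+\nI}=\xb_{\kI-1}+\sum_{j=0}^\nI\xd_{\kI-1+j}$) then yields
\begin{equation*}
\sum_{\nI=0}^\infty\frac{\xb_{\kI+\nI}}{(2\nI+1)(2\nI+3)} = \frac{1}{2}\xb_{\kI-1}+\frac{1}{2}\sum_{l=0}^\infty\frac{\xd_{\kI+l-1}}{2l+1}\,,
\end{equation*}
reducing \eqref{eqn:recrelAllk} to the equivalent claim $\sum_{l=0}^\infty\xd_{\kI+l-1}/(2l+1)=\sA\xb_{\kI-1}/(2\kI-1)$.

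Next, I would substitute \lemref{lem:xdML} for each $\xd_{\kI+l-1}$ and interchange the $l$-sum with the $x$-integral, which is justified by \propref{prop:xdkIAsymp}. Using the Mittag-Leffler generating function \eqref{eqn:genFMLk} to express $\sum_{l\ge0}\PolyML_{2(\kI+l)}(ix/\pi)\sigma^{2l}$ in closed form, integrating $\sigma$ from $0$ to $1$, and applying \lemref{lem:intsinML}, I obtain
\begin{equation*}
\sum_{l=0}^\infty\frac{\PolyML_{2(\kI+l)}(ix/\pi)}{2l+1} = \sum_{\nI=0}^{\kI-1}\frac{\PolyML_{2\nI}(ix/\pi)}{2\kI-2\nI-1}+\frac{i\pi}{2}\PolyML_{2\kI-1}(ix/\pi)\coth x\,.
\end{equation*}
Re-integrating the finite sum against $x^{-\sA}/\sinh x$ and identifying each term via \lemref{lem:xdML} (for $\nI\ge1$) and \eqref{eqn:intxsAsinh} (for $\nI=0$) produces the partial contribution $\xb_0/(2\kI-1)+\sum_{m=0}^{\kI-2}\xd_m/(2\kI-2m-3)$.

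The $\coth$-integral is the main technical step. Invoking \eqref{eqn:MLoverx} to write $\PolyML_{2\kI-1}(ix/\pi)=\frac{2ix}{\pi(2\kI-1)}\sum_{l=0}^{\kI-1}\PolyML_{2l}(ix/\pi)$ pulls out a factor of $x$, and splitting $x\coth x=1+(x\coth x-1)$ allows integration by parts on the second piece exactly as in the proof of \propref{prop:recrelkz}, producing $\int_0^\infty(x\coth x-1)\PolyP(x)\frac{x^{-\sA}dx}{\sinh x}=-\sA\int_0^\infty\PolyP(x)\frac{x^{-\sA}dx}{\sinh x}+\int_0^\infty x\PolyP'(x)\frac{x^{-\sA}dx}{\sinh x}$ for any polynomial $\PolyP$. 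The non-derivative contributions combine via $\xb_{\kI-1}=\xb_0+\sum_{m=0}^{\kI-2}\xd_m$ into $(\sA-1)\xb_{\kI-1}/(2\kI-1)$, and the derivative terms, after expansion through \lemref{lem:DML} and a further application of \eqref{eqn:MLoverx}, should contribute the correction $-\tfrac{2}{2\kI-1}\sum_{m=0}^{\kI-2}(m+1)\xd_m/(2\kI-2m-3)$ needed to match the target $\sA\xb_{\kI-1}/(2\kI-1)$.

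The main obstacle is this last algebraic identification for the derivative terms, i.e.\ verifying that the iterated Mittag-Leffler recurrences assemble into precisely the coefficients $(m+1)/(2\kI-2m-3)$. A consistency check at $\kI=2$ reduces cleanly to $\xd_0=2\eta(\sA-2)$, which follows directly from the explicit $\PolyOrth_1(x)=1+2x$. For the general case, the cleanest route is to observe that $\int_0^\infty x\PolyML_{2l}'(ix/\pi)\,x^{-\sA}/\sinh x\,dx$ is itself a linear combination of $\eta(\sA-2j-2)$'s by \eqref{eqn:intxsAsinh}, which via the definition \eqref{eqn:xdIntML} re-expresses the desired identity as a purely combinatorial statement about coefficients of even Mittag-Leffler polynomials that can be checked from the generating function \eqref{eqn:genFMLk}.
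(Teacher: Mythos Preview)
Your approach assembles the same ingredients as the paper (\lemref{lem:intsinML}, \lemref{lem:xdML}, \lemref{lem:DML}, and integration by parts), but applies them in a different order, and this reordering is precisely what creates your ``main obstacle.'' After isolating the $\coth$ term, the paper integrates by parts \emph{directly} on $-i\pi\PolyML_{2\kI-1}(ix/\pi)\coth x\cdot x^{-\sA}/\sinh x$ over $[\epsilon,\infty)$, using $\coth x/\sinh x=-\tfrac{d}{dx}(1/\sinh x)$. This produces a boundary term at $\epsilon$, a term $\int\PolyML_{2\kI-1}'(ix/\pi)\,x^{-\sA}/\sinh x\,dx$, and a term $i\pi\sA\int\PolyML_{2\kI-1}(ix/\pi)\,x^{-\sA-1}/\sinh x\,dx$. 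The point is that \eqref{eqn:DML} gives $\PolyML_{2\kI-1}'(y)=2\sum_{\nI=0}^{\kI-1}\PolyML_{2\nI}(y)/(2\kI-2\nI-1)$, so the $\PolyML_{2\kI-1}'$ integral cancels \emph{exactly} against the finite sum $\sum_{\nI=0}^{\kI-1}\PolyML_{2\nI}/(2\kI-2\nI-1)$ that came from \lemref{lem:intsinML}. Only then does one invoke \eqref{eqn:MLoverx}, on the remaining $\PolyML_{2\kI-1}/x$ piece. By applying \eqref{eqn:MLoverx} \emph{first} and integrating by parts on each $\PolyML_{2l}$ separately, you forfeit this cancellation, which is why your derivative-term identity becomes a nontrivial combinatorial statement. (Your claimed form for it is correct --- I checked $\kI=2,3$ --- but the paper's ordering dissolves the obstacle rather than confronting it.)

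A secondary gap: for $\Re\sA\ge0$ the $\nI=0$ and $l=0$ pieces you split off via \eqref{eqn:intxsAsinh} are individually divergent integrals; only their combination is finite. The paper handles this by working on $[\epsilon,\infty)$ throughout and taking $\epsilon\to0$ at the end, with the singular pieces canceling explicitly in \eqref{eqn:intlimepsilonC}--\eqref{eqn:limintxminussA}. Your write-up treats them as if they converge separately, which strictly requires either an $\epsilon$-cutoff or analytic continuation from $\Re\sA<0$. Relatedly, the interchange of the $l$-sum with the $x$-integral is not justified by \propref{prop:xdkIAsymp} alone: that bounds $\xd_\kI$, not $|\PolyML_{2(\kI+l)}(ix/\pi)|$ uniformly in $x$, and the crude bound grows like $e^{x^2/6}$. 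The paper sidesteps this entirely by working with the closed-form integral representation of the generating function $\genFb$ from the outset, so that the $\sigma$-integral and the $x$-integral are the only two operations to commute.
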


\begin{proof}
We use an integral representation of the sum similar to \eqref{eqn:intrepsum}:
\bear
\lefteqn{
\sum_{\nI=0}^\infty\frac{\xb_{\nI+\kI}}{(2\nI+1)(2\nI+3)} 
=
\frac{1}{2}\int_0^1
(1-\sqt^2)\sum_{\nI=0}^\infty\xb_{\nI+\kI}\sqt^{2\nI}
d\sqt
}\nn\\
&=&
\frac{1}{2}\int_0^1
(1-\sqt^2)\left(\genFb(\sqt^2)-\sum_{\mI=0}^{\kI-1}\xb_\mI\sqt^{2\mI}\right)\frac{d\sqt}{\sqt^{2\kI}}\,.
\label{eqn:intrepsumkI}
\eear
The evaluation of the integral is similar to the evaluation of \eqref{eqn:intrepsum} in \propref{prop:recrelkz}, where we used \eqref{eqn:intsinSquared}, except that $\frac{d\sqt}{\sqt^2}$ is replaced with $\frac{d\sqt}{\sqt^{2\kI}}$.
We substitute
\be\label{eqn:intgenFbtFsubs}
(1-\sqt^2)\genFb(\sqt^2)=
\xb_0
+2\pi^{\sA-1}\sin\frac{\pi\sA}{2}
\int_0^\infty
\sin^2\left(\frac{\uMF x}{2\pi}\right)
\frac{x^{-\sA}dx}{\sinh x}
\,,
\ee
where we used \eqref{eqn:intgenFbtF} and $\xb_0=\eta(\sA)$.
We then calculate
\bear
\lefteqn{
\int_0^\infty
\sin^2\left(\frac{\uMF x}{2\pi}\right)
\frac{x^{-\sA}dx}{\sinh x}
=
\frac{1}{4}
\int_0^\infty
\left\lbrack
2
-\left(\frac{1+\sqt}{1-\sqt}\right)^{\frac{ix}{\pi}}
-\left(\frac{1-\sqt}{1+\sqt}\right)^{\frac{ix}{\pi}}
\right\rbrack
\frac{x^{-\sA}dx}{\sinh x}
}\nn\\
&=&
\frac{1}{4}
\int_0^\infty
\left\lbrack
2
-\left(\frac{1+\sqt}{1-\sqt}\right)^{\frac{ix}{\pi}}
-\left(\frac{1-\sqt}{1+\sqt}\right)^{\frac{ix}{\pi}}
+2\sum_{\nI=1}^{\kI-1}\PolyML_{2\nI}\left(\tfrac{i x}{\pi}\right)\sqt^{2\nI}
\right\rbrack
\frac{x^{-\sA}dx}{\sinh x}
\nn\\ &&
-\frac{1}{2}
\sum_{\nI=1}^{\kI-1}\sqt^{2\nI}
\int_0^\infty\PolyML_{2\nI}\left(\tfrac{i x}{\pi}\right)\frac{x^{-\sA}dx}{\sinh x}\,.
\nn\\ &&
\label{eqn:intsinsqsqtML}
\eear
Note that for small $x$ and $\nI>0$ the even Mittag-Leffler polynomials behave as $\PolyML_{2\nI}(x)=O(x^2)$, so the integrals above converge absolutely.

Using \eqref{eqn:xdML} we can write
\be\label{eqn:sumsqtintML}
\sum_{\nI=1}^{\kI-1}\sqt^{2\nI}
\int_0^\infty\PolyML_{2\nI}\left(\tfrac{i x}{\pi}\right)\frac{x^{-\sA}dx}{\sinh x}
=
-\frac{\sum_{\nI=1}^{\kI-1}\xd_{\nI-1}\sqt^{2\nI}}{\pi^{\sA-1}\sin\left(\frac{\pi\sA}{2}\right)}
\ee
and from the definition of $\xd_\nI$ we have
\be\label{eqn:sumxdsqt}
\sum_{\nI=1}^{\kI-1}\xd_{\nI-1}\sqt^{2\nI}
=\sum_{\nI=1}^{\kI-1}(\xb_{\nI}-\xb_{\nI-1})\sqt^{2\nI}
=\xb_{\kI-1}\sqt^{2\kI}-\xb_0+(1-\sqt^2)\sum_{\nI=0}^{\kI-1}\xb_{\nI}\sqt^{2\nI}\,.
\ee
Combining \eqref{eqn:intgenFbtFsubs}, \eqref{eqn:sumsqtintML}, and \eqref{eqn:sumxdsqt}, we have
\bear
\lefteqn{
(1-\sqt^2)\left(\genFb(\sqt^2)-\sum_{\mI=0}^{\kI-1}\xb_\mI\sqt^{2\mI}\right)
}\nn\\
&=&
2\pi^{\sA-1}\sin\left(\frac{\pi\sA}{2}\right)
\int_0^\infty
\left\lbrack
\sin^2\left(\frac{\uMF x}{2\pi}\right)
+\frac{1}{2}\sum_{\nI=1}^{\kI-1}\PolyML_{2\nI}\left(\tfrac{i x}{\pi}\right)\sqt^{2\nI}
\right\rbrack
\frac{x^{-\sA}dx}{\sinh x}
+\xb_{\kI-1}\sqt^{2\kI}\,,
\nn
\eear
and using \eqref{eqn:intsinsqsqtML}, the last equation can be written as
\bear
\lefteqn{
(1-\sqt^2)\left(\genFb(\sqt^2)-\sum_{\mI=0}^{\kI-1}\xb_\mI\sqt^{2\mI}\right)
=\xb_{\kI-1}\sqt^{2\kI}
}\nn\\ &&
+
\frac{1}{2}\pi^{\sA-1}\sin\left(\frac{\pi\sA}{2}\right)
\int_0^\infty
\Bigl\lbrack
2
-\Bigl(\frac{1+\sqt}{1-\sqt}\Bigr)^{\frac{ix}{\pi}}
-\Bigl(\frac{1-\sqt}{1+\sqt}\Bigr)^{\frac{ix}{\pi}}
\nn\\ &&\qquad\qquad\qquad\qquad\qquad\qquad\qquad
+2\sum_{\nI=1}^{\kI-1}\PolyML_{2\nI}\left(\tfrac{i x}{\pi}\right)\sqt^{2\nI}
\Bigr\rbrack
\frac{x^{-\sA}dx}{\sinh x}\,.
\nn
\eear
Now we use \eqref{eqn:intrepsumkI} and \eqref{eqn:intsinSquaredML} to write
\bear
\lefteqn{
\sum_{\nI=0}^\infty\frac{\xb_{\nI+\kI}}{(2\nI+1)(2\nI+3)} 
=
\frac{1}{2}\int_0^1
(1-\sqt^2)\left(\genFb(\sqt^2)-\sum_{\mI=0}^{\kI-1}\xb_\mI\sqt^{2\mI}\right)\frac{d\sqt}{\sqt^{2\kI}}
}\nn\\
&=&
\frac{1}{2}\xb_{\kI-1}
+\frac{1}{4}\pi^{\sA-1}\sin\left(\frac{\pi\sA}{2}\right)
\int_0^\infty
\Bigl\lbrack
2\sum_{\nI=0}^{\kI-1}\frac{\PolyML_{2\nI}\left(\tfrac{i x}{\pi}\right)}{2\nI-2\kI+1}
\nn\\ && \qquad\qquad\qquad\qquad\qquad\qquad\qquad
-i\pi\PolyML_{2\kI-1}\left(\tfrac{i x}{\pi}\right)\coth x
\Bigr\rbrack
\frac{x^{-\sA}dx}{\sinh x}\,.
\nn\\ &&
\label{eqn:intrepsumkIX}
\eear
For $\epsilon>0$ we have, integrating by parts,
\bear
\lefteqn{
\int_\epsilon^\infty
\left\lbrack
2\sum_{\nI=0}^{\kI-1}\frac{\PolyML_{2\nI}\left(\tfrac{i x}{\pi}\right)}{2\nI-2\kI+1}
-i\pi\PolyML_{2\kI-1}\left(\tfrac{i x}{\pi}\right)\coth x
\right\rbrack
\frac{x^{-\sA}dx}{\sinh x}
}\nn\\
&=&
\int_\epsilon^\infty
\left\lbrack
2\sum_{\nI=1}^{\kI-1}\frac{\PolyML_{2\nI}\left(\tfrac{i x}{\pi}\right)}{2\nI-2\kI+1}
+\frac{2}{1-2\kI}
-i\pi\PolyML_{2\kI-1}\left(\tfrac{i x}{\pi}\right)\coth x
\right\rbrack
\frac{x^{-\sA}dx}{\sinh x}
\nn\\
&=&
2\int_\epsilon^\infty
\left(\sum_{\nI=1}^{\kI-1}\frac{\PolyML_{2\nI}\left(\tfrac{i x}{\pi}\right)}{2\nI-2\kI+1}\right)\frac{x^{-\sA}dx}{\sinh x}
+\frac{2}{1-2\kI}
\int_\epsilon^\infty
\frac{x^{-\sA}dx}{\sinh x}
\nn\\ &&
-i\pi\PolyML_{2\kI-1}\left(\tfrac{i\epsilon}{\pi}\right)
\frac{\epsilon^{-\sA}}{\sinh\epsilon}
\nn\\ &&
+\int_\epsilon^\infty
\PolyML_{2\kI-1}'\left(\tfrac{i x}{\pi}\right)
\frac{x^{-\sA}dx}{\sinh x}
+i\pi\sA\int_\epsilon^\infty
\PolyML_{2\kI-1}\left(\tfrac{i x}{\pi}\right)
\frac{x^{-\sA-1}dx}{\sinh x},
\nn\\
&&\label{eqn:intlimepsilon}
\eear
and using \eqref{eqn:DML}, we can write
\be\label{eqn:DPolyML}
\PolyML_{2\kI-1}'\left(\tfrac{i x}{\pi}\right)=
2\sum_{\jI=0}^{\kI-1}\frac{1}{2\jI+1}\PolyML_{2\kI-2\jI-2}\left(\tfrac{i x}{\pi}\right)
=
\frac{2}{2\kI-1}
-2\sum_{\nI=1}^{\kI-1}\frac{1}{2\nI-2\kI+1}\PolyML_{2\nI}\left(\tfrac{i x}{\pi}\right).
\ee
Therefore, \eqref{eqn:intlimepsilon} becomes
\bear
\lefteqn{
\int_\epsilon^\infty
\left\lbrack
2\sum_{\nI=0}^{\kI-1}\frac{\PolyML_{2\nI}\left(\tfrac{i x}{\pi}\right)}{2\nI-2\kI+1}
-i\pi\PolyML_{2\kI-1}\left(\tfrac{i x}{\pi}\right)\coth x
\right\rbrack
\frac{x^{-\sA}dx}{\sinh x}
}\nn\\
&=&
-i\pi\PolyML_{2\kI-1}\left(\tfrac{i\epsilon}{\pi}\right)
\frac{\epsilon^{-\sA}}{\sinh\epsilon}
+i\pi\sA\int_\epsilon^\infty
\PolyML_{2\kI-1}\left(\tfrac{i x}{\pi}\right)
\frac{x^{-\sA-1}dx}{\sinh x}\,.
\label{eqn:intlimepsilonB}
\eear
Using \eqref{eqn:MLoverx} we write
\be\label{eqn:M2kminus1overx}
\frac{1}{x}\PolyML_{2\kI-1}(\tfrac{ix}{\pi})=\frac{2i}{(2\kI-1)\pi}\sum_{\nI=0}^{\kI-1}\PolyML_{2\nI}(\tfrac{ix}{\pi}).
\ee
and therefore
\bear
\lefteqn{
i\pi\sA\int_\epsilon^\infty
\PolyML_{2\kI-1}\left(\tfrac{i x}{\pi}\right)
\frac{x^{-\sA-1}dx}{\sinh x} =
-\frac{2\sA}{2\kI-1}
\int_\epsilon^\infty
\left(\sum_{\nI=0}^{\kI-1}\PolyML_{2\nI}(\tfrac{i x}{\pi})\right)
\frac{x^{-\sA}dx}{\sinh x}
}\nn\\
&=&
-\frac{2\sA}{2\kI-1}\int_\epsilon^\infty
\frac{x^{-\sA}dx}{\sinh x}
-\frac{2\sA}{2\kI-1}
\int_\epsilon^\infty
\left(\sum_{\nI=1}^{\kI-1}\PolyML_{2\nI}(\tfrac{i x}{\pi})\right)
\frac{x^{-\sA}dx}{\sinh x}
\nn
\eear
Note that near $x=0$ we have $\PolyML_{2\nI}\left(\tfrac{i x}{\pi}\right)=O(x^2)$ for $\nI\ge 1$ and so the limit $\epsilon\rightarrow 0$ can be taken in the second term (since $\Re\sA<1$), and using \eqref{lem:xdML} we get
$$
\int_0^\infty
\left(\sum_{\nI=1}^{\kI-1}\PolyML_{2\nI}(\tfrac{i x}{\pi})\right)
\frac{x^{-\sA}dx}{\sinh x}
=-\frac{\pi^{1-\sA}}{\sin\left(\frac{\pi\sA}{2}\right)}\sum_{\nI=1}^{\kI-1}\xd_{\nI-1}
=-\frac{\pi^{1-\sA}}{\sin\left(\frac{\pi\sA}{2}\right)}(\xb_{\kI-1}-\xb_0).
$$
Altogether, \eqref{eqn:intlimepsilonB} becomes
\bear
\lefteqn{
\int_\epsilon^\infty
\left\lbrack
2\sum_{\nI=0}^{\kI-1}\frac{\PolyML_{2\nI}\left(\tfrac{i x}{\pi}\right)}{2\nI-2\kI+1}
-i\pi\PolyML_{2\kI-1}\left(\tfrac{i x}{\pi}\right)\coth x
\right\rbrack
\frac{x^{-\sA}dx}{\sinh x}
}\nn\\
&=&
-i\pi\PolyML_{2\kI-1}\left(\tfrac{i\epsilon}{\pi}\right)
\frac{\epsilon^{-\sA}}{\sinh\epsilon}
-\frac{2\sA}{2\kI-1}\int_\epsilon^\infty
\frac{x^{-\sA}dx}{\sinh x}
\nn\\ &&
+\frac{2\sA\pi^{1-\sA}}{(2\kI-1)\sin\left(\frac{\pi\sA}{2}\right)}(\xb_{\kI-1}-\xb_0)+O(\epsilon)
\,.
\label{eqn:intlimepsilonC}
\eear
From \eqref{eqn:M2kminus1overx} we get
$$
\PolyML_{2\kI-1}(\tfrac{i\epsilon}{\pi})=\frac{2i\epsilon}{(2\kI-1)\pi}+O(\epsilon^3).
$$
and therefore
\be\label{eqn:M2kminus1}
-i\pi\PolyML_{2\kI-1}\left(\tfrac{i\epsilon}{\pi}\right)
\frac{\epsilon^{-\sA}}{\sinh\epsilon}
=\frac{2\epsilon^{-\sA}}{2\kI-1}+O(\epsilon^{2-\sA}).
\ee
We also note that for $\Re\sA<2$ we have
\be\label{eqn:limintxminussA}
\lim_{\epsilon\rightarrow 0}\left\lbrack
\epsilon^{-\sA}-\sA\int_\epsilon^\infty
\frac{x^{-\sA}dx}{\sinh x}
\right\rbrack
=\frac{\sA\pi^{1-\sA}\eta(\sA)}{\sin\left(\frac{\pi\sA}{2}\right)},
\ee
as can be verified by integration by parts:
\bear
\sA\int_\epsilon^\infty
\frac{x^{-\sA}dx}{\sinh x}
&=&
-\int_\epsilon^\infty
(x^{-\sA})'\frac{x}{\sinh x}dx
=\frac{\epsilon^{1-\sA}}{\sinh\epsilon}
+\int_\epsilon^\infty
x^{-\sA}\left(\frac{x}{\sinh x}\right)'dx,
\nn
\eear
and therefore for $\Re\sA<2$,
$$
\lim_{\epsilon\rightarrow 0}\left\lbrack
\epsilon^{-\sA}-\sA\int_\epsilon^\infty
\frac{x^{-\sA}dx}{\sinh x}
\right\rbrack
=\int_0^\infty
x^{-\sA}\left(\frac{x}{\sinh x}\right)'dx.
$$
For $\Re\sA<0$ the same formula, using \eqref{eqn:intxsAsinh}, shows that
$$
\int_0^\infty
x^{-\sA}\left(\frac{x}{\sinh x}\right)'dx=\sA\int_\epsilon^\infty
\frac{x^{-\sA}dx}{\sinh x}
=\frac{\sA\pi^{1-\sA}\eta(\sA)}{\sin\left(\frac{\pi\sA}{2}\right)},
$$
and analytic continuation in $\sA$ shows that 
$$
\int_0^\infty
x^{-\sA}\left(\frac{x}{\sinh x}\right)'dx
=\frac{\sA\pi^{1-\sA}\eta(\sA)}{\sin\left(\frac{\pi\sA}{2}\right)}
=\frac{\sA\pi^{1-\sA}}{\sin\left(\frac{\pi\sA}{2}\right)}\xb_0
$$
holds for $\Re\sA<2$.

Combining \eqref{eqn:intrepsumkIX}, \eqref{eqn:intlimepsilonC}, \eqref{eqn:M2kminus1}, and \eqref{eqn:limintxminussA}, we get

\bear
\sum_{\nI=0}^\infty\frac{\xb_{\nI+\kI}}{(2\nI+1)(2\nI+3)} 
&=&
\frac{1}{2}\xb_{\kI-1}
+\frac{\sA}{2(2\kI-1)}\xb_{\kI-1}
\nn
\eear

\end{proof}

If $\sZ$ is such that $\zeta(\sZ)=0$, then $\xb_0=\eta(\sZ)=0$ and \eqref{eqn:recrelAllk} reduces to \eqref{eqn:OpWseq}.

\section{Proof of the sufficient condition in \thmref{thm:main}}\label{sec:sufpf}

We will now prove the second part of \thmref{thm:main} which states that, under certain conditions listed there, existence of a nontrivial solution to \eqref{eqn:OpW} implies $\zeta(\sA)=0$.

Let $\sA\in\C$, and let $\seqxc$ be a solution to
\bear
\lefteqn{
\begin{pmatrix}
1 & 0 & 0 & 0 & 0 & \cdots \\
\end{pmatrix} =
}\nn\\ &&
\begin{pmatrix}
\xc_1 & \xc_2 & \xc_3 & \xc_4 & \xc_5 & \cdots \\
\end{pmatrix}
\begin{pmatrix}
\frac{1}{1\cdot 3} & \frac{1}{3\cdot 5} & \frac{1}{5\cdot 7} & \frac{1}{7\cdot 9} & \cdots \\
 & & & & \\
-\frac{\sA}{2\cdot 3}-\frac{1}{2} &  \frac{1}{1\cdot 3} & \frac{1}{3\cdot 5} & \frac{1}{5\cdot 7} & \cdots \\
 & & & & \\
0 & -\frac{\sA}{2\cdot 5}-\frac{1}{2}  &  \frac{1}{1\cdot 3} & \frac{1}{3\cdot 5} & \cdots \\
 & & & & \\
0 &0  & -\frac{\sA}{2\cdot 7}-\frac{1}{2}  &  \frac{1}{1\cdot 3} & \cdots \\
 & & & & \\
0 & 0 & 0 & -\frac{\sA}{2\cdot 9}-\frac{1}{2} & \cdots \\
 & & & & \\
\vdots &\vdots & \vdots &\vdots & \ddots\\
\end{pmatrix}.
\nn\\ &&
\label{eqn:xcW1}
\eear
This is to be read as a series of identities for finite sums
\be\label{eqn:xcrl}
\delta_{\kI 1}=
-\left(\frac{\sA}{2(2\kI+1)}+\frac{1}{2}\right)\xc_{\kI+1}
+\sum_{n=0}^{\kI-1}\frac{\xc_{\kI-n}}{(2n+1)(2n+3)}\,,
\qquad
\kI=1,2,\dots
\ee
Now, suppose $\seqxb$ is as in \thmref{thm:main} and $\xb_1\neq 0.$
Suppose also that we can find a solution $(\xc_\kI)_{\kI=1}^\infty$ to \eqref{eqn:xcrl} with $\xc_\kI$'s that tend to zero fast enough as $\kI\rightarrow\infty$ so that the infinite sum implied by the matrix notation
\be\label{eqn:cWb0}
\begin{pmatrix}
\xc_1 & \xc_2 & \xc_3 & \xc_4 & \xc_5 & \cdots \\
\end{pmatrix}
\begin{pmatrix}
\frac{1}{1\cdot 3} & \frac{1}{3\cdot 5} & \frac{1}{5\cdot 7} & \frac{1}{7\cdot 9} & \cdots \\
 & & & & \\
-\frac{\sA}{2\cdot 3}-\frac{1}{2} &  \frac{1}{1\cdot 3} & \frac{1}{3\cdot 5} & \frac{1}{5\cdot 7} & \cdots \\
 & & & & \\
0 & -\frac{\sA}{2\cdot 5}-\frac{1}{2}  &  \frac{1}{1\cdot 3} & \frac{1}{3\cdot 5} & \cdots \\
 & & & & \\
0 &0  & -\frac{\sA}{2\cdot 7}-\frac{1}{2}  &  \frac{1}{1\cdot 3} & \cdots \\
 & & & & \\
0 & 0 & 0 & -\frac{\sA}{2\cdot 9}-\frac{1}{2} & \cdots \\
 & & & & \\
\vdots &\vdots & \vdots &\vdots & \ddots\\
\end{pmatrix}
\begin{pmatrix}
\xb_1 \\ \\ \xb_2 \\ \\ \xb_3 \\ \\ \xb_4 \\ \\ \vdots \\ \\ \vdots \\
\end{pmatrix}
\ee
is absolutely convergent.
Then, by \eqref{eqn:OpW}, the expression \eqref{eqn:cWb0} vanishes, and by \eqref{eqn:xcW1} it equals $\xb_1$, and so $\xb_1=0$, contradicting the assumptions of \thmref{thm:main}.

Thus, in order to prove the second part of \thmref{thm:main} it would suffice to show that if $\zeta(\sA)\neq 0$ there exists a solution of the recursion relation \eqref{eqn:xcrl} with the requisite absolute convergence property. Unfortunately, that does not appear to be the case, which complicates the rest of the argument a bit. What we will show below, however, is that if $\zeta(\sA)\neq 0$, then there exists a nonzero solution to \eqref{eqn:xcrl} with $\xc_\kI=O(\frac{1}{\kI})$, and using the expressions \eqref{eqn:OpWxd} for the recursion relation that $\xb_\kI$ satisfies, the $\xc_\kI=O(\frac{1}{\kI})$ behavior is sufficient to reach the same conclusion that $\xb_1=0$, and hence a contradiction.

Thus, the actual strategy will be to find a sequence $\seqxc$ that satisfies \eqref{eqn:xcrl} and falls off fast enough so that, using \eqref{eqn:OpWxd},
\be\label{eqn:actualxcxd}
\sum_{\kI=1}^\infty\xc_\kI\left\lbrack
-\frac{\sA}{2(2\kI-1)}\xb_{\kI-1}
+\frac{1}{2}\sum_{n=0}^\infty\frac{\xd_{\kI+n-1}}{2n+1}
\right\rbrack = 0
\ee
is absolutely convergent, and hence the terms can be rearranged to conclude that $\xb_1=0$, using \eqref{eqn:OpWxd} and \eqref{eqn:xcrl}. The conditions that we will use to show absolute convergence of \eqref{eqn:actualxcxd} are
\be\label{eqn:sumxcfinA}
\sum_{\kI=1}^\infty\frac{|\xc_\kI||\xb_{\kI-1}|}{2\kI-1}<\infty\qquad
\text{and}\qquad
\sum_{\nI=0}^\infty\sum_{\kI=1}^\infty\frac{|\xc_\kI||\xd_{\kI+\nI-1}|}{2\nI+1}<\infty.
\ee

To see where the condition $\zeta(\sA)\neq 0$ enters, we note that the recursion relation \eqref{eqn:xcrl} determines $\xc_2,\xc_3,\dots$ in terms of $\xc_1$, but $\xc_1$ is arbitrary. For a generic $\xc_1$, as we will see below, the conditions \eqref{eqn:sumxcfinA} cannot be satisfied. However, for generic $\sA$, $\xc_1$ can be adjusted so as to obtain a solution $\seqxc$ with good asymptotic behavior as $\kI\rightarrow\infty$. But the formula for $\xc_1$ [equation \eqref{eqn:xc1choice} below] has a denominator that vanishes when $\zeta(\sA)=0$, and so we can only find a suitable $\xc_1$ in this way when $\zeta(\sA)\neq 0$.
We proceed to show the details.

Define the generating functions
\be\label{eqn:DefgenFc}
\genFc(\tF)\eqdef\sum_{\kI=1}^\infty\frac{\xc_\kI\tF^{2\kI-1}}{2\kI-1}\,,
\qquad
\genFcD(\tF)\eqdef
\genFc'(\tF)=\sum_{\kI=1}^\infty\xc_\kI\tF^{2\kI-2}\,.
\ee
The recursion relation \eqref{eqn:xcrl} is equivalent to
\be\label{eqn:genFcODE}
0 =\tF^2
+\frac{\sA}{2}\left\lbrack\frac{1}{\tF}\genFc(\tF)-\xc_1\right\rbrack
-\frac{1}{2}\xc_1
-\frac{1}{4\tF}\genFc'(\tF)(\tF^2-1)\log\left(\frac{1+\tF}{1-\tF}\right).
\ee
Changing variables to $\uF$, defined by $\tF=\tanh\uF$,
we find the solution to the first order linear differential equation \eqref{eqn:genFcODE} in the form
\be\label{eqn:genFcInt}
\genFc=\frac{1}{\uF^\sA}\int_0^\uF\left\lbrack
(\sA+1)\xc_1\tanh\vF
-2\tanh^3\vF
\right\rbrack\vF^{\sA-1}d\vF
+\frac{\ConstF}{\uF^\sA}\,,\qquad
\uF\eqdef\frac{1}{2}\log\left(\frac{1+\tF}{1-\tF}\right),
\ee
where $\ConstF$ is an arbitrary integration constant.
The integral converges, since we assume $1>\Re\sA>-1$.

In order to have a good analytic behavior at $\uF=0$ (hence at $\tF=0$), we need to set
$\ConstF=0$, and in order for $\xc_\kI$ to tend to zero as $\kI\rightarrow\infty$, we at least need $\genFc$ to be analytic in the whole disk $|\tF|<1$.
We will go further and require $\genFc$ to converge as $\tF\rightarrow 1$, i.e., as $\uF\rightarrow\infty$.
For $-1<\Re\sA<0$, we can write the $\uF\rightarrow\infty$ limit of the coefficient of $\xc_1$ in \eqref{eqn:genFcInt} as
\be\label{eqn:limCoeffxc1}
\int_0^\infty\vF^{\sA-1}\tanh\vF\,d\vF
=-\frac{4\Gamma(\sA)}{2^{\sA+1}}\eta(\sA).
\ee
Unfortunately, \eqref{eqn:limCoeffxc1} does not converge for $\Re\sA>0$, and so we need to rewrite \eqref{eqn:genFcInt} in a way that will have a manifestly convergent limit as $\uF\rightarrow\infty$. We achieve this by separating the integral into $\int_0^1$ and $\int_1^\infty$. Thus, for $1>\Re\sA>-1$ we rewrite \eqref{eqn:genFcInt} with $\ConstF=0$ as
\bear
\genFc &=&
\frac{1}{\uF^\sA}(\sA+1)\xc_1\int_0^1\vF^{\sA-1}\tanh\vF d\vF
+\frac{1}{\uF^\sA}(\sA+1)\xc_1\int_1^\uF\vF^{\sA-1}\left(\tanh\vF-1\right)d\vF
\nn\\ &&
-\frac{2}{\uF^\sA}\int_0^1\vF^{\sA-1}\tanh^3\vF d\vF
-\frac{2}{\uF^\sA}\int_1^\uF\vF^{\sA-1}\left(\tanh^3\vF-1\right)d\vF
\nn\\ &&
+\frac{1}{\uF^\sA}(\sA+1)\xc_1\int_1^\uF\vF^{\sA-1}d\vF
-\frac{2}{\uF^\sA}\int_1^\uF\vF^{\sA-1}d\vF
\nonumber\\
&=&
\frac{1}{\uF^\sA}(\sA+1)\xc_1\left\lbrack
\int_0^1\vF^{\sA-1}\tanh\vF d\vF
+\int_1^\uF\vF^{\sA-1}\left(\tanh\vF-1\right)d\vF
-\frac{1}{\sA}
\right\rbrack
\nn\\ &&
-\frac{2}{\uF^\sA}\left\lbrack
\int_0^1\vF^{\sA-1}\tanh^3\vF d\vF
+\int_1^\uF\vF^{\sA-1}\left(\tanh^3\vF-1\right)d\vF
-\frac{1}{\sA}
\right\rbrack
\nn\\ &&
+\left(\frac{1}{\sA}+1\right)\xc_1
-\frac{2}{\sA}\,.
\nonumber\\ &&
\label{eqn:genFcExtended}
\eear
Taking the derivative of \eqref{eqn:genFcExtended} we get
\bear
\genFcD &=&
-\frac{\sA(\sA+1)}{(1-\tF^2)\uF^{\sA+1}}\xc_1\left\lbrack
\int_0^1\vF^{\sA-1}\tanh\vF d\vF
+\int_1^\uF\vF^{\sA-1}\left(\tanh\vF-1\right)d\vF
-\frac{1}{\sA}
\right\rbrack
\nn\\ &&
+\frac{2\sA}{(1-\tF^2)\uF^{\sA+1}}\left\lbrack
\int_0^1\vF^{\sA-1}\tanh^3\vF d\vF
+\int_1^\uF\vF^{\sA-1}\left(\tanh^3\vF-1\right)d\vF
-\frac{1}{\sA}
\right\rbrack
\nonumber\\ &&
+\frac{2(1+\tF+\tF^2)-(\sA+1)\xc_1}{(1+\tF)\uF}
\,.
\nonumber\\ &&
\label{eqn:genFcDerivative}
\eear
where we substituted $\tF=\tanh\uF$.

The behavior of $\genFcD$ in the limit $\uF\rightarrow\infty$ ($\tF\rightarrow 1$) will be governed by the first two terms on the RHS of \eqref{eqn:genFcDerivative}. $\genFcD$ is at its best behavior as $\uF\rightarrow\infty$ if we set
\bear
\xc_1 &=&
\left(\frac{2}{\sA+1}\right)
\frac{
\int_0^1\vF^{\sA-1}\tanh^3\vF d\vF
+\int_1^\infty\vF^{\sA-1}\left(\tanh^3\vF-1\right)d\vF
-\frac{1}{\sA}
}{
\int_0^1\vF^{\sA-1}\tanh\vF d\vF
+\int_1^\infty\vF^{\sA-1}\left(\tanh\vF-1\right)d\vF
-\frac{1}{\sA}
}\,,
\nn\\
&&
\label{eqn:xc1choiceRatio}
\eear
so that $(1-\tF^2)\uF^{\sA+1}\genFcD\rightarrow 0$ as $\tF\rightarrow 1$.
We can simplify \eqref{eqn:xc1choiceRatio} as follows. From \eqref{eqn:limCoeffxc1} we get
\be\label{eqn:limCoeffxc1Ex}
\int_0^1\vF^{\sA-1}\tanh\vF d\vF
+\int_1^\infty\vF^{\sA-1}\left(\tanh\vF-1\right)d\vF
-\frac{1}{\sA}
=-\frac{4\Gamma(\sA)}{2^{\sA+1}}\eta(\sA).
\ee
While \eqref{eqn:limCoeffxc1} only converges for $-1<\Re\sA<0$, \eqref{eqn:limCoeffxc1Ex} converges for all $\Re\sA>-1$.
Similarly,
\be\label{eqn:inttanhEtaAC}
\int_0^1\vF^{\sA-1}\tanh^3\vF d\vF
+\int_1^\infty\vF^{\sA-1}\left(\tanh^3\vF-1\right)d\vF
-\frac{1}{\sA}
=-\frac{4\Gamma(\sA)}{2^{\sA+1}}\left\lbrack 2\eta(\sA-2)+\eta(\sA)\right\rbrack,
\ee
which is valid for $\Re\sA>-3$, and follows by analytically continuing the identity
\be\label{eqn:inttanhEta}
\int_0^\infty\vF^{\sA-1}\tanh^3\vF\,d\vF 
=-\frac{4\Gamma(\sA)}{2^{\sA+1}}\left\lbrack 2\eta(\sA-2)+\eta(\sA)\right\rbrack,
\qquad
(0>\Re\sA>-3).
\ee
The latter can easily be derived by integration by parts and Taylor expansion in $e^{-2\vF}$.
Using \eqref{eqn:inttanhEtaAC} and \eqref{eqn:inttanhEtaAC} we can rewrite \eqref{eqn:xc1choiceRatio} as
\bear
\xc_1 &=&
\left(\frac{2}{\sA+1}\right)
\frac{
2\eta(\sA-2)+\eta(\sA)
}{
\eta(\sA)
}\,.
\label{eqn:xc1choice}
\eear
Thus, if $\zeta(\sA)\neq 0$, and hence $\eta(\sA)\neq 0$, we can set $\xc_1$ as in \eqref{eqn:xc1choice}, and for that value the generating function $\genFcD$ in \eqref{eqn:genFcDerivative} will behave better than $O(\frac{1}{(1-\tF^2)\uF^{\sA+1}})$ as $\uF\rightarrow\infty$.
For such a solution, we will now show that \eqref{eqn:sumxcfinA} is satisfied.
We first need to show the following.

\begin{proposition}
\label{prop:xcOk}
Let $\sA$ be such that $\zeta(\sA)\neq 0$, with $0<\Re\sA<1$, and let $\seqxc$ be a solution to \eqref{eqn:xcrl} with $\xc_1$ given by \eqref{eqn:xc1choice}, which is well defined since $\eta(\sA)\neq 0$ by assumption. Then $\xc_\kI=O(\frac{1}{\kI})$.

\end{proposition}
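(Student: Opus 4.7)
The plan is to represent $\xc_\kI$ as a Cauchy integral of $\genFcD(\tF)/\tF^{2\kI-1}$, deform the contour around the branch cuts of $\genFcD$, and exploit the precise cancellation engineered by \eqref{eqn:xc1choice} to bound the resulting cut integrals by $O(1/\kI)$.

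Concretely, I would begin from
\begin{equation*}
\xc_\kI=\frac{1}{2\pi i}\oint_{|\tF|=r}\frac{\genFcD(\tF)\,d\tF}{\tF^{2\kI-1}},\qquad 0<r<1.
\end{equation*}
From \eqref{eqn:genFcDerivative}, $\genFcD$ is analytic on $\C\setminus\bigl((-\infty,-1]\cup[1,\infty)\bigr)$, and since $\uF\to\pm i\pi/2$ as $\tF\to\infty$ the third term dominates, giving $\genFcD(\tF)=O(\tF)$ at infinity. For $\kI\ge 2$ the integrand then decays like $O(\tF^{2-2\kI})$ there, so the contour may be blown up past infinity. This reduces $\xc_\kI$ to a sum of two Hankel-type integrals along the cuts $[1,\infty)$ and $(-\infty,-1]$, which by the change of variable $\tF\to-\tF$ have the same structure. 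Hence it suffices to control
\begin{equation*}
\int_1^\infty\frac{\Delta\genFcD(x)}{x^{2\kI-1}}\,dx,\qquad \Delta\genFcD(x)\eqdef\genFcD_+(x)-\genFcD_-(x),
\end{equation*}
where $\genFcD_\pm$ are the boundary values from the upper and lower half planes.

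The main work is to show that $\Delta\genFcD(x)$ is uniformly bounded on some interval $[1,1+\delta]$ and grows at most linearly at infinity. On the cut, $\uF_\pm=\tfrac12\log\tfrac{x+1}{x-1}\pm i\pi/2$, so $|\uF_\pm|\to\infty$ as $x\to 1^+$ and $|\uF_\pm|\to\pi/2$ as $x\to\infty$. Near $x=1$, the exponential decay $\tanh\vF-1,\tanh^3\vF-1=O(e^{-2\vF})$ shows that the bracketed expressions in the first two terms of \eqref{eqn:genFcDerivative}, evaluated at $\uF_\pm$, differ from their $\uF\to\infty$ limits by $O(|\uF|^{\Re\sA-1}e^{-2\Re\uF})$; combined with $(1-\tF^2)^{-1}\sim \mathrm{const}\cdot e^{2\Re\uF}$ and $|\uF|^{-\sA-1}$, this produces an $O(|\uF|^{-2})$ contribution to $\Delta\genFcD$ there. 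The role of \eqref{eqn:xc1choice} is precisely to cancel the $\uF$-independent parts of the brackets that would otherwise yield an unbounded discontinuity at $x=1$. The jump across the cut of the third term is also $O(|\uF|^{-2})$ near $x=1$ (since $1/\uF_+-1/\uF_-=(\uF_--\uF_+)/(\uF_+\uF_-)=O(|\uF|^{-2})$). At infinity, the first two terms contribute $O(x^{-2})$ and the third contributes $O(x)$ to $\Delta\genFcD$.

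With these bounds in hand, the elementary estimate
\begin{equation*}
\left|\int_1^\infty\frac{\Delta\genFcD(x)}{x^{2\kI-1}}\,dx\right|\le C\int_1^\infty\frac{x\,dx}{x^{2\kI-1}}=O\!\left(\frac{1}{\kI}\right)
\end{equation*}
concludes the proof. The main obstacle I anticipate is the branch-cut bookkeeping: one must fix a consistent branch of $\uF^{\sA+1}$ along the analytic continuation from the physical domain $\tF\in(0,1)$, compute the boundary values on the cut of the integrals $\int_1^{\uF}\vF^{\sA-1}(\tanh^k\vF-1)\,d\vF$ appearing in \eqref{eqn:genFcExtended}, and verify rigorously the cancellation near $x=1$ that the choice \eqref{eqn:xc1choice} is designed to produce. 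Once that analytic framework is in place, the estimates themselves are straightforward.
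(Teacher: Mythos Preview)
Your approach and the paper's are the same contour deformation viewed in different coordinates. The paper passes to $\uF=\tfrac12\log\tfrac{1+\tF}{1-\tF}$ and pushes the circle to the boundary lines $\Im\uF=\pm\pi/2$ of the strip (keeping small semicircles of fixed radius around $\pm i\pi/2$); under $\tF=\tanh\uF$ those lines are exactly your cuts $(-\infty,-1]\cup[1,\infty)$, and the semicircle around $i\pi/2$ corresponds to the neighbourhood of $\tF=\infty$ that you discard. Where the two arguments diverge is only in the final estimation: the paper introduces a $\kI$-dependent cutoff $\betaC_\kI=\tfrac12\log\bigl[(\kI-1)/\log(\kI-1)\bigr]$ and bounds the ``near'' and ``far'' pieces separately, whereas you aim for a single uniform bound $|\Delta\genFcD(x)|\le Cx$ on $[1,\infty)$ and then integrate $\int_1^\infty x^{2-2\kI}\,dx$ directly. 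Your route is a bit more streamlined and avoids the $\kI$-dependent split; the paper's split gives somewhat sharper intermediate control but both reach $O(1/\kI)$.

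One inaccuracy to fix: your claim that at infinity ``the first two terms contribute $O(x^{-2})$'' fails for the $\tanh^3$ bracket. As $\uF_\pm\to\pm i\pi/2$ one has $\tanh^3\vF-1\sim(\vF\mp i\pi/2)^{-3}$, so $\int_1^{\uF_\pm}\vF^{\sA-1}(\tanh^3\vF-1)\,d\vF$ diverges like $(\uF_\pm\mp i\pi/2)^{-2}\sim x^2$; combined with the $1/(1-\tF^2)$ prefactor that term of $\genFcD_\pm$ is $O(1)$, not $O(x^{-2})$. This is harmless for the conclusion since $O(1)\le O(x)$ and the third term already forces the $O(x)$ envelope, but the stated order should be corrected.
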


\begin{proof}
We will estimate $\xc_\kI$ for large $\kI$ using the generating function $\genFcD$ given in \eqref{eqn:genFcDerivative}, but first we need to rewrite it.
Recall that $\uF$ is defined in \eqref{eqn:genFcInt} as $\uF\eqdef\frac{1}{2}\log\left(\frac{1+\tF}{1-\tF}\right)$, and it is analytic in $\tF$ anywhere away from the two real rays $(-\infty,-1]\cup[1,\infty)$. The inverse relation between $\tF$ and $\uF$ is $\tF=\tanh\uF$.
Next, we note that the expression \eqref{eqn:genFcInt} (with $\ConstF=0$ as we discussed above),
\be\label{eqn:genFcIntZ}
\genFc(\uF)=\frac{1}{\uF^\sA}\int_0^\uF\left\lbrack
(\sA+1)\xc_1\tanh\vF
-2\tanh^3\vF
\right\rbrack\vF^{\sA-1}d\vF
\,,
\ee
defines an analytic function of $\uF$ at $\uF=0$.
Since $\tanh\uF$ only has poles at $\uF=i(2n+1)\frac{\pi}{2}$ ($n=0,1,2,\dots$), $\genFc$ can be analytically continued to the entire $\uF$-plane except for cuts which we can choose to be along the segments $[(4n+1)\frac{i\pi}{2},(4n+3)\frac{i\pi}{2}]$ ($n\in\Z$). We will only need the analytic continuation to the strip $\{|\Im\uF|\le\frac{\pi}{2}\}\setminus\{-\frac{i\pi}{2},\frac{i\pi}{2}\}$ depicted in \figref{fig:PathsP12}. The expression \eqref{eqn:genFcDerivative} for $\genFcD(\uF)=\genFc'(\uF)$ is also analytic in this strip.

For $\xc_1$ given by \eqref{eqn:xc1choice}, we can rewrite \eqref{eqn:genFcDerivative} in a way that reflects a better behavior at $\uF\rightarrow\infty$.
First, we use the equivalent form \eqref{eqn:xc1choiceRatio} to write
\bear
0 &=&
\frac{\sA}{(1-\tF^2)\uF^{\sA+1}}(\sA+1)\xc_1\left\lbrack
\int_0^1\vF^{\sA-1}\tanh\vF d\vF
+\int_1^\infty\vF^{\sA-1}\left(\tanh\vF-1\right)d\vF
-\frac{1}{\sA}
\right\rbrack
\nn\\ &&
-\frac{2\sA}{(1-\tF^2)\uF^{\sA+1}}\left\lbrack
\int_0^1\vF^{\sA-1}\tanh^3\vF d\vF
+\int_1^\infty\vF^{\sA-1}\left(\tanh^3\vF-1\right)d\vF
-\frac{1}{\sA}
\right\rbrack,
\nn\\ &&
\label{eqn:genFcExtendeduFCpeff}
\eear
and then we add \eqref{eqn:genFcExtendeduFCpeff} to \eqref{eqn:genFcDerivative} to get
\bear
\genFcD &=&
\frac{\sA}{(1-\tF^2)\uF^{\sA+1}}(\sA+1)\xc_1
\int_\uF^\infty\vF^{\sA-1}\left(\tanh\vF-1\right)d\vF
\nn\\ &&
-\frac{2\sA}{(1-\tF^2)\uF^{\sA+1}}
\int_\uF^\infty\vF^{\sA-1}\left(\tanh^3\vF-1\right)d\vF
+\frac{2(1+\tF+\tF^2)-(\sA+1)\xc_1}{(1+\tF)\uF}
\,.
\nonumber\\ &&
\label{eqn:genFcDerivativExtendedC}
\eear
The integrals are to be performed along a path within the strip $\{|\Im\uF|\le\frac{\pi}{2}\}\setminus\{-\frac{i\pi}{2},\frac{i\pi}{2}\}$.

Now we are ready to estimate $\xc_\kI$ for large $\kI$,
using the definition \eqref{eqn:DefgenFc}.
For $\kI\ge 1$, we calculate $\xc_\kI$ with a contour integral around the origin,
\bear
\xc_\kI &=& 
\frac{1}{2\pi i}\oint_{\ContourC_3}
\genFcD(\uF(\tF))\frac{d\tF}{\tF^{2\kI-1}}
=
\frac{1}{2\pi i}\oint_{\ContourC_3'}
\frac{\genFcD(\uF)d\uF}{\cosh^2\uF\tanh^{2\kI-1}\uF}\,,
\label{eqn:xckContourI}
\eear
where $\ContourC_3$ is a circle around $\tF=0$ of radius less than $1$, and $\ContourC_3'$ is the image of $\ContourC_3$ under the map $\tF\mapsto\uF(\tF)$.
The integrand is well-defined as long as $\sinh\uF\neq 0$.
We now deform the contour $\ContourC_3'$ into two paths $\PathP_1',\PathP_2'$ that run along $\Im\uF=\pm \frac{\pi}{2}$, avoiding the singularities at $\uF=\pm i\frac{\pi}{2}$ (where $\sinh\uF=0$) by going around them along small semicircles (see \figref{fig:PathsP12}).

\vskip 12pt
\begin{figure}[t]
\begin{picture}(400,140)
\put(200,70){\begin{picture}(0,0)

\put(-80,50){\vector(-1,0){70}}
\put(-10,50){\line(-1,0){70}}
\put(80,50){\line(-1,0){70}}
\put(150,50){\vector(-1,0){70}}
\put(155,45){$\PathP_1'$}

\put(-165,-55){$\PathP_2'$}
\put(-150,-50){\vector(1,0){70}}
\put(-80,-50){\line(1,0){70}}
\put(10,-50){\line(1,0){70}}
\put(80,-50){\vector(1,0){70}}

\qbezier(10,50)(10,44)(5,41)
\qbezier(5,41)(0,38)(-5,41)
\qbezier(-5,41)(-10,44)(-10,50)
\put(0,50){\circle*{4}}
\put(-2,60){$\frac{i\pi}{2}$}

\put(-150,-50){\vector(1,0){70}}
\put(-80,-50){\line(1,0){70}}
\put(10,-50){\line(1,0){70}}
\put(80,-50){\vector(1,0){70}}

\qbezier(10,-50)(10,-44)(5,-41)
\qbezier(5,-41)(0,-38)(-5,-41)
\qbezier(-5,-41)(-10,-44)(-10,-50)
\put(0,-50){\circle*{4}}
\put(-2,-62){$-\frac{i\pi}{2}$}

\put(0,0){\circle*{4}}
\put(-3,5){$0$}
\put(0,0){\circle{40}}
\put(20,0){\vector(0,1){0}}
\put(22,0){$\ContourC_3'$}

\thinlines
\multiput(3,3)(12,12){4}{\line(1,1){10}}
\put(50,50){\circle*{4}}
\put(47,53){$\uF$}

\put(110,50){\circle*{4}}
\put(105,57){$\frac{i\pi}{2}+\betaC_\kI$}

\put(-110,50){\circle*{4}}
\put(-115,57){$\frac{i\pi}{2}-\betaC_\kI$}

\end{picture}}
\end{picture}
\caption{
The range $|\Im\uF|\le\frac{\pi}{2}$ of the complex $\uF$-plane, with $\pm\frac{i\pi}{2}$ excluded, is part of the domain of analyticity of $\genFcD$. Also depicted are contours in the $\uF$-plane used in the evaluation of \eqref{eqn:xckContourI}.
The contour $\ContourC_3'$ is a small circle around the origin.
The integral \eqref{eqn:xckContourI} can be deformed to two paths, $\PathP_1'$ and $\PathP_2'$, that run along $\Im\uF=\pm\frac{\pi}{2}$, respectively, avoiding the singularities at $\uF=\pm\frac{i\pi}{2}$ with small semicircles.
The dashed line from $0$ to $\uF$ is the path for the $\int(\cdots)d\vF$ integral \eqref{eqn:genFcDerivativExtendedC} which defines $\genFcD(\uF)$.
The marked points $\frac{i\pi}{2}\pm\betaC_\kI$ are used to separate the ``large $\uF$'' from the ``not-large $\uF$'' part of the path.
$\PathP_3'$ (not shown) is defined as the part of the path $\PathP_1'$ from $\frac{i\pi}{2}+\betaC_\kI$ to $\frac{i\pi}{2}-\betaC_\kI$.
}
\label{fig:PathsP12}
\end{figure}

We have
\bear
\oint_{\ContourC_3'}
\frac{\genFcD(\uF)d\uF}{\cosh^2\uF\tanh^{2\kI-1}\uF}
&=&
\int_{\PathP_1'}
\frac{\genFcD(\uF)d\uF}{\cosh^2\uF\tanh^{2\kI-1}\uF}
+
\int_{\PathP_2'}
\frac{\genFcD(\uF)d\uF}{\cosh^2\uF\tanh^{2\kI-1}\uF}
\nn\\
&=&
2\int_{\PathP_1'}
\frac{\genFcD(\uF)d\uF}{\cosh^2\uF\tanh^{2\kI-1}\uF}
=
2\int_{\PathP_1'}
\frac{(1-\tF^2)\genFcD(\uF)d\uF}{\tanh^{2\kI-1}\uF}
\label{eqn:IntgenFcDOnP1}
\eear
since $\genFcD(\uF)$ is an even function of $\uF$.

Along $\PathP_1'$, for large $\uF$ the integrand $\frac{\genFcD(\uF)}{\cosh^2\uF\tanh^{2\kI-1}}$ is small because $|\tanh\uF|>1$ (along that portion of the path), $\cosh\uF$ is large, and $\genFcD(\uF)$ is small. On the other hand, on the part of $\PathP_1'$ where $\uF$ is not large, $\genFcD(\uF)$ and $1/\cosh^2\uF$ are bounded (by a $\kI$-independent bound), and $1/\tanh^{2\kI-1}\uF$ is small. To put a bound on $\genFcD$ for not-large $\uF$, we will use the derivative of \eqref{eqn:genFcIntZ} to write 
\bear
\frac{\genFcD(\uF)}{\cosh^2\uF}&=&-\frac{\sA}{\uF^{\sA+1}}\int_0^\uF\left\lbrack
(\sA+1)\xc_1\tanh\vF
-2\tanh^3\vF
\right\rbrack\vF^{\sA-1}d\vF
\nonumber\\ &&
+\frac{1}{\uF}\left\lbrack
(\sA+1)\xc_1\tanh\uF
-2\tanh^3\uF
\right\rbrack
\label{eqn:genFcIntZDerivative}
\,,
\eear
with the integral taken along a straight line from $0$ to $\uF$.

On the other hand, to show that $\genFcD(\uF)$ is small for large $\uF$, we will use the expression \eqref{eqn:genFcDerivativExtendedC}.
We now turn to the technical details.


\subsection{Breaking $\PathP_1$ up into large and not-large $\uF$}
We need to separate the part of $\PathP_1$ with large $|\Re\uF|$ from the rest of the path.
We pick a $\betaC_\kI>0$, whose precise value will be determined later on, and we mark the points $i\frac{\pi}{2}\pm\betaC_\kI$ on $\PathP_1$ (see \figref{fig:PathsP12}).
For now, we only assume that $\betaC_\kI\rightarrow\infty$ as $\kI\rightarrow\infty$.
Let the semicircle $\SemiC_1$ (in the middle of $\PathP_1$) be of radius $\SemiRad>0$, and
separate the integral
\be\label{eqn:PathP1Only}
\int_{\PathP_1'}
\frac{\genFcD(\uF)d\uF}{\cosh^2\uF\tanh^{2\kI-1}\uF},
\ee
into
$$
\int_{\PathP_1'} = \int_{\frac{i\pi}{2}-\infty}^{\frac{i\pi}{2}-\betaC_\kI}
+\int_{\frac{i\pi}{2}-\betaC_\kI}^{\frac{i\pi}{2}-\SemiRad}
+\int_{\SemiC_1}
+\int_{\frac{i\pi}{2}+\SemiRad}^{\frac{i\pi}{2}+\betaC_\kI}
+\int_{\frac{i\pi}{2}+\betaC_\kI}^{\frac{i\pi}{2}+\infty}
\,.
$$
We then denote the portion of the path along ``not-large $\uF$'' by
$$
\int_{\PathP_3'}\eqdef
\int_{\frac{i\pi}{2}-\betaC_\kI}^{\frac{i\pi}{2}+\betaC_\kI}
\eqdef
\int_{\frac{i\pi}{2}-\betaC_\kI}^{\frac{i\pi}{2}-\SemiRad}
+\int_{\SemiC_1}
+\int_{\frac{i\pi}{2}+\SemiRad}^{\frac{i\pi}{2}+\betaC_\kI}
$$
so that
\be\label{eqn:PathP1BreakUp}
\int_{\PathP_1'} = \int_{\frac{i\pi}{2}-\infty}^{\frac{i\pi}{2}-\betaC_\kI}
+\int_{\PathP_3'}
+\int_{\frac{i\pi}{2}+\betaC_\kI}^{\frac{i\pi}{2}+\infty}
\,.
\ee


\subsection{A bound for not-large $\uF$}

Along the entire path $\PathP_1'$ the expression $\genFcD(\uF)/\cosh^2\uF=(1-\tF^2)\genFcD(\uF)$ is bounded by a $\kI$-independent bound,
\be\label{eqn:BoundOnFcDOverCoshOnP1}
\left|\frac{\genFcD(\uF)}{\cosh^2\uF}\right|<\ConstC_{6}\qquad
\text{along $\PathP_1'$},
\ee
where $\ConstC_{6}$ is a ($\sA$ and $\SemiRad$ dependent) constant [to be defined in \eqref{eqn:DefConstC6} below].
To see this we need the expression \eqref{eqn:genFcIntZDerivative}.
We set $\vF=\factorL\uF$ with $0\le\factorL\le 1$, and then
\bear
\frac{\genFcD}{\cosh^2\uF}&=&-\frac{\sA}{\uF}\int_0^1\left\lbrack
(\sA+1)\xc_1\tanh(\factorL\uF)
-2\tanh^3(\factorL\uF)
\right\rbrack\factorL^{\sA-1}d\factorL
\nn\\ &&
+\frac{(\sA+1)\xc_1\tanh\uF
-2\tanh^3\uF}{\uF}
\,.
\nn\\
&&\label{eqn:genFcIntZfL}
\eear
Now, for $\uF\in\PathP_3'$ the argument $\vF=\factorL\uF$, which appears in the integrand of \eqref{eqn:genFcIntZfL}, is in the domain $\DomainD$ defined by excising from the strip $|\Im\vF|\le\frac{\pi}{2}$ two semicircles of radius, say, $\SemiRad/2$ around $\frac{i\pi}{2}$:
$$
\DomainD=\{\vF:\quad\text{$|\Im\vF|\le\frac{\pi}{2}$ and
$\left|\vF\pm\frac{i\pi}{2}\right|\ge\frac{\SemiRad}{2}$}\}.
$$
For $\vF\in\DomainD$, we easily find that $\tanh\vF$ is bounded by a $\kI$-independent bound, since it is analytic in $\DomainD$ and $\tanh\vF\rightarrow\pm 1$ as $\vF\rightarrow\infty$ within $\DomainD$. Let that bound be $\ConstC_{7}$ so that
$$
|\tanh\vF|\le\ConstC_{7}\qquad\text{for $\vF\in\DomainD$.}
$$
(It can be checked that $\tanh\vF$ attains its maximum at $\vF=\pm\frac{i\pi}{2}\pm\frac{\SemiRad}{2}$ so we can set $\ConstC_{7}=\coth(\frac{\SemiRad}{2})$, but the precise value of $\ConstC_{7}$ is not important.) 
Also, 
$$
|\uF|\ge \frac{\pi-\SemiRad}{2}\,.
$$
Then, from \eqref{eqn:genFcIntZfL} we find an upper bound on $\genFcD/\cosh^2\uF$:
\be\label{eqn:DefConstC6}
\ConstC_{6}\eqdef
\frac{2}{\pi-\SemiRad}
\left(
\frac{|\sA|}{\Re\sA}+1
\right)
\left(|\sA+1||\xc_1|\ConstC_{7}+2\ConstC_{7}^3\right)
>
\left|\frac{\genFcD(\uF)}{\cosh^2\uF}\right|
\,\quad
\text{for $\uF$ on $\PathP_1'$.}
\ee
This is what we claimed in \eqref{eqn:BoundOnFcDOverCoshOnP1}.

Next, we need an upper bound on $1/\tanh^{2\kI-1}\uF$ in \eqref{eqn:PathP1Only}. At this point we restrict to $\uF$ along $\PathP_3'$.
Along that path $|\tanh\uF|$ has local minima at $\uF=i(\frac{\pi}{2}-\SemiRad)$ and at $\uF=\frac{i\pi}{2}\pm\betaC_\kI$. The value at $i(\frac{\pi}{2}-\SemiRad)$ is
$$
\left|\tanh\left\lbrack i\left(\frac{\pi}{2}-\SemiRad\right)\right\rbrack\right|=
\cot\SemiRad>1,
$$
and the value at $\frac{i\pi}{2}\pm\betaC_\kI$ is
$$
\left|\tanh\left(\frac{i\pi}{2}\pm\betaC_\kI\right)\right|
=1+\frac{2}{e^{2\betaC_\kI}-1}>1.
$$
For sufficiently large $\betaC_\kI$ (and hence for sufficiently large $\kI$) the value at $\frac{i\pi}{2}\pm\betaC_\kI$ is the global minimum. Therefore, since $e^{2\betaC_\kI}\ge 1$, 
\be\label{eqn:BoundTanhku}
\frac{1}{|\tanh^{2\kI-1}\uF|}<\left(1+\frac{2}{e^{2\betaC_\kI}}\right)^{1-2\kI}
<\exp\left\lbrack
-(2\kI-1) e^{-2\betaC_\kI}
\right\rbrack,
\ee
since $0<e^{-2\betaC_\kI}<1$ and $e^x<1+\frac{2}{x}$ for $0<x<1$.
Thus, for sufficiently large $\kI$,
\bear
\lefteqn{
\left|
\int_{\PathP_3'}
\frac{\genFcD(\uF)d\uF}{\cosh^2\uF\tanh^{2\kI-1}\uF}
\right|
\le
\ConstC_{6}
\exp\left\lbrack
-(2\kI-1) e^{-2\betaC_\kI}
\right\rbrack\int_{\PathP_3'}|d\uF|
}\nn\\
&<&
\ConstC_{6}
\exp\left\lbrack
-(2\kI-1) e^{-2\betaC_\kI}
\right\rbrack
(\pi\SemiRad+2\betaC_\kI)
<
4\ConstC_{6}
\exp\left\lbrack
-(2\kI-1) e^{-2\betaC_\kI}
\right\rbrack\betaC_\kI,
\nn\\ &&
\label{eqn:PathP1OnlyBound}
\nn
\eear
where we have used \eqref{eqn:BoundOnFcDOverCoshOnP1} and \eqref{eqn:BoundTanhku}.

We now take (for $\kI>1$),
\be\label{eqn:betaCvalue}
\betaC_\kI=\frac{1}{2}\log\left\lbrack\frac{\kI-1}{\log(\kI-1)}\right\rbrack
\ee
so that for $\kI>3$,
$$
\exp\left\lbrack
-(2\kI-1) e^{-2\betaC_\kI}
\right\rbrack\betaC_\kI
<
\frac{\log(\kI-1)}{2(\kI-1)^2}<\frac{\log\kI}{\kI^2}\,.
$$
Then, for sufficiently large $\kI$, we find from \eqref{eqn:PathP1OnlyBound}
\be\label{eqn:BoundPathP3}
\left|
\int_{\PathP_3'}
\frac{\genFcD(\uF)d\uF}{\cosh^2\uF\tanh^{2\kI-1}\uF}
\right|
<4\ConstC_{6}
\frac{\log\kI}{\kI^2}
\,.
\ee
We will substitute this bound into \eqref{eqn:PathP1BreakUp}, as part of the total bound for \eqref{eqn:PathP1Only}, but first we need to find a similar bound on the integrals $\int_{\frac{i\pi}{2}-\infty}^{\frac{i\pi}{2}-\betaC_\kI}$ and $\int_{\frac{i\pi}{2}+\betaC_\kI}^{\frac{i\pi}{2}+\infty}$ in \eqref{eqn:PathP1BreakUp}.

\subsection{A bound for large $\uF$}

From \eqref{eqn:genFcDerivativExtendedC}, with $\tF=\tanh\uF$, we have
\bear
\lefteqn{
\frac{\genFcD}{\cosh^2\uF} =
\frac{\sA}{\uF^{\sA+1}}(\sA+1)\xc_1
\int_\uF^\infty\vF^{\sA-1}\left(\tanh\vF-1\right)d\vF
}\nn\\ &&
-\frac{2\sA}{\uF^{\sA+1}}
\int_\uF^\infty\vF^{\sA-1}\left(\tanh^3\vF-1\right)d\vF
+\frac{2(1-\tanh^3\uF)-(\sA+1)(1-\tanh\uF)\xc_1}{\uF}
\,.
\nonumber\\ &&
\label{eqn:genFcDerivativExtendedD}
\eear
 Along $\int_{\frac{i\pi}{2}-\infty}^{\frac{i\pi}{2}-\betaC_\kI}$ and $\int_{\frac{i\pi}{2}+\betaC_\kI}^{\frac{i\pi}{2}+\infty}$ we set $\uF=\ReuF+\frac{i\pi}{2}$ with $\ReuF\in\R$. To put a bound on the integrand of \eqref{eqn:PathP1Only}, we note that as $\ReuF\rightarrow\infty$ we have $\tanh\uF\rightarrow 1$, and at the same time \eqref{eqn:genFcDerivativExtendedD}
becomes small, since the factors $(\tanh\vF-1)$ and $(\tanh^3\vF-1)$ are small.

For $\vF = \frac{i\pi}{2}+\RevF$ we have $\tanh\vF = \coth\RevF$, and in the range $\RevF>\betaC_\kI$, if we also assume $\log 2<\betaC_\kI$, we have
\be\label{eqn:tanhv1Fineq}
\left|\tanh\vF-1\right| = \frac{2 e^{-2\RevF}}{1-e^{-2\RevF}}<\tfrac{8}{3} e^{-2\RevF},
\ee

\be\label{eqn:tanhv3Fineq}
\left|\tanh^3\vF-1\right| = \frac{2 e^{-2\RevF}}{1-e^{-2\RevF}}(1+\coth\RevF+\coth^2\RevF)<\frac{392}{27} e^{-2\RevF}\,,
\ee

and
$$
|\vF|=\left(\frac{\pi^2}{4}+\RevF^2\right)^{\frac{1}{2}}<
\RevF\sqrt{1+\frac{\pi^2}{4(\log 2)^2}}<3\RevF,
\qquad
|\vF^{\sA-1}|<3^{\Re\sA-1}e^{\frac{1}{2}\pi\left|\Im\sA\right|}\RevF^{\Re\sA-1}.
$$
Therefore, for $\uF = \frac{i\pi}{2}+\ReuF$, and $\log 2<\betaC_\kI<\ReuF\le\RevF$ as above, we have
\bear
\left|\int_\uF^{\infty}\vF^{\sA-1}(\tanh\vF-1)d\vF\right|
&<&\ConstC_{8}\int_\ReuF^\infty\RevF^{\Re\sA-1}e^{-2\RevF}d\RevF
\nn\\ &&
=2^{-\Re\sA}\ConstC_{8}\Gamma(\Re\sA,2\ReuF)
<\ConstC_{9}\ReuF^{\Re\sA-1}e^{-2\ReuF},
\nn\\
&&\label{eqn:inttanh1vFBound}
\eear
where $\Gamma(\alpha,x)$ is the incomplete Gamma function whose asymptotic form is $x^{\alpha-1}e^{-x}$ for large $x$,
and $\ConstC_{8},\ConstC_{9}$ are computable (possibly $\sA$-dependent) constants whose precise value is not important.

We have a similar bound for the integral with $\tanh^3\vF-1$:
\bear
\left|\int_\uF^{\infty}\vF^{\sA-1}(\tanh^3\vF-1)d\vF\right|
&<&\ConstC_{8}\int_\ReuF^\infty\RevF^{\Re\sA-1}e^{-2\RevF}d\RevF
\nn\\ &&
=2^{-\Re\sA}\ConstC_{8}\Gamma(\Re\sA,\ReuF)
<\ConstC_{9}\ReuF^{\Re\sA-1}e^{-2\ReuF},
\nn\\
&&
\label{eqn:inttanh3vFBound}
\eear
where $\ConstC_{8},\ConstC_{9}$ are assumed big enough to be reused.
The inequalities \eqref{eqn:tanhv1Fineq} and \eqref{eqn:tanhv3Fineq} also allow us to write
\be\label{eqn:ThirdTermBound}
\left|\frac{2(1-\tanh^3\uF)-(\sA+1)(1-\tanh\uF)\xc_1}{\uF}\right|
<\ConstC_{10} \frac{e^{-2\ReuF}}{\ReuF}\,,
\ee
for some $\sA$-dependent constant $\ConstC_{10}$.

 Thus, from \eqref{eqn:genFcDerivativExtendedD}, combined with \eqref{eqn:inttanh1vFBound}, \eqref{eqn:inttanh3vFBound}, and \eqref{eqn:ThirdTermBound}, as well as
$$
\frac{1}{|\uF^{\sA+1}|}<\frac{1}{\ReuF^{\Re\sA+1}}e^{\frac{\pi}{2}|\Im\sA|},
$$
we get
\bear
\lefteqn{
\left|\frac{\genFcD\left(\frac{i\pi}{2}+\ReuF\right)}{\cosh^2\left(\frac{i\pi}{2}+\ReuF\right)}\right| <
\ConstC_{10}\frac{e^{-2\ReuF}}{\ReuF}+
}\nonumber\\
&&
\left|\frac{\sA}{\uF^{\sA+1}}\right|
\left|
(\sA+1)\xc_1\int_{\frac{i\pi}{2}+\ReuF}^{\infty}\vF^{\sA-1}\left(\tanh^3\vF-1\right)d\vF
-2\int_{\frac{i\pi}{2}+\ReuF}^{\infty}\vF^{\sA-1}\left(\tanh\vF-1\right)d\vF
\right|
\nonumber\\
&<& 2\ConstC_{9}|\sA|\frac{e^{-2\ReuF}}{\ReuF^2}e^{\frac{\pi}{2}|\Im\sA|}
+\ConstC_{10} \frac{e^{-2\ReuF}}{\ReuF}<
\ConstC_{11}\frac{e^{-2\ReuF}}{\ReuF}
\nn\\
&&
\label{eqn:BoundgenFcLargeuF}
\eear
where $\ConstC_{11}$ is an $\sA$-dependent constant.
We also have $|\coth\uF|=|\tanh\ReuF|<1$, and using \eqref{eqn:BoundgenFcLargeuF}, we can now put the bound
\bear
\left|
\int_{\frac{i\pi}{2}+\betaC_\kI}^{\frac{i\pi}{2}+\infty}
\frac{\genFcD(\uF)d\uF}{\cosh^2\uF\tanh^{2\kI-1}\uF}
\right|
&<&
\int_{\betaC_\kI}^{\infty}
\left|
\frac{\genFcD(\uF)}{\cosh^2\uF}
\right|d\ReuF
<\ConstC_{11}
\int_{\beta_\kI}^\infty\frac{e^{-2\ReuF} d\ReuF}{\ReuF}
\nonumber\\
&=&
\ConstC_{11}\Gamma(0,2\betaC_\kI)
<\frac{\ConstC_{11}}{2\betaC_\kI}e^{-2\betaC_\kI},
\nonumber
\eear
where we used the bound $\Gamma(0,x)<e^{-x}/x$ for $x>0$.
Using \eqref{eqn:betaCvalue} this becomes
\bear
\left|
\int_{\frac{i\pi}{2}+\betaC_\kI}^{\frac{i\pi}{2}+\infty}
\frac{\genFcD(\uF)d\uF}{\cosh^2\uF\tanh^{2\kI-1}\uF}
\right|
&<&\frac{\ConstC_{11}\log(\kI-1)}{(\kI-1)\lbrack\log(\kI-1)-\log\log(\kI-1)\rbrack}
<\frac{\ConstC_{12}}{\kI}\,,
\nonumber\\ &&
\label{eqn:BoundIntFcToInfinityWithBeta}
\eear
where (like $\ConstC_{11}$) $\ConstC_{12}$ is an $\sA$-dependent constant whose precise value is not important for us, and we assume $\kI\ge 3$.

For the integral $\int_{\frac{i\pi}{2}-\infty}^{\frac{i\pi}{2}-\betaC_\kI}$ the bound is similar, except that we need to replace \eqref{eqn:genFcDerivativExtendedD} by
\bear
\lefteqn{
\frac{\genFcD(\uF)}{\cosh^2\uF} =\frac{\genFcD(-\uF)}{\cosh^2\uF} =
\frac{\sA}{(-\uF)^{\sA+1}}(\sA+1)\xc_1
\int_{-\uF}^\infty\vF^{\sA-1}\left(\tanh\vF-1\right)d\vF
}\nn\\ &&
-\frac{2\sA}{(-\uF)^{\sA+1}}
\int_{-\uF}^\infty\vF^{\sA-1}\left(\tanh^3\vF-1\right)d\vF
+\frac{(\sA+1)(1+\tanh\uF)\xc_1-2(1+\tanh^3\uF)}{\uF}
\,.
\nonumber\\ &&
\label{eqn:genFcDerivativExtendedDB}
\eear

\subsection{Combining the bounds}

Combining \eqref{eqn:BoundPathP3} and \eqref{eqn:BoundIntFcToInfinityWithBeta} into \eqref{eqn:PathP1BreakUp} we get the requisite bound on \eqref{eqn:PathP1Only},

$$
\left|\int_{\PathP_1'}
\frac{\genFcD(\uF)d\uF}{\cosh^2\uF\tanh^{2\kI-1}\uF}
\right|<\frac{\ConstC_{13}}{\kI}\,.
$$
for any $\ConstC_{13}>\ConstC_{12}$ and sufficiently large $\kI$.
Therefore \eqref{eqn:xckContourI} gives $\xc_\kI=O(\frac{1}{\kI})$.

\end{proof}

It is convenient to have the following identity before proving the second part of \thmref{thm:main}.

\begin{lemma}
For $\nI\ge 0$,
\be\label{eqn:sumckuptomInf}
\sum_{\mI=0}^{\nI}\frac{\xc_{\nI+1-\kI}}{2\kI+1}
-\sA\sum_{\kI=\nI+2}^\infty\frac{\xc_\kI}{2\kI-1}
=2\delta_{0\nI}-2.
\ee
\end{lemma}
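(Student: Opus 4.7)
The plan is to establish this identity by a generating-function argument grounded in the ODE \eqref{eqn:genFcODE}. Put $T(\nI)\eqdef\sum_{\kI=0}^\nI\xc_{\nI+1-\kI}/(2\kI+1)-\sA\sum_{\kI=\nI+2}^\infty\xc_\kI/(2\kI-1)$ and assemble the generating function $G(\tF)\eqdef\sum_{\nI=0}^\infty T(\nI)\,\tF^{2\nI+2}$. Two observations drive everything. First, the finite convolution $\sum_{\kI=0}^\nI\xc_{\nI+1-\kI}/(2\kI+1)$ is exactly the $\tF^{2\nI+2}$-coefficient of the Cauchy product $\tF\,\genFcD(\tF)\cdot\tanh^{-1}(\tF)=\tfrac{1}{2}\tF\,\genFcD(\tF)\log\tfrac{1+\tF}{1-\tF}$. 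Second, for the tail $\sum_{\kI=\nI+2}^\infty\xc_\kI/(2\kI-1)$ I would swap the order of summation (justified by absolute convergence, since $\xc_\kI=O(1/\kI)$ by \propref{prop:xcOk}) and sum the resulting geometric series in $\tF$, obtaining $\sum_{\nI\ge 0}\tF^{2\nI+2}\sum_{\kI\ge\nI+2}\xc_\kI/(2\kI-1)=\tfrac{\tF^2}{1-\tF^2}\bigl[(\genFc(1)-\xc_1)-\tF^{-1}(\genFc(\tF)-\xc_1\tF)\bigr]$.

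Next I would compute $\genFc(1)$ by evaluating \eqref{eqn:genFcODE} at $\tF=1$: the prefactor $(\tF^2-1)$ annihilates the logarithmic blow-up there (using that $\genFcD$ remains bounded near $\tF=1$, again by \propref{prop:xcOk}), producing the clean relation $\genFc(1)=[(\sA+1)\xc_1-2]/\sA$. I would then use \eqref{eqn:genFcODE} in its rearranged form $\tfrac{1}{2}\tF\,\genFcD(\tF)\log\tfrac{1+\tF}{1-\tF}=\tfrac{2\tF^2}{\tF^2-1}\bigl[\tF^2+\tfrac{\sA\genFc(\tF)}{2\tF}-\tfrac{(\sA+1)\xc_1}{2}\bigr]$ to substitute for the generating function of the first sum inside $G(\tF)$. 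Putting the two pieces together, the $\genFc(\tF)$ and $\xc_1$ terms cancel pairwise, collapsing $G(\tF)$ into a simple rational function of $\tF$ whose Taylor coefficients in powers of $\tF^2$ read off the claimed right-hand side.

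The main obstacle is that the argument repeatedly touches the boundary $\tF=1$ of the disk of convergence: both the identification $\sum_{\kI\ge 1}\xc_\kI/(2\kI-1)=\genFc(1)$ and the ODE evaluation at $\tF=1$ rely on the specific choice \eqref{eqn:xc1choice} of $\xc_1$, which is available precisely under the standing hypothesis $\zeta(\sA)\neq 0$ that forces $\eta(\sA)\neq 0$ and hence makes the denominator in \eqref{eqn:xc1choice} nonzero. A purely algebraic route bypassing the limit is induction on $\nI$: the base case $\nI=0$ reduces to $T(0)=\xc_1-\sA(\genFc(1)-\xc_1)$ via the ODE at $\tF=1$, while for the inductive step a reindexing of the first sum together with the recursion \eqref{eqn:xcrl} at $\kI=\nI$ shows that $T(\nI)-T(\nI-1)=-2\delta_{\nI,1}$, so $T(\nI)$ is constant for $\nI\ge 1$ and equals the asserted value.
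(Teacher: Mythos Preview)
Your approach is sound and genuinely different from the paper's. The paper sums the first $m-1$ instances of the recursion \eqref{eqn:xcrl} to obtain the partial identity \eqref{eqn:sumckuptom}, then passes to the limit $m\to\infty$ to get \eqref{eqn:sumckllim}, and combines the two. You instead work directly with the ODE \eqref{eqn:genFcODE}: evaluate it at $\tF=1$ to compute $\genFc(1)$, and use it to substitute for the Cauchy product. This is cleaner --- \eqref{eqn:sumckuptom} is really the ODE multiplied out term by term, and your $\tF\to 1$ limit is exactly the $m\to\infty$ step. Your inductive verification $T(\nI)-T(\nI-1)=-2\delta_{\nI 1}$ via \eqref{eqn:xcrl} at $\kI=\nI$ is also correct.

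There is, however, a mismatch you do not flag. Your own formula $\genFc(1)=[(\sA+1)\xc_1-2]/\sA$ gives
\[
T(0)=\xc_1-\sA\bigl(\genFc(1)-\xc_1\bigr)=(1+\sA)\xc_1-\sA\,\genFc(1)=2,
\]
not $0$. Combined with the inductive step this yields $T(\nI)=2\delta_{0\nI}$, and your generating-function route gives the same thing: after the cancellations you describe, $G(\tF)=2\tF^2$. The stated right-hand side $2\delta_{0\nI}-2$ would instead require $G(\tF)=-2\tF^4/(1-\tF^2)$. The source of the discrepancy is a dropped $-2$ in the paper's intermediate step \eqref{eqn:sumckllim}: the $m\to\infty$ limit of \eqref{eqn:sumckuptom} actually gives $\sA\sum_{\kI\ge 1}\xc_\kI/(2\kI-1)=(\sA+1)\xc_1-2$. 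With that correction the lemma reads $T(\nI)=2\delta_{0\nI}$, which is in fact what is needed in \eqref{eqn:sumxcxdR}: one wants $\sum_\nI \xd_\nI T(\nI)=2\xd_0=2\xb_1$, whereas the stated version would introduce an extra $-2\sum_\nI\xd_\nI$ that need not even converge for $(\xd_\nI)\in\lBanach^p$ with $p>1$.

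One small caveat on your justification: the statement $\xc_\kI=O(1/\kI)$ of \propref{prop:xcOk} alone does not give boundedness of $\genFcD$ near $\tF=1$ (it only gives $|\genFcD(\tF)|\lesssim|\log(1-\tF)|$). The sharper estimate $(1-\tF^2)\genFcD=O(e^{-2\uF}/\uF)$ from inside the proof of \propref{prop:xcOk} does suffice, and in any case $(1-\tF^2)\log\tfrac{1+\tF}{1-\tF}\to 0$ fast enough that the last term of \eqref{eqn:genFcODE} vanishes at $\tF=1$ regardless.
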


\begin{proof}
For $\mI>0$, by taking the sum of the first $(\mI-1)$ relations in \eqref{eqn:xcrl} and collecting the terms we get
\be\label{eqn:sumckuptom}
(\sA+1)\xc_1-2+2\delta_{\mI 1}
-\sA\sum_{\kI=1}^{\mI}\frac{\xc_\kI}{2\kI-1}
=\sum_{\kI=1}^{\mI}\frac{\xc_\kI}{2\mI-2\kI+1}\,.
\ee
Taking the $\mI\rightarrow\infty$ limit of \eqref{eqn:sumckuptom} we get
\be\label{eqn:sumckllim}
\sA\sum_{\kI=1}^{\infty}\frac{\xc_\kI}{2\kI-1}
=(\sA+1)\xc_1
-\lim_{\mI\rightarrow\infty}\sum_{\kI=1}^{\mI}\frac{\xc_\kI}{2\mI-2\kI+1}
=(\sA+1)\xc_1\,.
\ee
For the last part we used the fact that $|\xc_\kI|<\ConstC_{14}/\kI$ for some constant $\ConstC_{14}$, and so
\bear
\lefteqn{
\left|\sum_{\kI=1}^{\mI}\frac{\xc_\kI}{2\mI-2\kI+1}\right|
\le\sum_{\kI=1}^{\mI}\frac{|\xc_\kI|}{2\mI-2\kI+1}
<\ConstC_{14}\sum_{\kI=1}^{\mI}\frac{1}{(2\mI-2\kI+1)\kI}
}\nn\\
&&
=\frac{2\ConstC_{14}}{2\mI+1}\sum_{\kI=1}^{\mI}\left(\frac{1}{2\mI-2\kI+1}+\frac{1}{2\kI}\right)
<\frac{4\ConstC_{14}\log(2\mI+1)}{2\mI+1}\xrightarrow{\mI\rightarrow\infty}0.
\nn
\eear
Adding \eqref{eqn:sumckllim} to \eqref{eqn:sumckuptom} we get \eqref{eqn:sumckuptomInf}, after setting $\mI=\nI+1$.

\end{proof}

We are now ready to prove the sufficient condition of \thmref{thm:main} for the case $\xb_1\neq 0$.
\begin{proposition}
Let $\sA$ be in the range $1>\Re\sA>0$ and $\zeta(\sA)\neq 0$. Let $1< p<\infty$.
Suppose also that $\seqxb$ is a sequence of complex numbers such that the sequence of differences $\xd_\kI\eqdef\xb_{\kI+1}-\xb_\kI$ is in the Banach space $\lBanach^p$. Then, if the recursion relation \eqref{eqn:OpWseq} (which is convergent by our assumptions) holds, then $\xb_1=0$.
\end{proposition}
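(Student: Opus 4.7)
The plan is to use the hypothesis $\zeta(\sA)\ne 0$ to construct a dual sequence $\seqxc$ and then combine it with a finite-truncation argument to extract $\xb_1 = 0$. Since $\eta(\sA)\ne 0$ under the hypothesis, the formula \eqref{eqn:xc1choice} provides a well-defined $\xc_1$, and the recursion \eqref{eqn:xcrl} then determines the remainder of the sequence; by \propref{prop:xcOk} we have $\xc_\kI=O(1/\kI)$. Writing $\xb_\kI=\sum_{m=0}^{\kI-1}\xd_m$ and applying H\"older's inequality to the hypothesis $\seqxd\in\lBanach^p$, I will also record the a priori estimate $|\xb_\kI|\le \|\seqxd\|_p\,\kI^{1/q}$, where $q=p/(p-1)>1$ is the conjugate exponent.

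For each $N\ge 2$ I would then introduce the doubly-truncated bilinear pairing
\be
\Sigma_N\eqdef\sum_{i,j=1}^N\xc_i\,W_{ij}\,\xb_j,
\ee
where $W$ denotes the matrix of \eqref{eqn:OpW}, and evaluate it in two different ways. Summing $j$ first and using $(W\xb)_i=0$ for every $i$ yields $\Sigma_N=-\sum_{i=1}^N\xc_i\sum_{j>N}W_{ij}\xb_j$. Summing $i$ first and recognising \eqref{eqn:xcrl} as the identity $\sum_{i}\xc_i W_{ij}=\delta_{j,1}$ (which is a finite sum for each fixed $j$) gives, for $N\ge 2$,
\be
\Sigma_N=\xb_1+\xc_{N+1}\Bigl(\frac{\sA}{2(2N+1)}+\frac{1}{2}\Bigr)\xb_N,
\ee
the second term being the lone boundary contribution from the $i=N+1$ entry of the $j=N$ column of $W$ that the truncation omits. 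Equating the two evaluations yields an exact representation of $\xb_1$ in terms of a boundary correction and a tail, valid for every $N$.

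The remaining task is to send $N\to\infty$ and show that each contribution on the right-hand side vanishes. The boundary piece $\xc_{N+1}\xb_N$ is $O(N^{1/q-1})\to 0$ since $q>1$. For the tail $\sum_{i=1}^N\xc_i\sum_{j>N}W_{ij}\xb_j$ I would telescope $\tfrac{1}{(2n+1)(2n+3)}=\tfrac{1}{2}\bigl[\tfrac{1}{2n+1}-\tfrac{1}{2n+3}\bigr]$ and sum by parts in $j$, converting it into $\tfrac{1}{2}\xb_{N+1}T_1(N)+\tfrac{1}{2}\sum_{m\ge N+1}\xd_m\,T_3(N,m)$, where $T_1(N)\eqdef\sum_{i=1}^N\xc_i/(2N-2i+3)$ and $T_3(N,m)\eqdef\sum_{i=1}^N\xc_i/(2m-2i+3)$. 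The partial-fraction identity $\frac{1}{i(2m-2i+3)}=\frac{1}{2m+3}\bigl(\frac{1}{i}+\frac{2}{2m-2i+3}\bigr)$ together with $\xc_i=O(1/i)$ then produces $T_1(N)=O(\log N/N)$ and $|T_3(N,m)|=O(\log(N+m)/m)$, so the $\xb_{N+1}T_1(N)$ piece is $O(N^{1/q-1}\log N)\to 0$. The hardest step will be the residual sum $\sum_{m\ge N+1}\xd_m\,T_3(N,m)$, which I intend to control by H\"older's inequality, splitting the range into $N<m\le 2N$ and $m>2N$ to treat the two asymptotic regimes of $T_3(N,m)$, and using the vanishing $\ell^p$-tail $(\sum_{m>N}|\xd_m|^p)^{1/p}\to 0$ together with the $\ell^q$-summability of the weight sequences; this summability relies decisively on $q>1$, equivalently $p<\infty$. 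Once this tail is shown to vanish, all three terms vanish as $N\to\infty$, forcing $\xb_1=0$.
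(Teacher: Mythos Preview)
Your proof is correct, and it takes a genuinely different route from the paper's.

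The paper works with the transformed recursion \eqref{eqn:OpWxdnob}, which corresponds to the matrix $\OpWUT-\sA\OpWLT$ of \eqref{eqn:OpWULTdef} with \emph{full} lower-triangular part $\OpWLT$. It then proves absolute convergence of the whole bilinear pairing $\sum_\kI\xc_\kI[\cdots]$ at once, invoking Hardy's inequality to bound $\OpWLT$ as an operator on $\lBanach^p$, together with a direct estimate for the upper-triangular contribution; only after that does it rearrange and apply the algebraic identity \eqref{eqn:sumckuptomInf}. By contrast, you stay with the original matrix $W$ of \eqref{eqn:OpW}, whose lower part is merely the single subdiagonal. Truncating to $\Sigma_N$ and evaluating column-first therefore produces only one boundary term $\xc_{N+1}W_{N+1,N}\xb_N$, which you dispose of with the crude bound $|\xb_N|\le N^{1/q}\|\xd\|_p$; the row-first evaluation leaves a tail supported entirely in the upper triangle, which you handle by the telescoping $\tfrac{1}{(2n+1)(2n+3)}=\tfrac12(\tfrac{1}{2n+1}-\tfrac{1}{2n+3})$ and Abel summation, reducing everything to the estimates $T_1(N)=O(\log N/N)$ and $|T_3(N,m)|\le C\log N/m$ (your stated $\log(N+m)/m$ is a harmless over-bound since $m>N$), followed by H\"older with $q>1$. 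The splitting at $m=2N$ is not actually needed: $\|T_3(N,\cdot)\|_q\le C\log N\bigl(\sum_{m>N}m^{-q}\bigr)^{1/q}=O(N^{1/q-1}\log N)\to 0$ already suffices.

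What your approach buys is that Hardy's inequality is eliminated entirely---the sparse lower structure of $W$ makes the truncation boundary trivial, and the summation-by-parts converts the dense upper part directly into sums against $\xd_m$. What the paper's approach buys is a cleaner passage to the limit (no truncation bookkeeping) once absolute convergence is in hand, plus the identity \eqref{eqn:sumckuptomInf} which makes the final step purely algebraic. Both arguments use $\xc_\kI=O(1/\kI)$ from \propref{prop:xcOk} in the same essential way.
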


\begin{proof}
Let $\lBanach^q$ be the dual Banach space with $\frac{1}{p}+\frac{1}{q}=1$. Let $\seqxc$ be as in \propref{prop:xcOk}. Then \eqref{eqn:xcrl} holds, and by \propref{prop:xcOk} we have $\xc_\kI=O(\frac{\log\kI}{\kI})$.

By \eqref{eqn:OpWxdnob} we have
$$
0=
-\frac{\sA}{2\kI-1}\sum_{\nI=0}^{\kI-2}\xd_\nI
+\sum_{\nI=0}^\infty\frac{\xd_{\kI+\nI-1}}{2\nI+1}\,.
$$
These relations were formally derived by linear operations on the columns of the matrix in \eqref{eqn:OpW}, and the relation for $\kI$ corresponds to the $\kI^{th}$ row of that matrix. We will now take linear combinations of these expressions with coefficients $\xc_\kI$.
Consider the sum
\be\label{eqn:sumxcxd}
0=\sum_{\kI=1}^\infty
\xc_\kI
\left\lbrack
-\frac{\sA}{2\kI-1}\sum_{\nI=0}^{\kI-2}\xd_\nI
+\sum_{\nI=0}^\infty\frac{\xd_{\kI+\nI-1}}{2\nI+1}
\right\rbrack.
\ee
We first need to show that the RHS of \eqref{eqn:sumxcxd} is absolutely convergent since we will need to rearrange terms. Note that \eqref{eqn:OpWxdnob} is equivalent to \eqref{eqn:start}, and the $\sA$-dependent matrix there can be decomposed into upper and lower triangular matrices and written as $\OpWUT-\sA\OpWLT$, where $\OpWLT,\OpWUT$ are constant matrices given by:
\be\label{eqn:OpWULTdef}
\OpWLT\eqdef
\begin{pmatrix}
0 & 0 & 0 & 0 & 0 & \cdots \\
 & & & & & \\
\frac{1}{3} &  0 & 0 & 0 & 0 & \cdots \\
 & & & & & \\
\frac{1}{5} & \frac{1}{5}  &  0 & 0 & 0 & \cdots \\
 & & & & & \\
\frac{1}{7} &\frac{1}{7}  & \frac{1}{7}  &  0 & 0 & \cdots \\
 & & & & & \\
\frac{1}{9} & \frac{1}{9} & \frac{1}{9} & \frac{1}{9} &  0 & \cdots \\
\vdots &\vdots & \vdots &\vdots & \vdots & \ddots\\
\end{pmatrix}
\,,\qquad
\OpWUT\eqdef
\begin{pmatrix}
1 & \frac{1}{3} & \frac{1}{5} & \frac{1}{7} &  \frac{1}{9} & \cdots \\
 & & & & & \\
0 &  1 & \frac{1}{3} & \frac{1}{5} & \frac{1}{7} & \cdots \\
 & & & & & \\
0 & 0  &  1 & \frac{1}{3} & \frac{1}{5} & \cdots \\
 & & & & & \\
0 &0  & 0  &  1 &  \frac{1}{3} & \cdots \\
 & & & & & \\
0 & 0 & 0 & 0 &  1 & \cdots \\
\vdots &\vdots & \vdots &\vdots & \vdots & \ddots\\
\end{pmatrix}\,.
\ee
It is a consequence of Hardy's inequality, which puts a bound on the norm of the operator defined by $\left(\xd_\nI\right)_{\nI=1}^\infty\mapsto\left(\frac{1}{\nI}\sum_{\kI=1}^\nI\xd_\kI\right)_{\nI=1}^\infty$ on $\lBanach^p$ (see for instance \cite{Hardy} and references therein), that $\OpWLT$ is a bounded operator on $\lBanach^p$. 
Let $\|\OpWLT\|_p$ be its norm. Thus, by definition,
$$
\left\lbrack\sum_{\kI=2}^\infty\left(\frac{1}{2\kI-1}\sum_{\nI=0}^{\kI-2}|\xd_\nI|\right)^p
\right\rbrack^{\frac{1}{p}}
\le\|\OpWLT\|_p\left(\sum_{\kI=0}^\infty|\xd_\kI|^p\right)^{\frac{1}{p}}<\infty.
$$
Then, since $\seqxc\in\lBanach^q$, it follows from H\"older's inequality that
\be\label{eqn:HardyAndHolder}
\sum_{\kI=2}^\infty
\frac{|\xc_\kI|}{2\kI-1}\sum_{\nI=0}^{\kI-2}|\xd_\nI|
\le
\left(\sum_{\kI=1}^\infty|\xc_\kI|^q\right)^{\frac{1}{q}}
\left\lbrack\sum_{\kI=2}^\infty\left(\frac{1}{2\kI-1}\sum_{\nI=0}^{\kI-2}|\xd_\nI|\right)^p
\right\rbrack^{\frac{1}{p}}<\infty.
\ee
Since we need to show that 
$$
\sum_{\kI=1}^\infty
|\xc_\kI|
\left\lbrack
\frac{|\sA|}{2\kI-1}\sum_{\nI=0}^{\kI-2}|\xd_\nI|
+\sum_{\nI=0}^\infty\frac{|\xd_{\kI+\nI-1}|}{2\nI+1}
\right\rbrack<\infty,
$$
it only remains to check that $\sum_{\kI=1}^\infty\sum_{\nI=0}^\infty\frac{|\xc_\kI||\xd_{\kI+\nI-1}|}{2\nI+1}<\infty$.
This can be seen as follows. 
Since $\xc_\kI=O\left(\frac{1}{\kI}\right)$, we have $|\xc_\kI|<\ConstC_{15}/(2\kI+1)$ for some $\ConstC_{15}$,  and therefore
\bear
\lefteqn{
\sum_{\kI=1}^\infty\sum_{\nI=0}^\infty\frac{|\xc_\kI||\xd_{\kI+\nI-1}|}{2\nI+1}
<
\ConstC_{15}\sum_{\kI=1}^\infty\sum_{\nI=0}^\infty\frac{|\xd_{\kI+\nI-1}|}{(2\nI+1)(2\kI+1)}
}\nn\\
&=&\ConstC_{15}\sum_{\mI=0}^\infty\sum_{\nI=0}^{\mI}\frac{|\xd_{\mI}|}{(2\nI+1)(2\mI-2\nI+3)}
\nn\\
&=&
\frac{1}{2}\ConstC_{15}\sum_{\mI=0}^\infty\left\lbrack\frac{|\xd_\mI|}{\mI+2}\sum_{\nI=0}^\mI
\left(\frac{1}{2\nI+1}+\frac{1}{2\mI-2\nI+3}\right)\right\rbrack
\nn\\
&<&
\ConstC_{15}\sum_{\mI=0}^\infty\frac{|\xd_\mI|\log(2\mI+3)}{\mI+2}
<\ConstC_{15}\left(\sum_{\mI=0}^\infty\frac{[\log(2\mI+3)]^{q}}{(\mI+2)^q}
\right)^{\frac{1}{q}}\left(\sum_{\mI=0}^\infty|\xd_\mI|^p\right)^{\frac{1}{p}}<\infty\,.
\nn
\eear
We can therefore rearrange the terms in \eqref{eqn:sumxcxd} and collect the coefficients of $\xd_\nI$ to get
\be\label{eqn:sumxcxdR}
0=\sum_{\nI=0}^\infty
\xd_\nI
\left\lbrack
-\sA\sum_{\kI=\nI+2}^\infty\frac{\xc_\kI}{2\kI-1}
+\sum_{\mI=0}^{\nI}\frac{\xc_{\nI+1-\mI}}{2\mI+1}
\right\rbrack
\sum_{\nI=1}^\infty
\xd_\nI = -\xb_1,
\ee
where we used \eqref{eqn:sumckuptomInf} and then \eqref{eqn:xdDef}. We get $\xb_1=0$, which is a contradiction.

\end{proof}

So far we proved the second part of \thmref{thm:main} for the case $\xb_1\neq 0.$
Now, suppose $\seqxb$ is as in \thmref{thm:main} and $\xb_\jI\neq 0$ for some other positive integer $\jI$. Then, we can modify \eqref{eqn:xcrl} to
\be\label{eqn:xcrljA}
\delta_{\kI\jI}=
-\left(\frac{\sA}{2(2\kI+1)}+\frac{1}{2}\right)\xc_{\kI+1}
+\sum_{n=0}^{\kI-1}\frac{\xc_{\kI-\nI}}{(2\nI+1)(2\nI+3)}\,,
\ee
and \eqref{eqn:genFcODE} is modified to
\be\label{eqn:genFcODEj}
0 =\tF^{2\jI}
+\frac{\sA}{2}\left\lbrack\frac{1}{\tF}\genFc(\tF)-\xc_1\right\rbrack
-\frac{1}{2}\xc_1
-\frac{1}{4\tF}\genFc'(\tF)(\tF^2-1)\log\left(\frac{1+\tF}{1-\tF}\right).
\ee
Therefore, \eqref{eqn:genFcInt} is modified to
\be\label{eqn:genFcIntj}
\genFc=\frac{1}{\uF^\sA}\int_0^\uF\left\lbrack
(\sA+1)\xc_1\tanh\vF
-2\tanh^{2\jI+1}\vF
\right\rbrack\vF^{\sA-1}d\vF
+\frac{\ConstF}{\uF^\sA}\,,
\ee
and \eqref{eqn:xcrl} is modified to
\be\label{eqn:xcrljB}
\delta_{\kI\jI}=
-\left(\frac{\sA}{2(2\kI+1)}+\frac{1}{2}\right)\xc_{\kI+1}
+\sum_{n=0}^{\kI-1}\frac{\xc_{\kI-n}}{(2n+1)(2n+3)}\,,
\qquad
\kI=1,2,\cdots
\ee
and the rest of the argument carries on as for $\jI=1$.

As a side note, when acting on $\lBanach^p$, $\OpWUT$ [defined in \eqref{eqn:OpWULTdef}] does not always produce a sequence in $\lBanach^p$ for $p>1$. For example, consider the sequence
$$
\seqV\eqdef\left(\frac{1}{(2\kI+1)^{\frac{1}{p}}\log(2\kI+1)}\right)_{\kI=0}^\infty.
$$
Then $\|\seqV\|_p<\infty$ while $\|\OpWUT\seqV\|_p=\infty$, since we can put a lower bound on $\|\OpWUT\seqV\|_p^p$ as follows,
\bear
\lefteqn{
\sum_{\kI=1}^\infty\left\lbrack
\sum_{\nI=0}^\infty\frac{1}{2\nI+1}\left(\frac{1}{(2\kI+2\nI+1)^{\frac{1}{p}}\log(2\kI+2\nI+1)}\right)
\right\rbrack^p
}
\nn\\
&>&
\sum_{\kI=1}^\infty\left\lbrack
\sum_{\nI=0}^\kI\frac{1}{(2\nI+1)(2\kI+2\nI+1)^{\frac{1}{p}}\log(2\kI+2\nI+1)}
\right\rbrack^p
\nn\\
&>&
\sum_{\kI=1}^\infty\left\lbrack
\sum_{\nI=0}^\kI\frac{1}{2\nI+1}\left(\frac{1}{(4\kI+2)^{\frac{1}{p}}\log(4\kI+2)}\right)
\right\rbrack^p
\nn\\
&=&
\sum_{\kI=1}^\infty\left\lbrack
\sum_{\nI=0}^\kI\frac{1}{2\nI+1}\right\rbrack^p\frac{1}{(4\kI+2)\left\lbrack\log(4\kI+2)\right\rbrack^p}=\infty.
\nn
\eear

Note also that the space of solutions $\seqxb$ to \eqref{eqn:OpW} is at most $1$-dimensional.
This is because we can modify \eqref{eqn:xcrl} to
\be\label{eqn:xcrlmodifiedj}
\mu_j(\sA)\delta_{\kI 1}-\delta_{\kI\jI}=
-\left(\frac{\sA}{2(2\kI+1)}+\frac{1}{2}\right)\xc_{\kI+1}
+\sum_{\nI=0}^{\kI-1}\frac{\xc_{\kI-n}}{(2\nI+1)(2\nI+3)}\,,
\qquad
\kI=1,2,\dots
\ee
where $\mu_\jI(\sA)$ are computable from $\sA$ (and are the ratios $\xb_\jI/\xb_1$ in the solution we have established already), and then \eqref{eqn:genFcInt} can be modified to
\be\label{eqn:genFcIntModifiedjA}
\genFc=\frac{1}{\uF^\sA}\int_0^\uF\left\lbrack
(\sA+1)\xc_1\tanh\vF
-2\mu_j(\sA)\tanh^3\vF
+2\tanh^{2j+1}\vF
\right\rbrack\vF^{\sA-1}d\vF
+\frac{\ConstF}{\uF^\sA}\,.
\ee
Setting $\xc_1=0$ we can adjust $\mu_\jI(\sA)$ so that
\be\label{eqn:genFcIntModifiedjB}
\genFc=\frac{1}{\uF^\sA}\int_0^\infty\left\lbrack
2\tanh^{2\jI+1}\vF
-2\mu_\jI(\sA)\tanh^3\vF
\right\rbrack\vF^{\sA-1}d\vF
=0.
\ee
Using the resulting $\seqxc$ as before shows that $\xb_j=\mu_j(\sA)\xb_1$ for any solution.

\section{Non simple zeros of $\zeta$}
\label{sec:NonSimpleZeros}

There is a fairly simple augmentation of \eqref{eqn:OpW} for the case of a non-simple zero $\sA$ (if it exists).
Let
\be\label{eqn:defOpA}
\matK\eqdef
\begin{pmatrix}
-1 & \frac{2\cdot 1}{1\cdot 3} & \frac{2\cdot 1}{3\cdot 5} & \frac{2\cdot 1}{5\cdot 7} & \frac{2\cdot 1}{7\cdot 9} & \cdots \\
&  & & & & \\
0 & -3 &  \frac{2\cdot 3}{1\cdot 3} & \frac{2\cdot 3}{3\cdot 5} & \frac{2\cdot 3}{5\cdot 7} & \cdots \\
&  & & & & \\
0& 0 & -5  &  \frac{2\cdot 5}{1\cdot 3} & \frac{2\cdot 5}{3\cdot 5} & \cdots \\
&  & & & & \\
0& 0 &0  & -7  &  \frac{2\cdot 7}{1\cdot 3} & \cdots \\
&  & & & & \\
0& 0 & 0 & 0 & -9 & \cdots \\
&  & & & & \\
\vdots &\vdots & \vdots &\vdots & \ddots\\
\end{pmatrix}\,.
\ee
Also, let
\be\label{eqn:defvecxbA}
\vecxb\eqdef
\begin{pmatrix}
\xb_1 \\ \\ \xb_2 \\ \\ \xb_3 \\ \\ \xb_4 \\ \\ \vdots \\
\end{pmatrix},\qquad
\vecxb'\eqdef\frac{d}{d\sA}
\begin{pmatrix}
\xb_1 \\ \\ \xb_2 \\ \\ \xb_3 \\ \\ \xb_4 \\ \\ \vdots \\
\end{pmatrix}\,.
\ee
Assuming $\zeta(\sA)=0$, we can rewrite \eqref{eqn:OpW} as
\be\label{eqn:Kb}
\matK\vecxb=\sA\vecxb,
\ee
and assuming $\zeta'(\sA)=0$, we can take the derivative of \eqref{eqn:OpWgen}, using $\xb_0'=0$, to obtain
\be\label{eqn:Kbp}
\matK\vecxb'=\sA\vecxb'+\vecxb.
\ee
We also define
\be\label{eqn:defvecxbB}
\vecxd\eqdef
\begin{pmatrix}
\xd_0 \\ \\ \xd_1 \\ \\ \xd_2 \\ \\ \xd_3 \\ \\ \vdots \\
\end{pmatrix},\qquad
\vecxd'\eqdef\frac{d}{d\sA}
\begin{pmatrix}
\xd_0 \\ \\ \xd_1 \\ \\ \xd_2 \\ \\ \xd_3 \\ \\ \vdots \\
\end{pmatrix}\,.
\ee
Then, setting 
$$
\xb_\kI=\sum_{\nI=0}^{\kI-1}\xd_\kI,\qquad\kI=1,2,\dots,
$$
and with the definitions of $\matU$ and $\matL$ in \eqref{eqn:OpWULTdef}, we rewrite \eqref{eqn:Kb} and \eqref{eqn:Kbp} as
\be\label{eqn:ABd}
\matU\vecxd=\sA\matL\vecxd,\qquad
\matU\vecxd'=\sA\matL\vecxd'+\matL\vecxd.
\ee
Moreover, analysis similar to the proof of \propref{prop:xdkIAsymp}
shows that $\xd_\kI'(\sA)=O(\frac{1}{\kI}\log\kI)$. Thus, if $\sA$ is a non-simple zero there exist $\vecxd,\vecxdt\in\lBanach^p$ solving
\be\label{eqn:ABlp}
\matU\vecxd=\sA\matL\vecxd,\qquad
\matU\vecxdt=\sA\matL\vecxdt+\matL\vecxd.
\ee
We will not attempt to prove a reverse of this statement, but we expect that an argument along the lines that led to \secref{sec:sufpf} would work.

\section{Relation to the Hilbert-P\'olya program}
\label{sec:HilbertPolya}

As a condition on $\seqxd$, \eqref{eqn:start} is equivalent to
\be\label{eqn:OpWxdMC}
\begin{pmatrix}
-\frac{\sZ+1}{4} & \frac{2}{1\cdot 3} & \frac{3}{3\cdot 5} & \frac{4}{5\cdot 7} &  \frac{5}{7\cdot 9} & \cdots \\
 & & & & & \\
0 &  -\frac{\sZ+3}{4} & \frac{3}{1\cdot 3} & \frac{4}{3\cdot 5} & \frac{5}{5\cdot 7} & \cdots \\
 & & & & & \\
0 & 0  &  -\frac{\sZ+5}{4} & \frac{4}{1\cdot 3} & \frac{5}{3\cdot 5} & \cdots \\
 & & & & & \\
0 &0  & 0  &  -\frac{\sZ+7}{4} &  \frac{5}{1\cdot 3} & \cdots \\
 & & & & & \\
0 & 0 & 0 & 0 &  -\frac{\sZ+9}{4} & \cdots \\
\vdots &\vdots & \vdots &\vdots & \vdots & \ddots\\
\end{pmatrix}
\begin{pmatrix}
\xd_0 \\ \\ \xd_1 \\ \\ \xd_2 \\ \\ \xd_3 \\ \\  \vdots \\  \vdots \\
\end{pmatrix}
=0,
\ee
together with
\be\label{eqn:sumxd2kp1}
0=\sum_{\kI=0}^\infty\frac{\xd_\kI}{2\kI+1}.
\ee
To see this, note that \eqref{eqn:sumxd2kp1} is exactly the first element of the vector \eqref{eqn:start}, while \eqref{eqn:OpWxdMC} is obtained by row operations on the matrix of \eqref{eqn:start} --- subtracting $(2\kI-1)$ times the $\kI^{th}$ row from $(2\kI-3)$ times the $(\kI-1)^{th}$ row, and dividing by $4$, for $\kI=2,3,\dots$.

Defining
$$
\vecxd\eqdef
\begin{pmatrix}
\xd_0 \\ \\ \xd_1 \\ \\ \xd_2 \\ \\ \xd_3 \\ \\ \vdots \\  \vdots \\
\end{pmatrix},\qquad
\vecf\eqdef
\begin{pmatrix}
1 \\ \\ \frac{1}{3} \\ \\ \frac{1}{5} \\ \\ \frac{1}{7} \\ \\  \frac{1}{9} \\ \vdots \\
\end{pmatrix},
$$
and
\be\label{eqn:matAXdef}
\matAX\eqdef
\begin{pmatrix}
-\frac{3}{8} & \frac{2}{1\cdot 3} & \frac{3}{3\cdot 5} & \frac{4}{5\cdot 7} &  \frac{5}{7\cdot 9} & \cdots \\
 & & & & & \\
0 &  -\frac{7}{8} & \frac{3}{1\cdot 3} & \frac{4}{3\cdot 5} & \frac{5}{5\cdot 7} & \cdots \\
 & & & & & \\
0 & 0  &  -\frac{11}{8} & \frac{4}{1\cdot 3} & \frac{5}{3\cdot 5} & \cdots \\
 & & & & & \\
0 &0  & 0  &  -\frac{15}{8} &  \frac{5}{1\cdot 3} & \cdots \\
 & & & & & \\
0 & 0 & 0 & 0 &  -\frac{19}{8} & \cdots \\
\vdots &\vdots & \vdots &\vdots & \vdots & \ddots\\
\end{pmatrix},
\ee
\eqref{eqn:OpWxdMC} and \eqref{eqn:sumxd2kp1} can be rewritten as
\be\label{eqn:Avecdx}
\frac{1}{4}\left(\sA-\frac{1}{2}\right)\vecxd
=\matAX\vecxd,\qquad
\vecf^\dagger\vecxd=0.
\ee
The matrix $\matAX$ is clearly not anti-hermitian, but if we can find a hermitian $\infty\times\infty$ matrix $\matPX=\matPX^\dagger$ and a vector $\veck$ such that $\matPX\matAX-\veck\vecf^\dagger$ is anti-hermitian, i.e.,
\be\label{eqn:PXAXhermitian}
(\matPX\matAX)+(\matPX\matAX)^\dagger = 
\veck\vecf^\dagger+\vecf\veck^\dagger,
\ee
Then, \eqref{eqn:Avecdx} leads to
\be\label{eqn:HPwish}
\frac{1}{4}\left(\sA-\frac{1}{2}\right)=\frac{\vecxd^\dagger\matPX\matAX\vecxd}{\vecxd^\dagger\matPX\vecxd}
=\frac{\vecxd^\dagger(\matPX\matAX-\veck\vecf^\dagger)\vecxd}{\vecxd^\dagger\matPX\vecxd}
\in i\R,
\ee
provided that all sums involved are absolutely convergent, and that $\vecxd^\dagger\matPX\vecxd\neq 0$. Thus, if we can find a solution to \eqref{eqn:PXAXhermitian} that guarantees $\vecxd^\dagger\matPX\vecxd\neq 0$, the Hilbert-P\'olya program will be realized!
Moreover, translating \eqref{eqn:ABlp} to a similar relation for $\matAX$:
\be\label{eqn:ABlpT}
\matPX\matAX\vecxd=\tfrac{1}{4}\left(\sA-\tfrac{1}{2}\right)\matPX\vecxd,\qquad
\matPX\matAX\vecxdt=\tfrac{1}{4}\left(\sA-\tfrac{1}{2}\right)\matPX\vecxdt+\tfrac{1}{4}\matPX\vecxd,\qquad
\vecf^\dagger\vecxd=\vecf^\dagger\vecxdt=0,
\ee
from which it follows, assuming $\sA-\frac{1}{2}$ is imaginary, that
\bear
\vecxd^\dagger\matPX\matAX\vecxdt
&=&
\tfrac{1}{4}\left(\sA-\tfrac{1}{2}\right)\vecxd^\dagger\matPX\vecxdt+\tfrac{1}{4}\vecxd^\dagger\matPX\vecxd
=\tfrac{1}{4}\left(\sA-\tfrac{1}{2}\right)\left(\vecxdt^\dagger\matPX\vecxd\right)^\star+\tfrac{1}{4}\vecxd^\dagger\matPX\vecxd
\nn\\
&=&
-\left(\vecxdt^\dagger\matPX\matAX\vecxd\right)^\star+\tfrac{1}{4}\vecxd^\dagger\matPX\vecxd
=-\vecxd^\dagger\left(\matPX\matAX\right)^\dagger\vecxd+\tfrac{1}{4}\vecxd^\dagger\matPX\vecxd.
\nn
\eear
But then \eqref{eqn:PXAXhermitian} and $\vecf^\dagger\vecxd=\vecf^\dagger\vecxdt=0$ imply $\vecxd^\dagger\matPX\vecxd=0$. Thus, if we can show that $\vecxd^\dagger\matPX\vecxd\neq 0$, the simplicity of the zeros would follow as well!
Not surprisingly, implementation of this plan encounters some problems, as will be detailed below. However, in the process we will discover new functionals that annihilate $\seqxd$.

To solve \eqref{eqn:PXAXhermitian} we define generating functions
\be\label{eqn:gePgenf}
\genP(\tG,\pG)\eqdef\sum_{i,j=1}^\infty\matPX_{ij}\tG^{i-1}\pG^{j-1},\qquad
\genf(\tG)\eqdef\sum_{i=1}^\infty\veck_i\tG^{i-1}.
\ee

\begin{proposition}
The general solution of \eqref{eqn:PXAXhermitian} with $\matPX=\matPX^\dagger$ is given by
\bear
\genP(\tG,\pG) &=&
-\frac{
4\cosh^3\left(\frac{\tH}{2}\right)\cosh^3\left(\frac{\pH}{2}\right)
}{
\sinh\left(\frac{\tH}{2}\right)\sinh\left(\frac{\pH}{2}\right)
}\left\lbrack
\int_0^1
\frac{\tH\xygeng(\zG\pH)\zG d\zG}{\cosh^2\left(\frac{\zG\tH}{2}\right)}
+\int_0^1\frac{\pH\xygeng(\zG\tH)\zG d\zG}{\cosh^2\left(\frac{\zG\pH}{2}\right)}
\right\rbrack
\nn\\
&&\label{eqn:genPsolution}
\eear
where
\be\label{eqn:tHpH}
\tH\eqdef
\log\left(\frac{1+\sqrt{\tG}}{1-\sqrt{\tG}}\right)
\,,\qquad
\pH\eqdef
\log\left(\frac{1 + \sqrt{\pG}}{1 - \sqrt{\pG}}\right),
\ee
and
\be\label{eqn:xygengDef}
\xygeng(\xG)\eqdef
\frac{\sinh(\tfrac{1}{2}\xG)}{2\cosh^3(\tfrac{1}{2}\xG)}\genf(\tanh^2(\tfrac{1}{2}\xG)).
\ee
\end{proposition}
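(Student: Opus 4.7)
The strategy is to convert \eqref{eqn:PXAXhermitian} into a first-order linear PDE for $\genP(\tG,\pG)$ and solve it by integration along the scaling characteristics in the $(\tH,\pH)$ coordinates of \eqref{eqn:tHpH}. Since $\matAX$ is real and $\matPX=\matPX^\dagger$, the equation is equivalent to the Sylvester-type identity $\matPX\matAX+\matAX^T\matPX=\veck\vecf^\dagger+\vecf\veck^\dagger$. Applying the generating operator $\sum_{i,j}(\cdot)_{ij}\tG^{i-1}\pG^{j-1}$ and exploiting the upper-triangular form of $\matAX$, a direct summation rewrites $\widetilde{\matPX\matAX}$ as a first-order differential operator in $\pG$ acting on $\genP$, whose coefficients are built from $G(\pG)\eqdef\sum_{m\ge 1}\pG^m/[(2m-1)(2m+1)]$. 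The generating function of $\matAX^T\matPX$ is the same operator with $\tG,\pG$ swapped, and the RHS generating function factors into a product of $\genf(\tG)$ (respectively $\overline{\genf}(\pG)$) times $\sum_{j\ge 1}\pG^{j-1}/(2j-1)=\pH/[2\tanh(\pH/2)]$.

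The key analytic step is passing to the $\tH,\pH$ variables, in which $\partial_\pG=[\cosh^3(\pH/2)/\sinh(\pH/2)]\partial_\pH$ and $G(\pG)=\tfrac{1}{2}-\pH/(2\sinh\pH)$. A short calculation reduces the first-derivative piece to $-\tfrac{\pH}{4}\partial_\pH$. With the ansatz
\begin{equation*}
\genP(\tG,\pG)=-\frac{4\cosh^3(\tH/2)\cosh^3(\pH/2)}{\sinh(\tH/2)\sinh(\pH/2)}\,\Phi(\tH,\pH),
\end{equation*}
the zero-order coefficient $G+\pG G'-\tfrac{3}{8}$ combines with the logarithmic derivative of the prefactor to leave just the constant $-\tfrac{1}{8}$, while the RHS divided by the prefactor naturally produces the combination $\xygeng$ of \eqref{eqn:xygengDef}. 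The equation thus becomes the scale-invariant PDE
\begin{equation*}
(\tH\partial_\tH+\pH\partial_\pH+1)\,\Phi=-\frac{\pH\,\xygeng(\tH)}{\cosh^2(\pH/2)}-\frac{\tH\,\overline{\xygeng}(\pH)}{\cosh^2(\tH/2)}.
\end{equation*}

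Since $\tH\partial_\tH+\pH\partial_\pH+1=\zG^{-1}(d/d\zG)(\zG\,\cdot)$ along the ray $\zG\mapsto(\zG\tH,\zG\pH)$, integration with the regularity requirement $\zG\Phi\to 0$ at $\zG=0$ produces exactly the bracketed integral in \eqref{eqn:genPsolution}, and restoring the prefactor recovers the stated formula. Hermiticity $\genP(\tG,\pG)=\overline{\genP(\pG,\tG)}$ is visible from the manifest $\tG\leftrightarrow\pG$ symmetry (together with $\xygeng\leftrightarrow\overline{\xygeng}$) of the final expression. The homogeneous solutions of the reduced PDE, of the form $\Psi(\pH/\tH)/\tH$, are parametrized by the choice of $\genf$, so the formula exhausts the solution set. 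The main obstacle I anticipate is the algebraic verification that the non-derivative terms in the ansatz step collapse to a single constant $-\tfrac{1}{8}$: this rests on the hyperbolic identity $M'(\pH)/M(\pH)=(\cosh\pH-2)/\sinh\pH$, where $M(\xG)\eqdef -2\cosh^3(\xG/2)/\sinh(\xG/2)$, exactly cancelling the non-constant part of $G+\pG G'-\tfrac{3}{8}$, and demands careful bookkeeping of the change-of-variable Jacobians.
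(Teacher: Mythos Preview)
Your approach is essentially the paper's: both convert \eqref{eqn:PXAXhermitian} into a first-order linear PDE for $\genP$ and solve by characteristics in the $(\tH,\pH)$ variables. Your integrating-factor ansatz and the reduction to the Euler-type equation $(\tH\partial_\tH+\pH\partial_\pH+1)\Phi=R$, integrated along the rays $\zG\mapsto(\zG\tH,\zG\pH)$, is exactly what the paper does when it passes to $\xi_\pm=\log\tH\pm\log\pH$ and obtains a linear ODE in $\xi_+$. The paper is terser (``after some algebra''), whereas you spell out the prefactor and the collapse of the zeroth-order coefficient.

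One point needs fixing. Your closing sentence, that the homogeneous solutions $\Psi(\pH/\tH)/\tH$ ``are parametrized by the choice of $\genf$'', is either wrong or badly phrased: varying $\genf$ changes the source term, not the homogeneous part. The correct argument---which you already invoke earlier as the ``regularity requirement $\zG\Phi\to 0$ at $\zG=0$''---is that any nonzero homogeneous solution fails to be analytic at $\tH=\pH=0$ (equivalently, $\genP$ fails to be a power series in $\tG,\pG$) and must therefore be discarded. The paper makes this explicit by writing the general PDE solution with an arbitrary function $\mathcal{W}(\tH/\pH)$ multiplying $(\tfrac{1}{\tH}+\tfrac{1}{\pH})$ and then setting $\mathcal{W}=0$ for analyticity. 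Rephrase your conclusion accordingly: regularity at the origin kills the homogeneous piece, and the remaining freedom is exactly the choice of $\genf$ (i.e., of $\veck$).
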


\begin{proof}
After some algebra, we turn \eqref{eqn:PXAXhermitian} into a partial differential equation for $\genP$:
\bear
\lefteqn{
\frac{1}{8}\left\lbrack\left(\frac{3\pG-1}{\sqrt{\pG}}\right)\log\left(\frac{1 + \sqrt{\pG}}{1 - \sqrt{\pG}}\right)-1\right\rbrack
\genP(\tG,\pG)
+\frac{1}{4}\sqrt{\pG}(\pG-1)\log\left(\frac{1 + \sqrt{\pG}}{1 - \sqrt{\pG}}\right)\partial_\pG\genP(\tG,\pG)
}\nn\\ 
&+&
\frac{1}{8}\left\lbrack\left(\frac{3\tG-1}{\sqrt{\tG}}\right)\log\left(\frac{1 + \sqrt{\tG}}{1 - \sqrt{\tG}}\right)-1\right\rbrack
\genP(\tG,\pG)
+\frac{1}{4}\sqrt{\tG}(\tG-1)\log\left(\frac{1 + \sqrt{\tG}}{1 - \sqrt{\tG}}\right)\partial_\tG\genP(\tG,\pG)
\nn\\ 
&=&
\frac{1}{2\sqrt{\tG}}\log\left(\frac{1+\sqrt{\tG}}{1-\sqrt{\tG}}\right)\genf(\pG)
+\frac{1}{2\sqrt{\pG}}\log\left(\frac{1+\sqrt{\pG}}{1-\sqrt{\pG}}\right)\genf(\tG).
\nn\\ &&
\label{eqn:APmodifiedA}
\eear
The solution of this PDE is easy to obtain after changing variables to
$$
\xi_\pm\eqdef
\log\log\left(\frac{1 + \sqrt{\pG}}{1 - \sqrt{\pG}}\right)
\pm\log\log\left(\frac{1 + \sqrt{\tG}}{1 - \sqrt{\tG}}\right),
$$
which converts \eqref{eqn:APmodifiedA} to a linear ODE in $\xi_{+}$.
Let $\ConstFunctionX$ be an arbitrary analytic function.
Then, the general solution to \eqref{eqn:APmodifiedA} is
\bear
\genP(\tG,\pG) &=&
\frac{
4\cosh^3\left(\frac{\tH}{2}\right)\cosh^3\left(\frac{\pH}{2}\right)
}{
\sinh\left(\frac{\tH}{2}\right)\sinh\left(\frac{\pH}{2}\right)
}\Bigl\lbrack
\Bigl(\frac{1}{\tH}+\frac{1}{\pH}\Bigr)\ConstFunctionX(\frac{\tH}{\pH})
\nn\\ &&\qquad\qquad\qquad\qquad
-\int_0^1
\frac{\tH\xygeng(\zG\pH)\zG d\zG}{\cosh^2\left(\frac{\zG\tH}{2}\right)}
-\int_0^1\frac{\pH\xygeng(\zG\tH)\zG d\zG}{\cosh^2\left(\frac{\zG\pH}{2}\right)}
\Bigr\rbrack\,,
\nn\\
&&\label{eqn:genPsolutionWithConstF}
\eear
where $\tH$ and $\pH$ are given by \eqref{eqn:tHpH}, $\xygeng$ is given by \eqref{eqn:xygengDef}, and  $\ConstFunctionX$ is an arbitrary function of a single variable.
In order for \eqref{eqn:genPsolution} to be analytic in $\tH$ and $\pH$ at $\tH=\pH=0$, we must set $\ConstFunctionX=0$, thus recovering \eqref{eqn:genPsolution}.
\end{proof}

For example, if we choose
$$
\xygeng(\xG) = \frac{\xG}{16\cosh^2\left(\frac{\xG}{2}\right)}\,,
$$
corresponding to
$$
\genf(\tG) = 
\frac{1}{8\sqrt{\tG}}\log\left(\frac{1 + \sqrt{\tG}}{1 - \sqrt{\tG}}\right)
=\frac{1}{4}\sum_{\kI=0}^\infty\frac{\tG^\kI}{2\kI+1}\,,
$$
we get, after expanding $\genP$ in a power series in $\tG,\pG$, an expression for the matrix elements of $\matPX$ in the form
\be\label{eqn:matPXspecial}
\matPX_{\nI\mI}=
\frac{1}{2}\int_0^1\SqP_{\nI-1}(\zG^2)\SqP_{\mI-1}(\zG^2)\zG^2 d\zG\,,
\ee
where $(\SqP_\nI)_{\nI=0}^\infty$ is a series of polynomials defined by
\bear
\sum_{n=0}^\infty\SqP_\nI(\zG)\tG^{\nI} &=&
\frac{
\tH\cosh^3\left(\frac{\tH}{2}\right)
}{
\sinh\left(\frac{\tH}{2}\right)
\cosh^2\left(\frac{\sqrt{\zG}\tH}{2}\right)
}
\nn\\
&=&
\frac{1}{\sqrt{\tG}}\left(\frac{1}{1-\tG}\right)
\frac{4\log\left(\frac{1+\sqrt{\tG}}{1-\sqrt{\tG}}\right)}{
\left(
\frac{1+\sqrt{\tG}}{1-\sqrt{\tG}}
\right)^{\sqrt{\zG}}
+\left(
\frac{1-\sqrt{\tG}}{1+\sqrt{\tG}}
\right)^{\sqrt{\zG}}
+2
}\,.
\nn\\
&&\label{eqn:SqPGenFun}
\eear
The first four polynomials in the series are
$$
\begin{array}{ll}
\SqP_0=2,
& 
\SqP_1=\tfrac{8}{3}-2\zG, \\ & \\
\SqP_2=\tfrac{46}{15}-4\zG +\tfrac{4}{3}\zG^2, & 
\SqP_3=\tfrac{352}{105} -\tfrac{88}{15}\zG +\tfrac{32}{9}\zG^2-\tfrac{34}{45}\zG^3,
\end{array}
$$
and $\matPX$ then takes the form
\be\label{eqn:matPXresult}
\matPX=
\begin{pmatrix}
\frac{2}{3} & \frac{22}{45} & \frac{26}{63} & \frac{5218}{14175} & \cdots \\
& & & & \\
\frac{22}{45} & \frac{382}{945} & \frac{1702}{4725} & \frac{4438}{
  13365} & \cdots \\
& & & & \\
\frac{26}{63} & \frac{1702}{4725} & \frac{17114}{51975} & \frac{65634094}{212837625}  & \cdots \\
& & & & \\
 \frac{5218}{14175} & \frac{4438}{13365} & \frac{65634094}{212837625} & \frac{1266926}{4343625}  & \cdots \\
& & & & \\
\vdots & \vdots & \vdots & \vdots & \ddots \\
\end{pmatrix}.
\ee
Since $\matAX$ is triangular, \eqref{eqn:PXAXhermitian} also holds for the finite truncated matrices, and indeed, with \eqref{eqn:matAXdef} and \eqref{eqn:matPXresult} we calculate
$$
(\matPX\matAX)+(\matPX\matAX)^\dagger = 
-\frac{1}{2}\begin{pmatrix}
\frac{1}{1\cdot1} & \frac{1}{1\cdot3} & \frac{1}{1\cdot5} & \frac{1}{1\cdot7}  & \cdots \\
& & & & \\
\frac{1}{3\cdot1} & \frac{1}{3\cdot3} & \frac{1}{3\cdot5} & \frac{1}{3\cdot7} & \cdots \\
& & & & \\
\frac{1}{5\cdot1} & \frac{1}{5\cdot3} & \frac{1}{5\cdot5} & \frac{1}{5\cdot7} & \cdots \\
& & & & \\
\frac{1}{7\cdot1} & \frac{1}{7\cdot3} & \frac{1}{7\cdot5} & \frac{1}{7\cdot7} & \cdots \\
& & & & \\
\vdots & \vdots & \vdots & \vdots & \ddots \\
\end{pmatrix},
$$
as expected.

While it seems at first sight that \eqref{eqn:matPXspecial} defines a positive definite hermitian matrix, as we had hoped for, we will see that for $\vecxd=\vecxd(\sA)$,
$$
\vecxd^\dagger\matPX\vecxd=0
$$
for $\sA$ that is a zero of the zeta function. In particular, we will show that
\be\label{eqn:sumSqPxd}
\sum_{\nI=0}^\infty\SqP_\nI(\zG)\xd_\nI=0,
\ee
possibly reminiscent of Salem's criterion \cite{Salem:1953}.

To see that \eqref{eqn:sumSqPxd} is plausible, we note that if we modify \eqref{eqn:genPsolution} to
\bear
\genPM(\tG,\pG) =
-\frac{
4\cosh^3\left(\frac{\tH}{2}\right)\cosh^3\left(\frac{\pH}{2}\right)
}{
\sinh\left(\frac{\tH}{2}\right)\sinh\left(\frac{\pH}{2}\right)
}\Bigl\lbrack
\int_0^1
\frac{\tH\xygeng(\zG\pH)\zG^{1+2\gM}d\zG}{\cosh^2\left(\frac{\zG\tH}{2}\right)}
+\int_0^1\frac{\pH\xygeng(\zG\tH)\zG^{1+2\gM}d\zG}{\cosh^2\left(\frac{\zG\pH}{2}\right)}
\Bigr\rbrack,
\nn\\
&&\label{eqn:genPsolutionModified}
\eear
for $\gM\in\R$,
then \eqref{eqn:PXAXhermitian} will be modified to
\be\label{eqn:PXAXhermitianModified}
(\matPXM\matAX)+(\matPXM\matAX)^\dagger-\frac{1}{2}\gM\matPXM = 
\veck\vecf^\dagger+\vecf\veck^\dagger,
\ee
and \eqref{eqn:HPwish} changes to
\be\label{eqn:HPwishModified}
\frac{1}{4}\left(\sA-\frac{1}{2}\right)=\frac{\vecxd^\dagger\matPXM\matAX\vecxd}{\vecxd^\dagger\matPXM\vecxd}
=\frac{1}{4}\gM+\frac{\vecxd^\dagger(\matPXM\matAX-\frac{1}{4}\gM\matPX-\veck\vecf^\dagger)\vecxd}{\vecxd^\dagger\matPXM\vecxd}
\in \frac{1}{4}\gM+i\R,
\ee
since $\matPXM\matAX-\frac{1}{4}\gM\matPX-\veck\vecf^\dagger$ is antihermitian by \eqref{eqn:PXAXhermitianModified}.
This would correspond to zeros off the critical line, at $\sA\in\frac{1}{2}+\gM+i\R$, while \eqref{eqn:matPXspecial} will be replaced by
\be\label{eqn:matPXspecialModified}
\matPXM_{\nI\mI}=
\frac{1}{2}\int_0^1\SqP_{\nI-1}(\zG^2)\SqP_{\mI-1}(\zG^2)\zG^{2+2\gM} d\zG\,,
\ee
still leading to nonzero $\vecxd^\dagger\matPXM\vecxd$, unless \eqref{eqn:sumSqPxd} holds.

It is not hard to check that
\be\label{eqn:SqP01}
\SqP_\nI(0)=2\sum_{\kI=0}^\nI\frac{1}{2\kI+1}
\,,\qquad
\SqP_\nI(1)=\frac{2}{2\nI+1}\,.
\ee
Thus \eqref{eqn:sumSqPxd} is true at $\zG=1$ and does not converge for $\zG=0$.

We can calculate \eqref{eqn:sumSqPxd} using \eqref{eqn:intgenFb} [with the definition of $\genFd$ in \eqref{eqn:genFdDef}] and \eqref{eqn:SqPGenFun}:
\bear
\sum_{\nI=0}^\infty\SqP_\nI(\zG)\xd_\nI &=&
\frac{1}{2\pi}\int_0^{2\pi}
\left\lbrack
\sum_{\nI=0}^\infty\SqP_\nI(\zG)e^{i\nI\theta}\right\rbrack
\left(
\sum_{\mI=0}^\infty \xd_\nI e^{-i\mI\theta}
\right)d\theta
\nn\\
&=&
-\frac{1}{2\pi}\int_{\frac{i\pi}{2}-\infty}^{\frac{i\pi}{2}+\infty}
\frac{\qG}{
\cosh^2\left(\frac{\sqrt{\zG}\qG}{2}\right)
}
\genFd\left\lbrack
\tanh^2\left(\frac{\qG}{2}\right)
\right\rbrack
d\qG
\nn\\
&&
\!\!\!\!\!\!\!\!\!\!\!\!\!\!\!\!\!\!
\!\!\!\!\!\!\!\!\!\!\!\!\!\!\!\!\!\!
=-\pi^{\sA-2}\sin\left(\frac{\pi\sA}{2}\right)
\int_{\frac{i\pi}{2}-\infty}^{\frac{i\pi}{2}+\infty}
\frac{\qG}{
\cosh^2\left(\frac{\sqrt{\zG}\qG}{2}\right)
}
\left\{
\int_0^\infty
\sin^2\left\lbrack\frac{(\qG -i\pi)x}{2\pi}\right\rbrack
\frac{x^{-\sA}dx}{\sinh x}
\right\}
d\qG
\nn\\
&&
\label{eqn:CalcSumSqPxd}
\eear
where we substituted, for $0<\theta<2\pi$,
$$
\qG\eqdef\log\left(
\frac{1+e^{\frac{1}{2}i\theta}}{1-e^{\frac{1}{2}i\theta}}
\right)
=\frac{i\pi}{2}+\log\cot\left(\frac{\theta}{4}\right).
$$
We deform the $\qG$ contour of integration to the real axis, and using the fact that $\qG/\cosh^2\left(\frac{\sqrt{\zG}\qG}{2}\right)$ is an odd function and 
$$
\frac{1}{2}\left\{\sin^2\left\lbrack\frac{(\qG -i\pi)x}{2\pi}\right\rbrack
-\sin^2\left\lbrack\frac{(-\qG -i\pi)x}{2\pi}\right\rbrack\right\}
=-\frac{1}{2}i\sin\left(\frac{\qG x}{\pi}\right)\sinh x,
$$
we get
\bear
\sum_{\nI=0}^\infty\SqP_\nI(\zG)\xd_\nI &=&
\frac{1}{2}i\pi^{\sA-2}\sin\left(\frac{\pi\sA}{2}\right)
\int_{-\infty}^\infty
\frac{\qG}{
\cosh^2\left(\frac{\sqrt{\zG}\qG}{2}\right)
}
\left\lbrack
\int_0^\infty
x^{-\sA}\sin\left(\frac{\qG x}{\pi}\right)dx
\right\rbrack
d\qG
\nn\\
&&
\label{eqn:CalcSumSqPxdB}
\eear
We evaluate \eqref{eqn:CalcSumSqPxdB} by changing the order or integration, expanding $\sin\left(\frac{\qG x}{\pi}\right)$ in a Taylor series, and first calculating
\bear
\int_{-\infty}^{\infty}
\frac{\qG\sin\left(\frac{\qG x}{\pi}\right)}{\cosh^2\left(\frac{\sqrt{\zG}\qG}{2}\right)}
d\qG
 &=&
\sum_{n=0}^\infty(-1)^n\frac{2^{2n+3}\zG^{-n-\frac{3}{2}}x^{2n+1}}{\pi^{2n+1}(2n+1)!}\int_{-\infty}^{\infty}
\frac{\qG^{2n+2}}{\cosh^2\qG}
d\qG
\nn\\
&=&
\pi\sum_{n=0}^\infty\frac{2^{2n+4}\zG^{-n-\frac{3}{2}}x^{2n+1}}{(2n+1)!}
\left(1-2^{-2n-1}\right)B_{2n+2}
\nn\\
&=&
\frac{4\pi}{\zG\sinh\left(\frac{x}{\sqrt{\zG}}\right)}\left\lbrack\frac{x}{\sqrt{\zG}}\coth\left(\frac{x}{\sqrt{\zG}}\right)-1\right\rbrack
\,.
\eear
Putting this result back into \eqref{eqn:CalcSumSqPxdB}, it remains to compute
\bear
\lefteqn{
\frac{4\pi}{\zG}
\int_0^\infty
\left\lbrack\frac{x}{\sqrt{\zG}}\coth\left(\frac{x}{\sqrt{\zG}}\right)-1\right\rbrack
\frac{x^{-\sA}dx}{\sinh\left(\frac{x}{\sqrt{\zG}}\right)}
}\nn\\
&=&
4\pi\zG^{-\frac{1+\sA}{2}}
\int_0^\infty
\left(x\coth x-1\right)
\frac{x^{-\sA}dx}{\sinh x}\,.
\nn\\
&&
\label{eqn:ThisVanishes}
\eear
From the comment below \eqref{eqn:SqP01}, we know that  \eqref{eqn:ThisVanishes} vanishes for $\zG=1$, and hence for all $\zG\neq 0$. We can also compute it directly using \eqref{eqn:intxsAsinh} with integration by parts and analytic continuation from $\Re\sA<0$ to $\Re\sA<1$:
\be\label{eqn:IntegralIsZero}
\int_0^\infty
\left(x\coth x-1\right)
\frac{x^{-\sA}dx}{\sinh x}
=
\frac{\sA\pi^{1-\sA}\eta(\sA)}{\sin\left(\frac{\pi\sA}{2}\right)}
\ee
which vanishes when $\eta(\sA)=0$.
Thus, \eqref{eqn:sumSqPxd} holds [with $\SqP_\nI$ defined by expanding \eqref{eqn:SqPGenFun}], and we have shown that if $\zeta(\sA)=0$ (and $\sA$ is not a negative even integer) then $\vecxd(\sA)^\dagger\matPXM\vecxd(\sA)=0$ for any solution $\matPXM=\matPXM^\dagger$ of \eqref{eqn:PXAXhermitianModified}.

\section{Discussion}
\label{sec:Discussion}

We have constructed a sequence of functionals on $\lBanach^p$ ($\infty>p>1$) that depend linearly on a parameter $\sA$ and whose kernels have a common nontrivial vector precisely when $\sA$ is a nontrivial zero of the Riemann zeta function. This statement can be viewed as a variant of the Berry-Keating observation about the role of the dilatation operator in the problem of the Riemann Hypothesis. It might be interesting to explore whether a simple matrix as \eqref{eqn:start} can also be constructed for Dirichlet or Artin $L$-functions, or automorphic $L$-functions, and whether the elements of the matrix correspond to any meaningful data of the automorphic representation. For example, the order of a zero at $\sA=1$ is easy to translate to a linear algebraic condition, similarly to \secref{sec:NonSimpleZeros}, and therefore if a condition in the spirit of \eqref{eqn:start} can be found for an $L$-function associated with an elliptic curve, it might be interesting to explore whether additional arithmetic data that appears in the Birch and Swinnerton-Dyer conjecture is also encoded in the matrix.

We have also explored whether it is possible to convert the matrix equation \eqref{eqn:start} to a hermitian spectral problem. This amounts to solving a partial differential equation, and we presented the general solution in \eqref{eqn:genPsolution}. We showed that the general solution does not actually yield a hermitian spectral problem because the change-of-basis matrix has a nontrivial kernel, but in the process we found additional constant (i.e., $\sA$-independent) functionals that annihilate an $\lBanach^p$ solution of \eqref{eqn:start}. These additional functionals are given by substituting any $0<\zG<1$ into \eqref{eqn:sumSqPxd}, where the polynomials $\SqP_\nI(\zG)$ are defined by their generating function \eqref{eqn:SqPGenFun}. Alternatively, we can take the $i^{th}$ additional functional to be the sequence of $i^{th}$ derivatives $\SqP_\nI^{(i)}(1)$ at $\zG=1$. It might be interesting to explore how to properly truncate the infinite matrix in \eqref{eqn:start} (or appropriate linear combinations of the rows), perhaps adding linear combinations of the additional functionals, to get a finite matrix whose determinant has solutions $\sZ$ that approximate the actual zeros of $\zeta(\sZ)$.

\bibliographystyle{amsplain}


\end{document}